\newtheorem{thm}{Theorem}
\newtheorem{lem}[thm]{Lemma}
\newtheorem{prop}[thm]{Proposition}
\theoremstyle{definition}
\newtheorem{rmk}[thm]{Remark}
\numberwithin{equation}{section}
\numberwithin{thm}{section}
\newcommand{\A}{\mathcal{A}}
\newcommand{\C}{\mathcal{C}}
\newcommand{\Cb}{\mathbb{C}}
\newcommand{\F}{\mathcal{F}}
\newcommand{\K}{\mathcal{K}}
\renewcommand{\P}{\mathcal{P}}
\newcommand{\R}{\mathbb{R}}
\renewcommand{\S}{\mathcal{S}}
\newcommand{\T}{\mathcal{T}}
\newcommand{\U}{\mathcal{U}}
\newcommand{\Z}{\mathbb{Z}}
\newcommand{\p}{\partial}
\renewcommand{\epsilon}{\varepsilon}
\newcommand{\dx}{\: \mathrm{d}}
\renewcommand{\u}{\mathbf{u}}
\newcommand{\uf}{\mathfrak{u}}
\renewcommand{\v}{\mathbf{v}}
\newcommand{\w}{\mathbf{w}}
\renewcommand{\Re}{\mathrm{Re}}
\newcommand{\ie}{\textit{i.e.}}
\newcommand{\nm}{\noalign{\smallskip}}
\newcommand{\ds}{\displaystyle}
\newcommand{\iu}{\mathrm{i}\mkern1mu}
\newcommand{\svdots}{\raisebox{3pt}{\scalebox{.6}{$\vdots$}}}
\newcommand{\sddots}{\raisebox{3pt}{\scalebox{.6}{$\ddots$}}}
\newcommand{\sadots}{\raisebox{3pt}{\scalebox{.6}{$\adots$}}}
\newcommand{\neutralize}[1]{\expandafter\let\csname c@#1\endcsname\count@}
\title{Edge modes in active systems of subwavelength resonators}
\author{
	Habib Ammari\thanks{\footnotesize Department of Mathematics, 
		ETH Z\"urich, 
		R\"amistrasse 101, CH-8092 Z\"urich, Switzerland (habib.ammari@math.ethz.ch, erik.orvehed.hiltunen@sam.math.ethz.ch).}\and Erik Orvehed Hiltunen\footnotemark[1] }
\date{}
\begin{document}
	\maketitle
	
	\begin{abstract}
		Wave scattering structures with amplification and dissipation can be modelled by non-Hermitian systems, opening new ways to control waves at small length scales. In this work, we study the phenomenon of topologically protected edge states in acoustic systems with gain and loss. We demonstrate that localized edge modes appear in a periodic structure of subwavelength resonators with a defect in the gain/loss distribution, and explicitly compute the corresponding frequency and decay length. Similarly to the Hermitian case, these edge modes can be attributed to the winding of the eigenmodes. In the non-Hermitian case, the topological invariants fail to be quantized, but can nevertheless predict the existence of localized edge modes.
	\end{abstract}
\vspace{0.5cm}
	\noindent{\textbf{Mathematics Subject Classification (MSC2000):} 35J05, 35C20, 35P20.
		
\vspace{0.2cm}

	\noindent{\textbf{Keywords:}} subwavelength resonance, non-Hermitian topological systems, PT symmetry, protected edge states, exceptional points.
\vspace{0.5cm}	
%
	
	\section{Introduction}	
	In classical wave systems, sources of amplification and dissipation can be modelled by non-real material parameters. Consequently, the underlying system is non-Hermitian, meaning that the left and right eigenmodes are distinct. This opens the possibility of \emph{exceptional points}, which are parameter values such that the left and right eigenmodes are orthogonal, or equivalently, such that the system is not diagonalizable. Around such points, rich physical phenomena have been observed, including enhanced sensing and unidirectional invisibility (see, for example, \cite{heiss2012physics,miri2019exceptional} for overviews of physical properties around exceptional points). A particular case of non-Hermitian systems are systems with \emph{parity-time symmetry}, or \emph{$\P\T$ symmetry}. The spectrum of a $\P\T$-symmetric system is conjugate-symmetric, and is therefore either real (known as \emph{unbroken} $\P\T$ symmetry) or non-real and symmetric around the real axis (known as \emph{broken} $\P\T$ symmetry).
	
	In the study of topologically protected edge modes, eigenvalue degeneracies are lifted to open band gaps. In the well-known \emph{Su-Schrieffer-Heeger model} \cite{SSH}, a certain parameter choice corresponds to a conical degeneracy known as a \emph{Dirac cone}. As the parameter varies, the degeneracy can open into two topologically distinct band gaps. In the Hermitian case, the topological properties of one-dimensional insulators can be described by the \emph{Zak phase}. This is a geometrical phase which describes the winding of the eigenmodes as the wave vector is varying. The \emph{bulk-boundary correspondence} states that, by combining materials with different Zak phases, the total structure will support modes that are confined to the interface between the two materials. These modes as known as \emph{edge modes} \cite{drouot1, fefferman,fefferman_mms}.
	
	In the non-Hermitian case, the exceptional point degeneracies can open into non-trivial band gaps enabling topologically protected non-Hermitian edge modes. Initially, such modes were created by adding gain and loss to structures which already in the Hermitian case support edge modes, which enables selective enhancement of the edge modes \cite{schomerus2013topologically,poli2015selective,weimann2017topologically,parto2018edge}. Later, it was discovered that pure non-Hermitian edge modes can be created, \ie{}, edge modes that originate purely from the gain/loss distribution and cease to exist in the Hermitian limit \cite{takata2018photonic,leykam2017edge}. Moreover, non-Hermitian effects can be introduced by having anisotropic couplings. This can give rise to  the \emph{skin effect}, where bulk modes are localized to the edges of the structure, and has been used for efficient funnelling of waves \cite{weidemann2020topological}. The two different origins of the edge modes have been explained by two different topological winding numbers: the (non-Hermitian) Zak phase and the \emph{vorticity}, where the latter describes the winding of the complex eigenvalues \cite{midya2018non,ghatak2019new,leykam2017edge,ni2018pt,yuce2018pt,lang2018effects,okuma2020topological,yao2018edge}.
	
	Protected edge modes in non-Hermitian systems have typically been studied for tight-binding Hamiltonians with \emph{chiral} symmetry. This enables a natural generalization of the Zak phase which is quantized \cite{yin2018geometrical}. In the case without chiral symmetry, a similar approach can be made, but results in a continuously varying Zak phase \cite{jiang2018topological,lieu2018topological}. Instead, the ``total Zak phase'' can be considered, \ie{}, the sum of the Zak phases for each band, which is indeed quantized \cite{jiang2018topological,liang2013topological}. In this work, however, we demonstrate the existence of edge modes even in structures whose total Zak phase vanishes. Instead, these modes can be attributed to a non-zero (individual) Zak phase, and the continuously varying Zak phase can be interpreted as a ``partial'' band inversion. By combining two materials whose Zak phases have opposite sign, edge modes appear along the interface.
	
	Non-Hermitian material parameters provide a way to have protected edge modes in crystals where the periodic geometry is intact, and a defect is placed in the parameters. Due to the quantized Zak phase, this is not possible in the non-Hermitian case \cite{ammari2019topologically}. In this work, we study an array consisting of dimers of subwavelength resonators, with periodic geometry and a general configuration of the bulk modulus. We begin by studying the periodic case, and compute the vorticity and the Zak phase. We then introduce a defect in the bulk modulus, and explicitly compute the frequency of localized modes in the subwavelength regime. The modes created this way originate purely from the non-Hermitian gain and loss. In addition, we demonstrate numerically the edge modes in a non-Hermitian analogue of the system studied in \cite{ammari2019topologically}, where edge modes exist even in the Hermitian case.
	
	\section{Problem statement and preliminaries} \label{sec:prelim}
	In this section, we define the structure under consideration, and discuss some preliminary theory needed for the analysis.
		
	\subsection{Problem statement}
	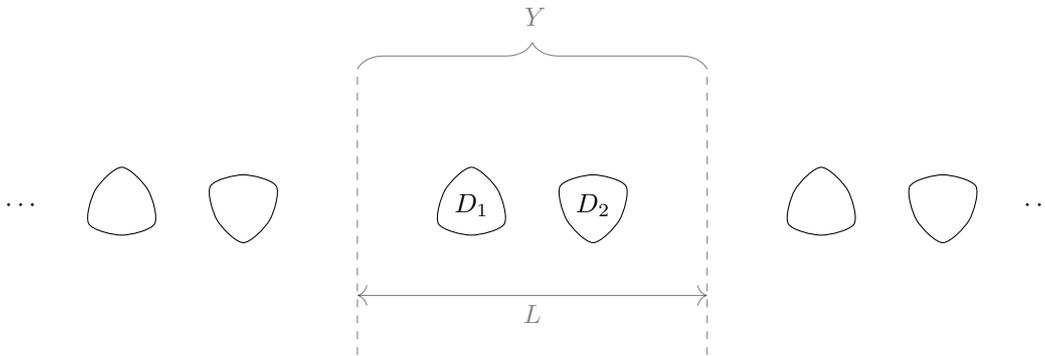
\begin{figure}[tbh]
		\centering
		\begin{tikzpicture}[scale=2]
		\pgfmathsetmacro{\rb}{0.25pt}
		\pgfmathsetmacro{\rs}{0.2pt}
		\coordinate (a) at (0.25,0);		
		\coordinate (b) at (1.05,0);	
		
		\draw[dashed, opacity=0.5] (-0.5,0.85) -- (-0.5,-1);
		\draw[dashed, opacity=0.5]  (1.8,0.85) -- (1.8,-1)node[yshift=4pt,xshift=-7pt]{};
		\draw[{<[scale=1.5]}-{>[scale=1.5]}, opacity=0.5] (-0.5,-0.6) -- (1.8,-0.6)  node[pos=0.5, yshift=-7pt,]{$L$};
		\draw plot [smooth cycle] coordinates {($(a)+(210:\rb)$) ($(a)+(270:\rs)$) ($(a)+(330:\rb)$) ($(a)+(30:\rs)$) ($(a)+(90:\rb)$) ($(a)+(150:\rs)$) }; \draw (a) node{$D_1$};
		\draw plot [smooth cycle] coordinates {($(b)+(30:\rb)$) ($(b)+(90:\rs)$) ($(b)+(150:\rb)$) ($(b)+(210:\rs)$) ($(b)+(270:\rb)$) ($(b)+(330:\rs)$) }; \draw (b) node{$D_2$};
		
		
		\begin{scope}[xshift=-2.3cm]
		\coordinate (a) at (0.25,0);		
		\coordinate (b) at (1.05,0);	
		\draw plot [smooth cycle] coordinates {($(a)+(210:\rb)$) ($(a)+(270:\rs)$) ($(a)+(330:\rb)$) ($(a)+(30:\rs)$) ($(a)+(90:\rb)$) ($(a)+(150:\rs)$) };
		\draw plot [smooth cycle] coordinates {($(b)+(30:\rb)$) ($(b)+(90:\rs)$) ($(b)+(150:\rb)$) ($(b)+(210:\rs)$) ($(b)+(270:\rb)$) ($(b)+(330:\rs)$) };
		\begin{scope}[xshift = 1.2cm]
		\draw (-1.6,0) node{$\cdots$};
		\end{scope};
		\end{scope}
		\begin{scope}[xshift=2.3cm]
		\coordinate (a) at (0.25,0);		
		\coordinate (b) at (1.05,0);	
		\draw plot [smooth cycle] coordinates {($(a)+(210:\rb)$) ($(a)+(270:\rs)$) ($(a)+(330:\rb)$) ($(a)+(30:\rs)$) ($(a)+(90:\rb)$) ($(a)+(150:\rs)$) };
		\draw plot [smooth cycle] coordinates {($(b)+(30:\rb)$) ($(b)+(90:\rs)$) ($(b)+(150:\rb)$) ($(b)+(210:\rs)$) ($(b)+(270:\rb)$) ($(b)+(330:\rs)$) };
		\begin{scope}[xshift = 1.1cm]
		\end{scope}
		\draw (1.7,0) node{$\cdots$};
		\end{scope}
		
		\begin{scope}[yshift=0.9cm]
		\draw [decorate,opacity=0.5,decoration={brace,amplitude=10pt}]
		(-0.5,0) -- (1.8,0) node [black,midway]{};
		\node[opacity=0.5] at (0.67,0.35) {$Y$};	
		\end{scope}
		\end{tikzpicture}
		\caption{Example of the array, drawn to illustrate the symmetry assumptions.} \label{fig:SSH} 
	\end{figure}

	We first describe the geometry of the structure, depicted in \Cref{fig:SSH}. We will consider a three-dimensional geometry which is periodic in one dimension.  Let $Y=[-L/2,L/2]\times \R^2$ be the unit cell, with half-cells $Y_1 = [-L/2,0]\times \R^2$ and $Y_2 = [0,L/2]\times \R^2$. For $j=1,2$, we assume that $Y_j$ contains a resonator $D_j$ such that $\p D_j$ is of Hölder class $C^{1,s}$ for some $0<s<1$. We denote a pair of resonators, a so-called dimer, by $D = D_1 \cup D_2$. We assume that the dimer is \emph{parity symmetric}, that is,
	\begin{equation} \label{eq:symmetry}
	\P D = D,
	\end{equation}
	where $\P$ is the parity operator $\P: \R^3 \rightarrow \R^3, \P(x) = -x$.
	
	The geometry under consideration is periodic in the direction specified by $\w := (1,0,0)$.	We define the translated resonators $D_i^m, i=1,2, m\in \Z,$ by $D_i^m = D_i + mL\w$ and the translated dimers by $D^m = D_1^m \cup D_2^m$. The total crystal $\C$ is given by
	$$\C = \bigcup_{m\in\Z} D^m.$$
	As found in \cite{ammari2020exceptional}, any imaginary part of the densities inside the resonators will not affect the resonant frequencies to leading order. Therefore, we assume that all the resonators have equal densities $\rho_b \in \R$, and are with different bulk modulus $\kappa_i^m \in \mathbb{C}$, where the imaginary part of $\kappa_i^m$ corresponds to the gain or loss inside the resonator. We define the parameters
	$$v_i^m = \sqrt{\frac{\kappa_i^m}{\rho_b}}, \quad v = \sqrt{\frac{\kappa}{\rho}}, \quad \delta = \frac{\rho_b}{\rho}, \quad k = \frac{\omega}{v}, \quad k_i^m = \frac{\omega}{v_i^m}.$$	
	We model acoustic wave propagation inside the structure by the Helmholtz problem
	\begin{equation} \label{eq:scattering}
	\left\{
	\begin{array} {ll}
	\ds \Delta {u}+ k^2 {u}  = 0 & \text{in } \R^3 \setminus \C, \\
	\nm
	\ds \Delta {u}+ (k_i^m)^2 {u}  = 0 & \text{in } D_i^m, \\
	\nm
	\ds  {u}|_{+} -{u}|_{-}  =0  & \text{on } \p \C, \\
	\nm
	\ds  \delta \frac{\partial {u}}{\partial \nu} \bigg|_{+} - \frac{\partial {u}}{\partial \nu} \bigg|_{-} =0 & \text{on } \p \C, \\
	\nm
	\ds u(x_1,x_2,x_3) & \text{satisfies the outgoing radiation condition as } \sqrt{x_2^2+x_3^2} \rightarrow \infty.
	\end{array}
	\right.
	\end{equation}
	In order to have resonant frequencies in the subwavelength regime, we will study the case of a high contrast in the density, corresponding to 
	$$\delta \ll 1.$$
	In the limit  $\delta \rightarrow 0$, we say a frequency $\omega$ (or corresponding eigenmode) is \emph{subwavelength} if $\omega$ scales as $O(\delta^{1/2})$. ´
	
	The geometry described by $\C$ is periodic, but due to the different values of $\kappa_i^m$ the differential problem \eqref{eq:scattering} is in general not periodic. In the following, we will study different realisations of \eqref{eq:scattering}. In \Cref{sec:periodic}, we study the periodic case, \ie{} when $\kappa_i^m$ does not depend on $m$. In \Cref{sec:first}, we study a case when an ``edge'' is introduced, giving localized edge modes. For completeness, in \Cref{sec:geomd} we numerically demonstrate the edge modes in a system with a defect in the geometry.
	
	\subsection{Layer potential theory} \label{sec:layerpot}
	We denote the (outgoing) Helmholtz Green's functions by $G^k$, defined by
	$$
	G^k(x,y) := -\frac{e^{\iu k|x-y|}}{4\pi|x-y|}, \quad x,y \in \R^3, x\neq y, k\in \mathbb{C}.
	$$
	Let $D\in \R^3$ be a bounded, multiply connected domain with $N$ simply connected components $D_i$. Further, suppose that there exists some $0<s<1$ so that $\p D_i$ is of Hölder class $C^{1,s}$ for each $i=1,\ldots,N$. 
	
	We introduce the single layer potential $\S_{D}^k: L^2(\partial D) \rightarrow H_{\textrm{loc}}^1(\R^3)$, defined by
	\begin{equation*}
	\S_D^k[\phi](x) := \int_{\partial D} G^k(x,y)\phi(y) \dx \sigma(y), \quad x \in \R^3.
	\end{equation*}
	Here, the space $H_{\textrm{loc}}^1(\R^3)$ consists of functions that are square integrable and with a square integrable weak first derivative, on every compact subset of $\R^3$. Taking the trace on $\p D$, it is well-known that $\S_D^0: L^2(\p D) \rightarrow H^1(\p D)$ is invertible.
	
	We also define the Neumann-Poincar\'e operator $\K_D^{k,*}: L^2(\partial D) \rightarrow L^2(\partial D)$ by
	\begin{equation*}
	\K_D^{k,*}[\phi](x) := \int_{\partial D} \frac{\partial }{\partial \nu_x}G^k(x,y) \phi(y) \dx \sigma(y), \quad x \in \partial D,
	\end{equation*}
	where $\partial/\partial \nu_x$ denotes the outward normal derivative at $x\in\p D$.
	
	The following relations, often known as \emph{jump relations}, describe the behaviour of $\S_D^k$ on the boundary $\partial D$ (see, for example, \cite{MaCMiPaP}):
	\begin{equation}
	\S_D^k[\phi]\big|_+ = \S_D^k[\phi]\big|_-,
	\end{equation}
	and
	\begin{equation}
	\frac{\partial }{\partial \nu}\S_D^k[\phi]\Big|_{\pm}  =  \left(\pm\frac{1}{2} I + \K_D^{k,*}\right) [\phi],
	\end{equation}
	where  $I$ is the identity operator and $|_\pm$ denote the limits from outside and inside $D$.

	

	\subsection{Floquet-Bloch theory and quasiperiodic layer potentials}\label{sec:floquet}
	A function $f(x)\in L^2(\R)$ is said to be $\alpha$-quasiperiodic if $e^{-\iu \alpha x}f(x)$ is a periodic function of $x$. If the periodicity is $L>0$, the quasiperiodicity $\alpha$ is defined modulo $\tfrac{2\pi}{L}$. Therefore, we define the \textit{first Brillouin zone} $Y^*$ as the torus $Y^*:= \R / \tfrac{2\pi}{L} \Z \simeq (-\pi/L, \pi/L]$. Given a function $f\in L^2(\R)$, the Floquet transform is defined as
	\begin{equation}\label{eq:floquet}
	\F[f](x,\alpha) := \sum_{m\in \Z} f(x-mL) e^{\iu \alpha mL}.
	\end{equation}
	$\F[f]$ is always $\alpha$-quasiperiodic in $x$ and periodic in $\alpha$. Let $Y_0 = [-L/2,L/2)$ be the one-dimensional unit cell. The Floquet transform is an invertible map $\F:L^2(\R) \rightarrow L^2(Y_0\times Y^*)$. The inverse is given by (see, for instance, \cite{MaCMiPaP,kuchment})
	\begin{equation*}
	\F^{-1}[g](x) = \frac{L}{2\pi}\int_{Y^*} g(x,\alpha) \dx \alpha, \quad x\in \R.
	\end{equation*}
	
	We define the quasiperiodic Green's function $G^{\alpha,k}(x,y)$ as the Floquet transform of $G^k(x,y)$ along the direction specified by $\w$, \ie{},
	$$G^{\alpha,k}(x,y) := -\sum_{m \in \Z} \frac{e^{\iu k|x-y-mL\w|}}{4\pi|x-y-mL\w|}e^{\iu \alpha mL}.$$	
	Analogously to \Cref{sec:layerpot}, we define the quasiperiodic single layer potential $\mathcal{S}_D^{\alpha,k}$ by
	$$\mathcal{S}_D^{\alpha,k}[\phi](x) := \int_{\partial D} G^{\alpha,k} (x,y) \phi(y) \dx\sigma(y),\quad x\in \mathbb{R}^3.$$
	It is known that $\mathcal{S}_D^{\alpha,0} : L^2(\p D) \rightarrow H^1(\p D)$ is invertible if $\alpha \neq  0$ \cite{MaCMiPaP}, and for low frequencies we have
	\begin{equation}\label{eq:Sexp}
	\mathcal{S}_D^{\alpha,k} = \mathcal{S}_D^{\alpha,0} + O(k^2).
	\end{equation}
	Moreover, on the boundary $\p D$, $\S_D^{\alpha,k}$ satisfies the jump relations
	\begin{equation} \label{eq:jump1}
	\S_D^{\alpha,k}[\phi]\big|_+ = \S_D^{\alpha,k}[\phi]\big|_-,
	\end{equation}
	and
	\begin{equation} \label{eq:jump2}
	\frac{\p}{\p\nu} \S_D^{\alpha,k}[\phi] \Big|_{\pm} = \left( \pm \frac{1}{2} I +( \mathcal{K}_D^{-\alpha,k} )^*\right)[\phi],
	\end{equation}
	where $(\mathcal{K}_D^{-\alpha,k})^*$ is the quasiperiodic Neumann-Poincaré operator, given by
	$$ (\mathcal{K}_D^{-\alpha, k} )^*[\phi](x):= \int_{\p D} \frac{\p}{\p\nu_x} G^{\alpha,k}(x,y) \phi(y) \dx\sigma(y).$$
	
	
	\begin{rmk} \label{rmk:dim}
		To simplify the presentation, we have only defined the three-dimensional layer potentials and will perform the analysis in three spatial dimensions. However, we can analogously define the two-dimensional layer potentials \cite{MaCMiPaP}. Doing so, the analysis of \Cref{sec:periodic,sec:first} directly extend to the two-dimensional case, yielding the same conclusions.
	\end{rmk}
	
	\section{Periodic problem} \label{sec:periodic}
	In this section, we study the periodic problem, \ie{} when 
	\begin{equation*}
	\kappa_1^m = \kappa_1, \qquad \kappa_2^m = \kappa_2,
	\end{equation*}
	for all $m \in \Z$, for some bulk moduli $\kappa_i \in \mathbb{C}, \ i=1,2$. We also assume that $\Re(\kappa_1) = \Re(\kappa_2)$.
	Taking the Floquet transform of \eqref{eq:scattering}, we have
	\begin{equation} \label{eq:scattering_quasip}
	\left\{
	\begin{array} {ll}
	\ds \Delta {u^\alpha}+ k^2 {u^\alpha}  = 0 & \text{ in } Y \setminus D, \\
	\nm
	\ds \Delta {u^\alpha}+ k_i^2u^\alpha  = 0 & \text{ in } D_i, \\
	\nm
	\ds  {u^\alpha}|_{+} -{u^\alpha}|_{-}  =0  & \text{ on } \p D,\\
	\nm
	\ds  \delta \frac{\partial {u^\alpha}}{\partial \nu} \bigg|_{+} - \frac{\partial {u^\alpha}}{\partial \nu} \bigg|_{-} =0 & \text{ on } \p D,   \\
	\nm
	\ds u^\alpha(x+mL\w)= e^{\iu \alpha m} u^\alpha(x) & \text{ for all } m \in \Z, \\
	\nm
	\ds u^\alpha(x_1,x_2,x_3)& \text{ satisfies the $\alpha$-quasiperiodic outgoing radiation condition} \\ &\hspace{0.5cm} \text{as } \sqrt{x_2^2+x_3^2} \rightarrow \infty,
	\end{array}
	\right.
	\end{equation}
	where $u^\alpha(x) = \F[u](x,\alpha)$. The frequencies $\omega$ in the spectrum of \eqref{eq:scattering_quasip} are called \emph{quasiperiodic resonant frequencies}, and we say that the corresponding solution $u^\alpha$ is a (right) \emph{Bloch eigenmode}. Moreover, $\overline{\omega}$ will be in the spectrum of the system corresponding to
	\begin{equation*}
	\kappa_1^m = \overline{\kappa_1}, \qquad \kappa_2^m = \overline{\kappa_2},
	\end{equation*}
	and we say that the corresponding solution $v^\alpha$ is a \emph{left Bloch eigenmode}.
	
	We will refer to the case $\kappa_1, \kappa_2 \in \R$ as the \emph{Hermitian} case, and otherwise as the \emph{non-Hermitian}  case. We emphasise, however, that \eqref{eq:scattering_quasip} can be viewed as the spectral problem for an operator which, even in the case $\kappa_1, \kappa_2 \in \R$, is not self-adjoint (due to the radiation condition). The motivation for this terminology is that we will be able, using the \emph{capacitance matrix} formulation, to approximate the continuous spectral problem with a discrete eigenvalue problem which is Hermitian precisely in the case $\kappa_1, \kappa_2 \in \R$.
	
	\subsection{Complex band structure}
	In the periodic case, the spectrum $\sigma$ of \eqref{eq:scattering} can be decomposed into band functions $\omega_n^\alpha$, which are functions of  $\alpha \in Y^*, n=1,2,...$, by taking the Floquet transform:
	$$\sigma = \bigcup_{n=1}^\infty \bigcup_{\alpha\in Y^*} \omega_n^\alpha.$$	
	Since the material parameters are complex, the band functions $\omega_n^\alpha$ will in general be complex. Nevertheless, we define band gaps and degeneracies analogously to the case of real band function. Following \cite{shen2018topological}, we say that a band $\omega_n^\alpha$ is \emph{separable} if $\omega_n^\alpha \neq \omega_m^\alpha$ for all $m\neq n$ and all $\alpha \in Y^*$, and otherwise \emph{degenerate}. In this setting, a \emph{band gap} is a connected component of $\mathbb{C} \setminus \sigma$.
	
	Similarly to the previous works \cite{ammari2020exceptional,ammari2018honeycomb,ammari2019topologically}, the band structure and the eigenmodes can be approximated using a capacitance matrix formulation. Therefore, we let $V_j^\alpha$ be the solution to
	\begin{equation} \label{eq:V_quasi}
	\begin{cases}
	\ds \Delta V_j^\alpha =0 \quad &\mbox{in } \quad Y\setminus D,\\
	\ds V_j^\alpha = \delta_{ij} \quad &\mbox{on } \quad \partial D_i,\\
	\ds V_j^\alpha(x+mL\w)= e^{\iu \alpha m} V_j^\alpha(x) & \text{for all } m \in \Z, \\
	\ds V_j^\alpha(x_1,x_2,x_3) = O\left(\tfrac{1}{\sqrt{x_2^2+x_3^2}}\right) \quad &\text{as } \sqrt{x_2^2+x_2^2}\to\infty, \text{ uniformly in } x_1,
	\end{cases}
	\end{equation}
	where $\delta_{ij}$ is the Kronecker delta.
	We define the quasiperiodic capacitance coefficients $C_{ij}^\alpha$, for $i,j=1,2$, by
	\begin{equation} \label{eq:qp_capacitance} 
	C_{ij}^\alpha := \int_{Y\setminus D}\overline{\nabla V_i^\alpha}\cdot\nabla V_j^\alpha  \dx x,\quad i,j=1, 2,
	\end{equation}
	and then introduce the weighted quasiperiodic capacitance matrix $C^{v,\alpha}$ as
	\begin{equation}
	C^{v,\alpha}=\frac{1}{\rho}\begin{pmatrix}
	\kappa_1 C_{11}^\alpha & \kappa_1 C_{12}^\alpha \\[0.3em]
	\kappa_2 C_{21}^\alpha & \kappa_2 C_{22}^\alpha
	\end{pmatrix}.
	\end{equation}
	Observe that $\kappa_i/\rho = O(\delta)$, so the eigenvalues of $C^{v,\alpha}$ scale as $O(\delta)$. The following theorem was proved in \cite{ammari2020exceptional}.
	\begin{thm} \label{thm:res_quasi}
		As $\delta \rightarrow 0$, the quasiperiodic resonant frequencies satisfy the asymptotic formula
		$$\omega_i^\alpha = \sqrt{\frac{\lambda_i^\alpha}{|D_1|}} + O(\delta), \quad i = 1,2,$$
		where $|D_1|$ is the volume of a single resonator. Here, $\lambda_i^\alpha$ are the eigenvalues of the weighted quasiperiodic capacitance matrix $C^{v,\alpha}$.
	\end{thm}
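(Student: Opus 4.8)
The plan is to recast \eqref{eq:scattering_quasip} as a boundary integral equation whose characteristic values are the resonances, and then read off their leading-order behaviour as $\delta\to0$. I would represent the Bloch eigenmode as
\begin{equation*}
u^\alpha =
\begin{cases}
\S_D^{\alpha,k}[\psi] & \text{in } Y\setminus D, \\
\S_{D_i}^{k_i}[\phi_i] & \text{in } D_i,
\end{cases}
\end{equation*}
which solves the Helmholtz equations in $D_i$ and in $Y\setminus D$ together with the $\alpha$-quasiperiodic radiation condition for any densities $\phi_i\in L^2(\p D_i)$ and $\psi\in L^2(\p D)$. Imposing the continuity and flux conditions on $\p D$ and using the jump relations \eqref{eq:jump1}--\eqref{eq:jump2} converts the problem into a single operator equation for $(\psi,\phi_1,\phi_2)$; a nontrivial Bloch mode exists exactly when this operator has nontrivial kernel, so the quasiperiodic resonant frequencies are its characteristic values.

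The key point is that in the subwavelength regime $k=\omega/v$ and $k_i=\omega/v_i$ are both of order $\delta^{1/2}$, so the expansion \eqref{eq:Sexp}, together with the analogous expansions of $\S_{D_i}^{k_i}$, $\K_{D_i}^{k_i,*}$ and $(\K_D^{-\alpha,k})^*$, lets me replace every operator by its static ($k=0$) counterpart up to an $O(\delta)$ error. At leading order the interior field is constant on each resonator; write $x_i:=u^\alpha|_{D_i}$. The exterior field is then the $\alpha$-quasiperiodic harmonic function with boundary values $x_1,x_2$ and the prescribed transverse decay, i.e. $u^\alpha\approx x_1 V_1^\alpha + x_2 V_2^\alpha$ in $Y\setminus D$, with $V_j^\alpha$ the potentials of \eqref{eq:V_quasi}.

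The discrete eigenvalue problem emerges from integrating the flux condition $\delta\,\p_\nu u^\alpha|_+ = \p_\nu u^\alpha|_-$ over each $\p D_i$. On the interior side the divergence theorem gives
\begin{equation*}
\int_{\p D_i}\frac{\p u^\alpha}{\p\nu}\Big|_- \dx\sigma
= \int_{D_i}\Delta u^\alpha \dx x
= -k_i^2\int_{D_i} u^\alpha \dx x,
\end{equation*}
which at leading order equals $-k_i^2|D_i|\,x_i$. On the exterior side, rewriting \eqref{eq:qp_capacitance} via the divergence theorem and cancelling the lateral boundary terms by quasiperiodicity yields $\int_{\p D_i}\p_\nu u^\alpha|_+ \dx\sigma \approx -\sum_{j} C_{ij}^\alpha x_j$. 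Substituting both, using $\delta=\rho_b/\rho$ and $k_i^2=\omega^2\rho_b/\kappa_i$, cancelling the common factor $\rho_b$ and multiplying the $i$th equation by $\kappa_i$, gives
\begin{equation*}
\frac{\kappa_i}{\rho}\sum_{j=1}^{2} C_{ij}^\alpha x_j = \omega^2 |D_i|\, x_i,
\end{equation*}
which is precisely $C^{v,\alpha} x = \omega^2 |D_1| x$ once we use $|D_1|=|D_2|$, a consequence of the parity symmetry \eqref{eq:symmetry}. Thus $\omega^2|D_1|$ must be an eigenvalue $\lambda_i^\alpha$ of $C^{v,\alpha}$, giving $\omega_i^\alpha=\sqrt{\lambda_i^\alpha/|D_1|}$ at leading order.

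The main obstacle is upgrading these leading-order identities to the claimed $O(\delta)$ remainder rather than a merely formal one. I would do this by viewing the full boundary operator as a holomorphic perturbation (in $\omega$) of its static limit and applying Gohberg--Sigal theory, in the form of the generalised Rouché theorem, to confine its characteristic values within $O(\delta)$ of those of the limiting operator, the latter being exactly the capacitance eigenvalues. This step relies on the invertibility of $\S_D^{\alpha,0}$ for $\alpha\neq0$ and requires a separate argument at $\alpha=0$; establishing uniformity of the error in $\alpha$, and the non-degeneracy that cleanly separates the two branches $i=1,2$, are the most delicate technical points.
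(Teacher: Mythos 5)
The paper does not contain its own proof of this theorem---it is quoted from \cite{ammari2020exceptional}---but your argument is precisely the strategy used there: represent the Bloch mode by $\S_D^{\alpha,k}$ outside and $\S_{D_i}^{k_i}$ inside, reduce to a boundary integral equation, expand in the subwavelength regime so the interior field is constant on each resonator, integrate the flux condition to obtain $C^{v,\alpha}x=\omega^2|D_1|x$, and control the remainder via Gohberg--Sigal theory. Your proposal is correct and takes essentially the same route, including the correct identification of the delicate points (the case $\alpha=0$ and uniformity in $\alpha$).
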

	Observe that $|D_1|=|D_2|$ due to the $\P$-symmetry of the dimer $D$. The eigenvalues $\lambda_i^\alpha$ of $C^{v,\alpha}$ are given by
	$$\lambda_{j}^\alpha = \frac{1}{\rho}\left(C_{11}^\alpha\frac{\kappa_1+\kappa_2}{2} + (-1)^j \sqrt{\left(\frac{\kappa_1-\kappa_2}{2}\right)^2(C_{11}^\alpha)^2 + \kappa_1\kappa_2|C_{12}^\alpha|^2} \right).$$
	For a degeneracy to occur for small $\delta$, we need $\lambda_1^\alpha = \lambda_2^\alpha$ at some $\alpha \in Y^*$. It is straightforward to verify that this occurs precisely when $\kappa_1 = \overline{\kappa_2} := \kappa$ and $|\kappa| \geq \kappa_0$ for some $\kappa_0$ that depends on the geometry. These parameter values, \ie{} balanced and large enough gain/loss, were found in \cite{ammari2020exceptional} to correspond to an exceptional point. When the gain and loss are not balanced, \ie{} $\kappa_1 \neq \overline{\kappa_2}$, this degeneracy is lifted and the two bands are separable.
	
	In the case when the first and second bands are separable, we define the \emph{vorticity}, $\nu$, as 
	$$\nu = \frac{1}{2\pi}\int_{Y^*} \frac{\p}{\p \alpha} \arg\left(\omega_2^\alpha - \omega_1^\alpha\right) \dx \alpha.$$
	The vorticity is given by the winding number of $\omega_2^\alpha - \omega_1^\alpha$ around the origin, as $\alpha$ varies across the Brillouin zone $Y^*$.

	\begin{prop} \label{prop:vorticity}
		The vorticity $\nu$ vanishes for all $\delta$ small enough.
	\end{prop}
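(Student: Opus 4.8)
The plan is to reduce the statement to the leading-order frequencies and then compute the winding geometrically. By \Cref{thm:res_quasi}, $\omega_i^\alpha = \widetilde\omega_i^\alpha + O(\delta)$ with $\widetilde\omega_i^\alpha := \sqrt{\lambda_i^\alpha/|D_1|}$, where the error is uniform in $\alpha$. Since the bands are assumed separable and $Y^*$ is compact, $|\widetilde\omega_2^\alpha - \widetilde\omega_1^\alpha|$ is bounded below by a positive constant uniformly in $\alpha$; hence for $\delta$ small the straight-line homotopy between $\omega_2^\alpha-\omega_1^\alpha$ and $\widetilde\omega_2^\alpha-\widetilde\omega_1^\alpha$ stays in $\Cb\setminus\{0\}$, and the winding number, being a homotopy invariant, agrees for both. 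It therefore suffices to show that the closed loop $\alpha \mapsto \widetilde\omega_2^\alpha - \widetilde\omega_1^\alpha$ has winding number zero about the origin.

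The main tool is the elementary fact that a closed curve contained in a closed half-plane whose boundary line passes through the origin, and which avoids the origin, has winding number zero (no ray from $0$ into the complementary open half-plane is ever crossed). First I would record the structure of the capacitance coefficients: since the matrix $(C_{ij}^\alpha)$ is the Gram matrix of $\{\nabla V_i^\alpha\}$ it is Hermitian positive semidefinite, and the $\P$-symmetry gives $C_{22}^\alpha = C_{11}^\alpha$; thus $C_{11}^\alpha>0$ is real, $|C_{12}^\alpha|^2\ge 0$, and $(C_{11}^\alpha)^2 - |C_{12}^\alpha|^2 = \det(C_{ij}^\alpha) \ge 0$. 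From the explicit formula for $\lambda_{1,2}^\alpha$ I would then derive the three identities
\[
\lambda_1^\alpha + \lambda_2^\alpha = \tfrac{C_{11}^\alpha}{\rho}(\kappa_1+\kappa_2), \quad \lambda_1^\alpha\lambda_2^\alpha = \tfrac{\kappa_1\kappa_2}{\rho^2}\big((C_{11}^\alpha)^2 - |C_{12}^\alpha|^2\big), \quad \lambda_2^\alpha - \lambda_1^\alpha = \tfrac{2}{\rho}\sqrt{R^\alpha},
\]
where $R^\alpha = \big(\tfrac{\kappa_1-\kappa_2}{2}\big)^2 (C_{11}^\alpha)^2 + \kappa_1\kappa_2 |C_{12}^\alpha|^2$; the product identity uses $\big(\tfrac{\kappa_1+\kappa_2}{2}\big)^2 - \big(\tfrac{\kappa_1-\kappa_2}{2}\big)^2 = \kappa_1\kappa_2$.

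The point of these identities is that every right-hand side is a non-negative real combination of fixed complex constants: $\lambda_1^\alpha+\lambda_2^\alpha$ lies on the ray $\R_{>0}(\kappa_1+\kappa_2)$, the product $\lambda_1^\alpha\lambda_2^\alpha$ on the ray $\R_{\ge 0}\,\kappa_1\kappa_2$, and $R^\alpha$ in the convex cone generated by $\big(\tfrac{\kappa_1-\kappa_2}{2}\big)^2$ and $\kappa_1\kappa_2$. Each such set lies in a half-plane through the origin. Separability forces $R^\alpha\neq 0$, so $\lambda_2^\alpha-\lambda_1^\alpha$ is a closed loop in a half-plane avoiding $0$ and hence, together with its continuous square root, has zero winding; consequently $(\widetilde\omega_2^\alpha)^2 - (\widetilde\omega_1^\alpha)^2 = (\lambda_2^\alpha-\lambda_1^\alpha)/|D_1|$ has zero winding. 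Likewise $(\widetilde\omega_1^\alpha+\widetilde\omega_2^\alpha)^2 = (\lambda_1^\alpha+\lambda_2^\alpha)/|D_1| + 2\widetilde\omega_1^\alpha\widetilde\omega_2^\alpha$ is a sum of a point on the ray $\R_{>0}(\kappa_1+\kappa_2)$ and a point on the ray $\R_{\ge 0}\sqrt{\kappa_1\kappa_2}$, so it lies in a half-plane, and it is nonzero because $\widetilde\omega_1^\alpha+\widetilde\omega_2^\alpha = 0$ would force $\lambda_1^\alpha=\lambda_2^\alpha$. Thus $\widetilde\omega_1^\alpha+\widetilde\omega_2^\alpha$ also has zero winding. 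Finally, factoring $(\widetilde\omega_2^\alpha)^2 - (\widetilde\omega_1^\alpha)^2 = (\widetilde\omega_2^\alpha-\widetilde\omega_1^\alpha)(\widetilde\omega_2^\alpha+\widetilde\omega_1^\alpha)$ and using additivity of the winding number under products gives that $\widetilde\omega_2^\alpha-\widetilde\omega_1^\alpha$ has zero winding, which is the claim.

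I expect the main obstacle to be the careful bookkeeping of the square-root branches together with two special features of the problem: the reduction to leading order, which must be justified uniformly in $\alpha$ (using separability to keep the homotopy off the origin), and the point $\alpha=0$, where $\det(C_{ij}^\alpha)=0$ because $\nabla V_1^0 = -\nabla V_2^0$, so that $\lambda_1^0 = 0$ (the uniform acoustic mode). The argument is robust to this since it only ever invokes $\lambda_2^\alpha-\lambda_1^\alpha\neq 0$ and $\widetilde\omega_1^\alpha+\widetilde\omega_2^\alpha\neq 0$, both guaranteed by separability; but the write-up must confirm that the relevant loops stay closed, periodic in $\alpha$, and nonvanishing there so that the winding numbers are well defined.
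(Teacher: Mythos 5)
Your argument is essentially correct, but it takes a genuinely different route from the paper. The paper's proof never descends to the capacitance-matrix level: it uses the $\P$-symmetry of the dimer directly on the integral-operator formulation of \eqref{eq:scattering_quasip} to show that $\omega_j^{\alpha}=\omega_j^{-\alpha}$ \emph{exactly} (not just to leading order), so the loop $\alpha\mapsto\omega_2^\alpha-\omega_1^\alpha$ traces a curve for $\alpha\in[-\pi/L,0]$ and then retraces it backwards for $\alpha\in[0,\pi/L]$; this kills the winding about \emph{every} point of $\Cb$, a slightly stronger conclusion that the paper uses in its subsequent discussion. Your proof instead passes to the leading-order eigenvalues via \Cref{thm:res_quasi} and then runs a half-plane/winding computation on $\lambda_1^\alpha\pm\lambda_2^\alpha$; what it buys is an explicit geometric explanation (the relevant quantities live in fixed convex cones determined by $\kappa_1,\kappa_2$), and as a by-product the zero winding of $R^\alpha$ rules out band swapping around $Y^*$, but it only controls winding about the origin and requires the branch bookkeeping you flag. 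Two caveats you should tighten in a write-up: (i) the homotopy step needs separability of the \emph{leading-order} eigenvalues ($\lambda_1^\alpha\neq\lambda_2^\alpha$ for all $\alpha$), which is not formally implied by separability of the exact bands — though the paper effectively identifies the two, characterizing leading-order degeneracy as occurring only for $\kappa_1=\overline{\kappa_2}$ with $|\kappa|\geq\kappa_0$; and (ii) since $\lambda_i^\alpha=O(\delta)$, the lower bound on $|\widetilde\omega_2^\alpha-\widetilde\omega_1^\alpha|$ is of order $\sqrt{\delta}$, not an absolute constant, which still dominates the $O(\delta)$ remainder but should be stated as such (and the uniformity of that remainder near $\alpha=0$, where the quasiperiodic layer potentials degenerate, deserves a word). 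The paper's symmetry argument sidesteps all of these issues at the cost of being less explicit about the mechanism.
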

	\begin{proof}
	We will begin by proving that $\omega_j^\alpha = \omega_j^{-\alpha}$ for $j = 1,2$. We assume that we have a nonzero solution $u^\alpha$ to \eqref{eq:scattering_quasip} corresponding to $\omega_j^\alpha = \omega$. We set  $L^2(\p D) = L^2(\p D_1) \times L^2(\p D_2)$ and define $\hat{\S}_D^\omega$ and $\hat{\K}_D^{\omega,*}$ as
	$$\hat{\S}_D^\omega := \begin{pmatrix} \S_{D_1}^{k_1} & 0 \\ 0 & \S_{D_2}^{k_2} \end{pmatrix}, \qquad \hat{\K}_D^{\omega,*} := \begin{pmatrix} \K_{D_1}^{k_1,*} & 0 \\ 0 & \K_{D_2}^{k_2,*} \end{pmatrix}.$$ Using the single layer potentials, we can write $u^\alpha$ as 
	$$u^\alpha(x) = \begin{cases} \hat{\S}_D^\omega[\phi^\mathrm{in}](x), & x \in D, \\[0.3em] \S_D^{\alpha,k}[\phi^\mathrm{out}](x) & x\in Y \setminus \overline{D}, \end{cases}$$
	where $(\phi^\mathrm{in}, \phi^\mathrm{out}) \in L^2(\p D)$ satisfies the integral equation 
	$$\A^\alpha(\omega,\delta)\begin{pmatrix} \phi^\mathrm{in} \\ \phi^\mathrm{out}\end{pmatrix} = \begin{pmatrix} 0 \\ 0 \end{pmatrix}, \qquad \A^\alpha(\omega,\delta) := \begin{pmatrix} \hat{\S}_D^\omega & - \S_D^{\alpha,k} \\[0.3em] -\frac{1}{2}I + \hat{\K}_D^{\omega,*} & -\delta\left(\frac{1}{2}I + \K_D^{-\alpha,k,*}\right)\end{pmatrix}.$$

	Now, we define
	$$\psi^\mathrm{in} = \left(\hat{\S}_D^\omega\right) ^{-1}\P \hat{\S}_D^\omega[\phi^\mathrm{in}], \qquad \psi^\mathrm{out} = \phi^\mathrm{out},$$ 
	where, as before, $\P$ denotes the parity operator. Since $\S_{D_i}^0$ is invertible for $i=1,2$, and since $\omega$ scales as $O(\sqrt{\delta})$, $\psi^\mathrm{in}$ is well-defined for all $\delta$ small enough. Due to the $\P$-symmetry of $D$, we have 
	$$\S_D^{-\alpha,k} = \P\S_D^{\alpha,k},$$
	so it follows that 
	$$\A^{-\alpha}(\omega,\delta)\begin{pmatrix} \psi^\mathrm{in} \\ \psi^\mathrm{out}\end{pmatrix} = \begin{pmatrix} 0 \\ 0 \end{pmatrix}.$$
	Hence the function $u^{-\alpha}$, defined by 
	$$u^{-\alpha}(x) = \begin{cases} \hat{\S}_D^\omega[\psi^\mathrm{in}](x), & x \in D, \\[0.3em] \S_D^{-\alpha,k}[\psi^\mathrm{out}](x) & x\in Y \setminus \overline{D}, \end{cases}$$
	is a solution to \eqref{eq:scattering_quasip} at the quasiperiodicity $-\alpha$, corresponding to the frequency $\omega= \omega_j^{-\alpha}$. From this, we conclude that $\omega_j^{\alpha} = \omega_j^{-\alpha}$ for $j=1,2$, which implies that the winding number of $\omega_2^\alpha-\omega_1^\alpha$ vanishes with respect to any point in the complex plane.
	\end{proof}
	The \emph{skin effect} is a phenomenon where the system is highly sensitive to boundary conditions, and bulk modes can be localized. This has been linked to a non-zero vorticity, and
 	\Cref{prop:vorticity} suggests that the skin effect does not occur in the array of subwavelength resonators \cite{okuma2020topological}.
 	
	\Cref{fig:bandpt,fig:bandnpt} show the band structure in the cases of broken $\P\T$-symmetry and without $\P\T$-symmetry, respectively. In light of \Cref{rmk:dim}, we perform the computations in two spatial dimensions. Here, and throughout this work, the simulations were performed on circular resonators with unit radius and resonator separations $d = 0.5$ (within the unit cell) and $d'=6$ (between the unit cells). Moreover, the parameter values $\kappa =7000, \rho = 7000$ and $\rho_b=1$ are used throughout. The computations were performed using the multipole method as described in \cite{ammari2017subwavelength,ammari2018honeycomb}.
	
	As proven in \Cref{prop:vorticity}, when $\alpha$ varies from $-\pi/L$ to $\pi/L$, the frequencies $\omega^\alpha$ will initially (for $\alpha \in [-\pi/L,0]$) trace a curve in $\Cb$, and afterwards (for $\alpha \in [0,\pi/L]$) retrace the same curve with opposite orientation. Therefore, as illustrated in \Cref{fig:bandpt,fig:bandnpt}, the vorticity vanishes. 
	
	\begin{figure}[h]	
		\begin{center}
			\begin{subfigure}[b]{0.45\linewidth} 
				\centering
				\includegraphics[width=0.9\linewidth]{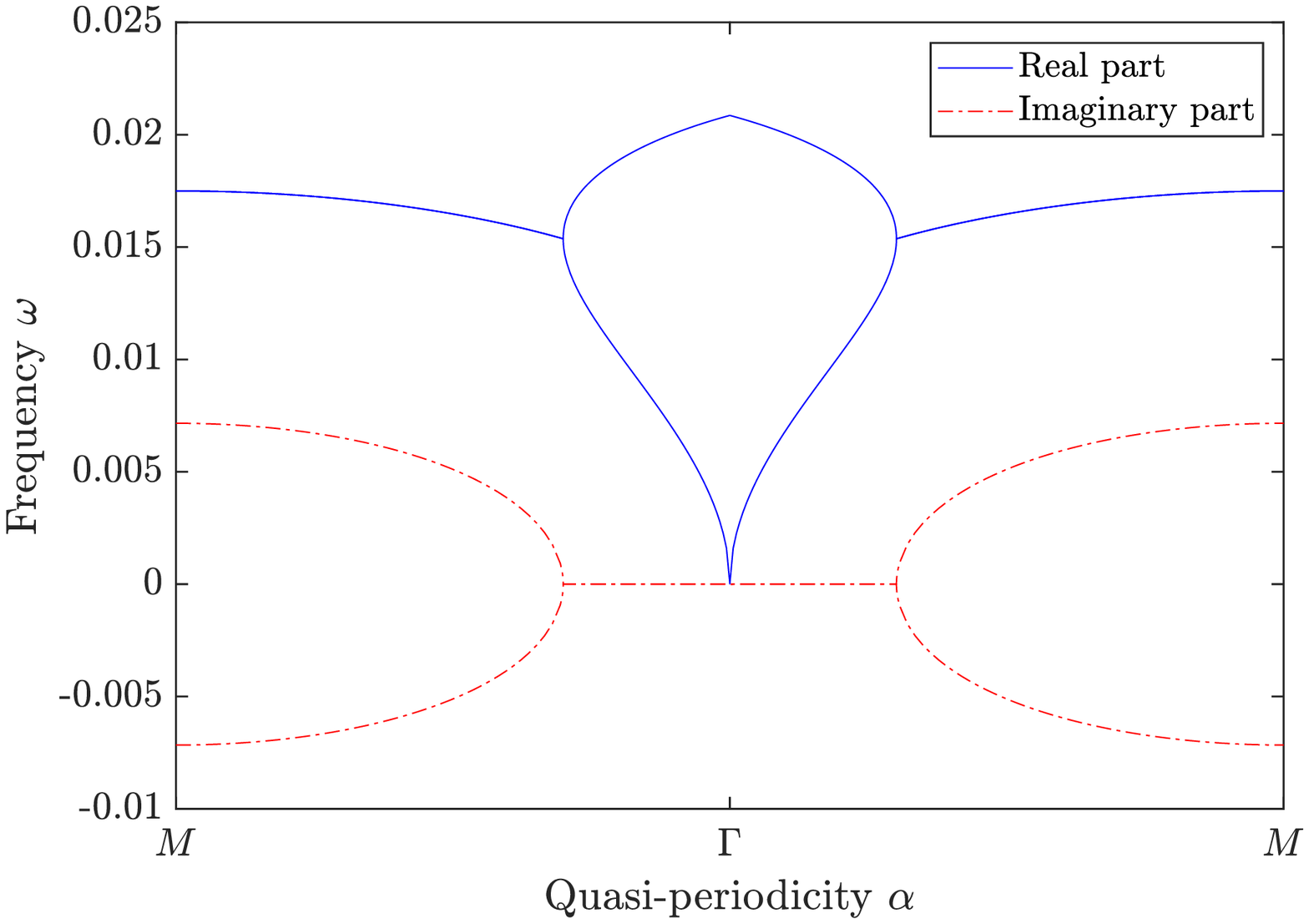}
				\caption{Band functions as function of the quasiperiodicity.}
			\end{subfigure}
			\hspace{0.6cm}
			\begin{subfigure}[b]{0.45\linewidth}
				\centering
				\includegraphics[width=0.9\linewidth]{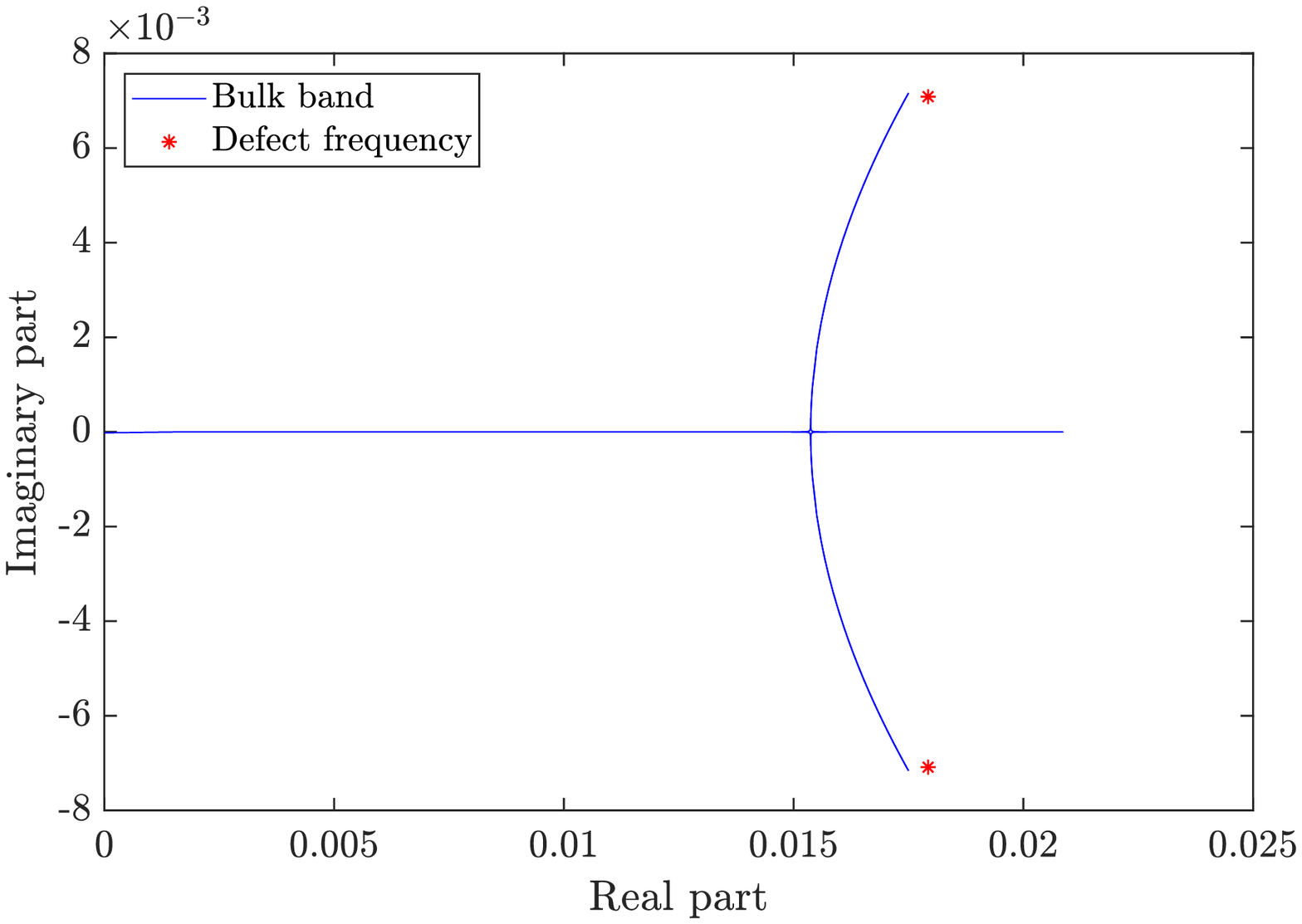}
				\caption{Trace of the band functions in the complex plane and defect frequency.}\label{fig:tracebpt}
			\end{subfigure}
		\end{center}
		\caption{Band structure in the case of a periodic micro-structure with broken $\P\T$-symmetry. Moreover, the frequencies of localized modes studied in \Cref{sec:first} are shown in \Cref{fig:tracebpt}. Here, we use the parameter values $\kappa_1 = 1 + 1.4\iu$ and $\kappa_2 = 1-1.4\iu$.} \label{fig:bandpt}
	\end{figure}

	\begin{figure}[h]	
	\begin{center}
		\begin{subfigure}[b]{0.45\linewidth} 
			\centering
			\includegraphics[width=0.9\linewidth]{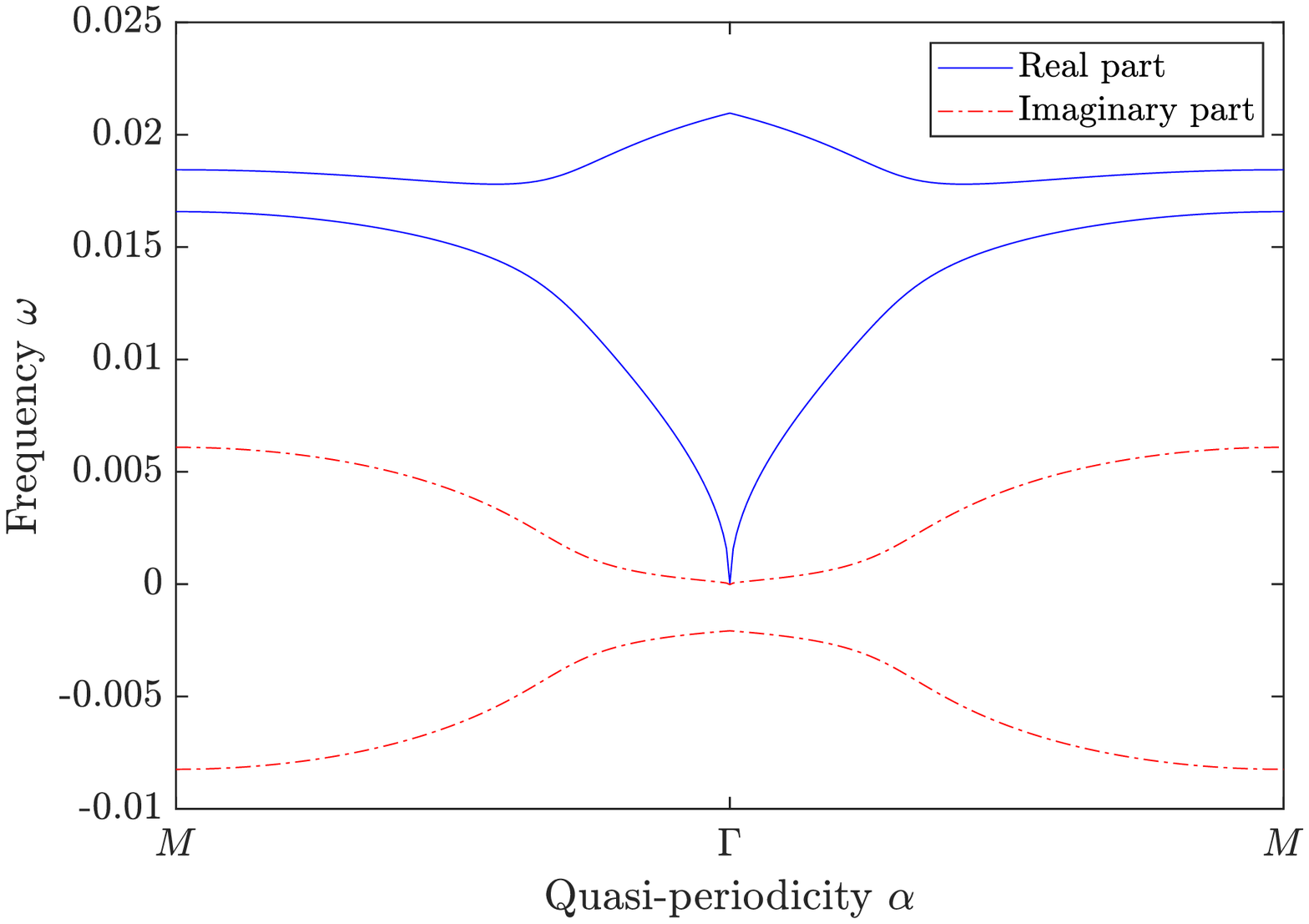}
			\caption{Band functions as function of the quasiperiodicity.}
		\end{subfigure}
		\hspace{0.6cm}
		\begin{subfigure}[b]{0.45\linewidth}
			\centering
			\includegraphics[width=0.9\linewidth]{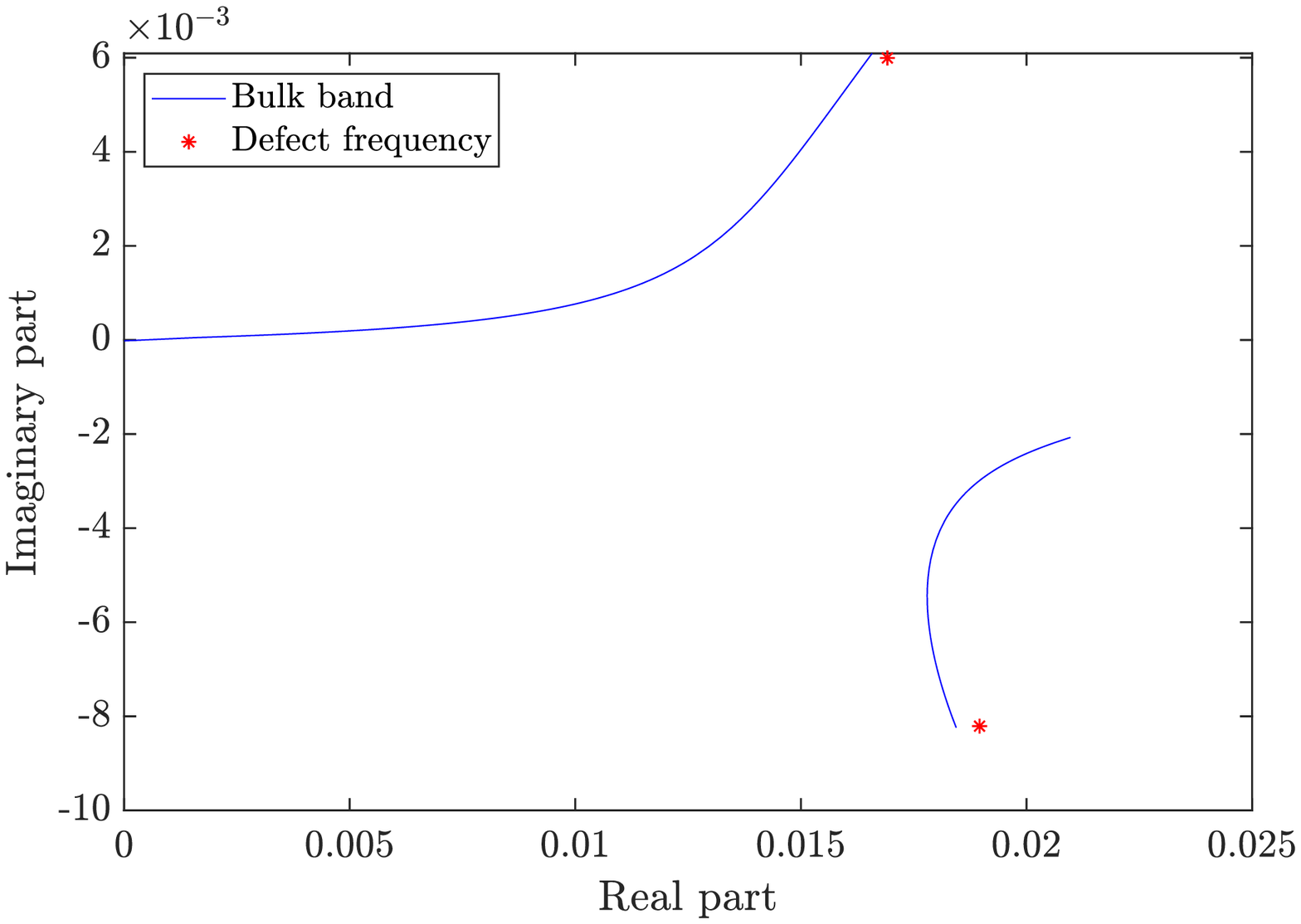}
			\caption{Trace of the band functions in the complex plane and defect frequency.} \label{fig:tracenpt}
		\end{subfigure}
	\end{center}
	\caption{Band structure in the case of a periodic micro-structure without $\P\T$-symmetry.  Moreover, the frequencies of localized modes studied in \Cref{sec:first} are shown in \Cref{fig:tracenpt}.  Here, we use $\kappa_1 = 1+1.2\iu, \kappa_2 = 1-1.6\iu$} \label{fig:bandnpt}
	\end{figure}
	
	\subsection{Non-Hermitian band inversion}
	In the case of real material parameters, it is well-known that \emph{band inversion} can occur, \ie{} that the monopole/dipole nature of the eigenmodes are swapped as $\alpha$ varies across the Brillouin zone (this has been demonstrated in the setting of subwavelength resonators in \cite{ammari2019topologically}). The band inversion is characterised by the so-called \emph{Zak phase}. In this section, we study a non-Hermitian generalization of the Zak phase, and demonstrate how band inversion can occur in non-Hermitian systems.
	
	Since $C^{v,\alpha}$ is non-Hermitian, the left and right eigenvectors do not coincide. For $j = 1,2,$ we let $\u_j = \left( \begin{smallmatrix} \u_{j,1} \\ \u_{j,2} \end{smallmatrix} \right)$ and $\v_j = \left( \begin{smallmatrix} \v_{j,1} \\ \v_{j,2} \end{smallmatrix} \right)  \ $  denote the eigenvectors of $C^{\alpha,v}$ and $\left(C^{\alpha,v}\right)^*$, respectively.

	\begin{lem}  \label{lem:modes}
		A bi-orthogonal system of eigenvectors $\u_j, \v_j,$ for  $j=1,2$, \ie{}, a system satisfying $\langle \v_i, \u_j \rangle = \delta_{ij}$, is given by
		$$\u_j = \frac{1}{\sqrt{2}}\begin{pmatrix} \ds e^{-\iu \phi_j} \\[0.3em] 1\end{pmatrix}, \qquad 
		\overline{\v_j} = \frac{1}{\sqrt{2}}\begin{pmatrix} \ds e^{\iu \theta_j^{(1)}} \\ e^{\iu \theta_j^{(2)}}\end{pmatrix},$$
		where the complex phases $\phi_j, \theta_j^{(1)},$ and $\theta_j^{(2)}$ are defined by
		\begin{align*}
		&e^{\iu (\theta_j^{(1)}-\theta_j^{(2)} + \phi_j)} = \frac{\kappa_2\overline{C_{12}^\alpha}}{\kappa_1C_{12}^\alpha},
		\qquad \qquad
		e^{\iu (\theta_j^{(1)}-\phi_j)} + e^{\iu \theta_j^{(2)}} = 2, \\
		&e^{-\iu \phi_j} = \frac{C_{11}^\alpha (\kappa_1-\kappa_2) + (-1)^j \sqrt{\left(\kappa_1-\kappa_2\right)^2(C_{11}^\alpha)^2 + 4\kappa_1\kappa_2|C_{12}^\alpha|^2}}{2\kappa_2\overline{C_{12}^\alpha}}.
		\end{align*}
	\end{lem}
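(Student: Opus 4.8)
The plan is to verify the claimed formulas by computing directly the right eigenvectors of $C^{v,\alpha}$ and the eigenvectors of its adjoint, and then reading off the three defining relations. First I would record the consequences of the parity symmetry $\P D = D$ together with the Hermitian structure of the (unweighted) capacitance coefficients, namely $C_{11}^\alpha = C_{22}^\alpha\in\R$ and $C_{21}^\alpha = \overline{C_{12}^\alpha}$; these are exactly the identities already used to obtain the closed form of $\lambda_j^\alpha$ stated before the lemma. With them in hand, $C^{v,\alpha} = \tfrac1\rho\left(\begin{smallmatrix}\kappa_1 C_{11}^\alpha & \kappa_1 C_{12}^\alpha\\ \kappa_2\overline{C_{12}^\alpha} & \kappa_2 C_{11}^\alpha\end{smallmatrix}\right)$, and since rescaling by $\rho$ does not change eigenvectors I would work with $\rho C^{v,\alpha}$ and write $r$ for the square-root term appearing in $\rho\lambda_j^\alpha = C_{11}^\alpha\tfrac{\kappa_1+\kappa_2}{2} + (-1)^j r$.

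For the right eigenvector, solving the first row of $(\rho C^{v,\alpha} - \rho\lambda_j^\alpha)\u_j = 0$ gives the component ratio $\u_{j,1}/\u_{j,2} = \kappa_1 C_{12}^\alpha/\big(C_{11}^\alpha\tfrac{\kappa_2-\kappa_1}{2} + (-1)^j r\big)$. The content of the relation defining $e^{-\iu\phi_j}$ is that this ratio equals $e^{-\iu\phi_j}$. To see this I would first observe $\sqrt{(\kappa_1-\kappa_2)^2(C_{11}^\alpha)^2 + 4\kappa_1\kappa_2|C_{12}^\alpha|^2} = 2r$ (for a consistent choice of branch), so that the stated right-hand side equals $\big(C_{11}^\alpha\tfrac{\kappa_1-\kappa_2}{2} + (-1)^j r\big)/(\kappa_2\overline{C_{12}^\alpha})$; equating the two forms and cross-multiplying reduces, using $C_{11}^\alpha\in\R$, to the identity $\big((-1)^j r\big)^2 - \big(C_{11}^\alpha\tfrac{\kappa_1-\kappa_2}{2}\big)^2 = \kappa_1\kappa_2|C_{12}^\alpha|^2$, which is immediate from the definition of $r$. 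This pins down $\u_j$ up to the scalar $\tfrac1{\sqrt2}$ fixed in the statement.

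For the left eigenvectors I would use that $(C^{v,\alpha})^*\v_j = \overline{\lambda_j^\alpha}\v_j$ is equivalent, after conjugation, to $\overline{\v_j}$ being a right eigenvector of $(C^{v,\alpha})^{T}$ for the eigenvalue $\lambda_j^\alpha$. Solving the first row of this transposed system shows that the ratio of the components of $\overline{\v_j}$, namely $e^{\iu(\theta_j^{(1)}-\theta_j^{(2)})}$, equals $\kappa_2\overline{C_{12}^\alpha}/\big(C_{11}^\alpha\tfrac{\kappa_2-\kappa_1}{2} + (-1)^j r\big)$. Multiplying through by $e^{\iu\phi_j}$ and substituting the right-eigenvector ratio above to cancel the common denominator collapses this to $\kappa_2\overline{C_{12}^\alpha}/(\kappa_1 C_{12}^\alpha)$, which is exactly the relation $e^{\iu(\theta_j^{(1)}-\theta_j^{(2)}+\phi_j)} = \kappa_2\overline{C_{12}^\alpha}/(\kappa_1 C_{12}^\alpha)$. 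Finally, with $\u_j = \tfrac1{\sqrt2}\left(\begin{smallmatrix}e^{-\iu\phi_j}\\1\end{smallmatrix}\right)$ and $\overline{\v_j} = \tfrac1{\sqrt2}\left(\begin{smallmatrix}e^{\iu\theta_j^{(1)}}\\e^{\iu\theta_j^{(2)}}\end{smallmatrix}\right)$, the diagonal bi-orthogonality reads $\langle\v_j,\u_j\rangle = \tfrac12\big(e^{\iu(\theta_j^{(1)}-\phi_j)} + e^{\iu\theta_j^{(2)}}\big)$, so the normalization $\langle\v_j,\u_j\rangle = 1$ is precisely the relation $e^{\iu(\theta_j^{(1)}-\phi_j)} + e^{\iu\theta_j^{(2)}} = 2$; the off-diagonal vanishing $\langle\v_i,\u_j\rangle = 0$ for $i\neq j$ then follows from the general bi-orthogonality of the eigenvectors of a matrix and its adjoint belonging to distinct eigenvalues, valid in the separable regime $\lambda_1^\alpha\neq\lambda_2^\alpha$.

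I expect the main obstacle to be bookkeeping rather than conceptual: because $\kappa_1,\kappa_2$ and $C_{12}^\alpha$ are complex, the ``phases'' $\phi_j,\theta_j^{(1)},\theta_j^{(2)}$ are genuine complex logarithms, so $e^{\iu\phi_j}$ and the like need not have unit modulus, and one must verify that the three relations are mutually consistent (the relation for $e^{-\iu\phi_j}$ fixes $\phi_j$, after which the remaining two relations determine $e^{\iu\theta_j^{(1)}}$ and $e^{\iu\theta_j^{(2)}}$ uniquely) while keeping the branch of the square root in line with the one used in $\lambda_j^\alpha$. Some care is also needed to ensure that $C_{12}^\alpha$ and the spectral gap do not vanish, which is where the separability hypothesis enters.
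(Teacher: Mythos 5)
Your computation is correct, and it is exactly the direct verification the paper intends: the lemma is stated without proof, being a routine calculation of the right eigenvectors of $C^{v,\alpha}$ and of its adjoint using $C_{11}^\alpha=C_{22}^\alpha\in\R$, $C_{21}^\alpha=\overline{C_{12}^\alpha}$, followed by the bi-orthogonal normalization. Your identification of the square root in the lemma with $2r$, the cross-multiplication identity $r^2-\bigl(\tfrac{\kappa_1-\kappa_2}{2}\bigr)^2(C_{11}^\alpha)^2=\kappa_1\kappa_2|C_{12}^\alpha|^2$, and the reduction of the normalization to $e^{\iu(\theta_j^{(1)}-\phi_j)}+e^{\iu\theta_j^{(2)}}=2$ all check out, as does your caveat that the separability hypothesis is what guarantees the off-diagonal orthogonality and the solvability for the complex phases.
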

	We define the functions $S_j^\alpha$ by
	$$S_j^\alpha(x) := \begin{cases} \frac{1}{\sqrt{|D_1|}}\delta_{ij} \quad &x \in D_i, \ i=1,2, \\ \frac{1}{\sqrt{|D_1|}}V_j^\alpha(x) \quad &x \in Y\setminus D. \end{cases}$$
	These functions are the normalized extensions of $V_j^\alpha$ (defined in \eqref{eq:V_quasi}), in the sense that $\langle S_i, S_j \rangle = \delta_{ij}$. Here, and in the remainder of this work, $\langle \cdot, \cdot\rangle$ denotes the inner product in $L^2(D)$.
	
	The following approximation	result is a straightforward generalization of results from \cite{ammari2018honeycomb,ammari2019topologically}.
	\begin{lem}
		As $\delta \rightarrow 0$, we have the following approximation of the right and left Bloch eigenmodes:
		\begin{align*}
		u_j^\alpha &= \u_{j,1} S_1^\alpha + \u_{j,2} S_2^\alpha + O(\delta^{1/2}), \\
		v_j^\alpha &= \v_{j,1} S_1^\alpha + \v_{j,2} S_2^\alpha + O(\delta^{1/2}).
		\end{align*}
	\end{lem}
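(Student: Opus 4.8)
The plan is to reduce the continuous Bloch eigenmode problem to the finite-dimensional capacitance eigenvalue problem, following the scheme used for the Hermitian analogues in \cite{ammari2018honeycomb,ammari2019topologically}. For the right mode I would start from the single-layer representation used in the proof of \Cref{prop:vorticity}, writing $u_j^\alpha = \hat{\S}_D^\omega[\phi^{\mathrm{in}}]$ inside $D$ and $u_j^\alpha = \S_D^{\alpha,k}[\phi^{\mathrm{out}}]$ in $Y\setminus\overline{D}$, with $(\phi^{\mathrm{in}},\phi^{\mathrm{out}})$ in the kernel of $\A^\alpha(\omega,\delta)$. Since \Cref{thm:res_quasi} gives $\omega = O(\delta^{1/2})$, both $k$ and $k_i$ are $O(\delta^{1/2})$, so \eqref{eq:Sexp} yields $\S_D^{\alpha,k}=\S_D^{\alpha,0}+O(\delta)$ and likewise $\hat{\S}_D^\omega = \hat{\S}_D^0 + O(\delta)$; this is what makes the expansion in $\delta$ possible.

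First I would extract the leading spatial profile. In each $D_i$ the mode solves $\Delta u_j^\alpha + k_i^2 u_j^\alpha = 0$ with $k_i^2 = O(\delta)$, so it is harmonic to leading order; combined with the invertibility of $\S_{D_i}^0$ this forces $u_j^\alpha$ to equal a constant $c_i$ on $D_i$ up to an $O(\delta^{1/2})$ error. In $Y\setminus D$ the mode is $\alpha$-quasiperiodic, satisfies $\Delta u_j^\alpha = O(\delta)$, decays transversally and has boundary data $c_i$ on $\p D_i$, so by linearity and the defining problem \eqref{eq:V_quasi} it coincides with $c_1 V_1^\alpha + c_2 V_2^\alpha$ to leading order. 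After normalization this is exactly $u_j^\alpha = c_1 S_1^\alpha + c_2 S_2^\alpha + O(\delta^{1/2})$, and it remains to identify $(c_1,c_2)$.

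To pin down the coefficients I would integrate the interior equation over $D_i$, apply the divergence theorem and insert the transmission condition $\p u_j^\alpha/\p\nu|_- = \delta\,\p u_j^\alpha/\p\nu|_+$, obtaining $\delta\int_{\p D_i}\p u_j^\alpha/\p\nu|_+\dx\sigma + k_i^2\int_{D_i}u_j^\alpha\dx x = 0$. Evaluating the exterior flux through the harmonic extension gives $\int_{\p D_i}\p u_j^\alpha/\p\nu|_+ = -\sum_l c_l C_{il}^\alpha + O(\delta^{1/2})$ by \eqref{eq:qp_capacitance}, while $\int_{D_i}u_j^\alpha = c_i|D_1| + O(\delta^{1/2})$; using $k_i^2 = \omega^2\rho_b/\kappa_i$ and $\rho_b/\rho = \delta$, this collapses to $C^{v,\alpha}(c_1,c_2)^\top = \omega^2|D_1|(c_1,c_2)^\top + O(\delta^{3/2})$. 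Comparing with \Cref{thm:res_quasi}, $(c_1,c_2)$ is the right eigenvector $\u_j$ of $C^{v,\alpha}$ to leading order, which gives the first identity. The left eigenmode $v_j^\alpha$ solves the same boundary value problem with $\kappa_i$ replaced by $\overline{\kappa_i}$ at frequency $\overline{\omega}$, so the identical computation produces the eigenvalue relation for the adjoint capacitance matrix and identifies its coefficients with $\v_j$ from \Cref{lem:modes}.

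The step I expect to be delicate is not the right mode but the correct bookkeeping of complex conjugations and $\kappa$-weightings in the left-mode reduction. Running the argument for the conjugate problem literally produces the eigenvector of $\mathrm{diag}(\overline{\kappa_i}/\rho)\,C^\alpha$, which differs from the left eigenvector $\v_j$ of $C^{v,\alpha}$ (the eigenvector of $(C^{v,\alpha})^*$) by the diagonal factor $\mathrm{diag}(\overline{\kappa_i})$ whenever $\kappa_1\neq\kappa_2$; one therefore has to check that the normalization built into $S_j^\alpha$ via $\langle S_i,S_j\rangle = \delta_{ij}$, together with the chosen normalization of $\u_j,\v_j$, reconciles this and restores the bi-orthogonality $\langle\v_i,\u_j\rangle=\delta_{ij}$. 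The remaining work is routine error control: bounding the interior correction to the constant field and the exterior harmonic extension uniformly in $\alpha$ via \eqref{eq:Sexp} and the invertibility of $\S_{D_i}^0$ and $\S_D^{\alpha,0}$ (the latter degenerating as $\alpha\to0$, which must be handled separately), so that all remainders are genuinely $O(\delta^{1/2})$.
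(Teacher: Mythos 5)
The paper itself does not write out a proof of this lemma --- it simply invokes the corresponding results of the Hermitian papers --- and your reduction for the \emph{right} eigenmode is exactly that argument: constancy of $u_j^\alpha$ inside $D_i$ to leading order, extension by $V_i^\alpha$ outside, and the flux identity $\int_{\p D_i}\tfrac{\p u_j^\alpha}{\p\nu}\big|_+\dx\sigma=-\sum_l c_l C_{il}^\alpha+O(\delta^{1/2})$ combined with the transmission condition to recover $C^{v,\alpha}(c_1,c_2)^{\mathrm{T}}=\omega^2|D_1|(c_1,c_2)^{\mathrm{T}}+O(\delta^{3/2})$. This half is correct and is the same computation the paper performs in the proof of \Cref{prop:loc_main}.

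The gap is in the left-eigenmode step, precisely where you flagged it --- but the repair you gesture at does not work. Running your argument on the conjugate problem ($\overline{\kappa_i}$ at frequency $\overline{\omega}$) produces a coefficient vector that is a right eigenvector of $\tfrac{1}{\rho}\mathrm{diag}(\overline{\kappa_1},\overline{\kappa_2})C^\alpha$, hence proportional to $\mathrm{diag}(\overline{\kappa_1},\overline{\kappa_2})\v_j$: since $C^\alpha$ is Hermitian, $(C^{v,\alpha})^*=\tfrac{1}{\rho}C^\alpha\,\mathrm{diag}(\overline{\kappa_1},\overline{\kappa_2})$ is similar to $\tfrac{1}{\rho}\mathrm{diag}(\overline{\kappa_1},\overline{\kappa_2})C^\alpha$ via conjugation by the diagonal factor. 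For $\kappa_1\neq\kappa_2$ this vector is not parallel to $\v_j$, and no choice of normalization of $S_j^\alpha$ or of $\u_j,\v_j$ can fix this: normalizations only adjust scalar prefactors, whereas here the \emph{direction} of the coefficient vector is off. The statement is consistent only if the left Bloch eigenmode is understood as the conjugate-PDE solution multiplied by $1/\overline{\kappa_i}$ inside $D_i$ --- equivalently, the adjoint is taken with respect to the pairing weighted by $1/\kappa$, which is the natural one for the generalized eigenvalue problem $\Delta u=-\omega^2\tfrac{\rho_b}{\kappa_i}u$ in $D_i$. That this weighting is forced can be read off from \Cref{lem:phase}, which requires $\langle v_i^\alpha,u_j^\alpha\rangle=\delta_{ij}$ in the unweighted $L^2(D)$ inner product; a complete proof must make it explicit rather than hope the normalization absorbs it. The remaining technical points you list (uniformity of the remainders in $\alpha$, the degeneration of $\S_D^{\alpha,0}$ as $\alpha\to 0$) are genuine but routine.
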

	We then define the \textit{(non-Hermitian) Zak phase}, $\varphi_j^{\mathrm{zak}}$, by \cite{ghatak2019new} 
	\begin{align*}
	\varphi_j^{\mathrm{zak}} &:= \frac{\iu}{2} \int_{Y^*}\left( \Big\langle v_j^\alpha, \frac{\p u_j^\alpha  }{\p \alpha}\Big\rangle + \Big\langle u_j^\alpha, \frac{\p  v_j^\alpha}{\p \alpha} \Big\rangle \right) \dx \alpha.
	\end{align*}
	In the case $\kappa_1, \kappa_2\in \R$, this definition coincides with the definition used in \cite{ammari2019topologically}. In the sequel, we will occasionally write $\varphi_j^{\mathrm{zak}}(\kappa_1, \kappa_2)$ to denote the Zak phase corresponding to the bulk modulus $\kappa_1$ inside $D_1$ and $\kappa_2$ inside $D_2$. 
	
	A $2\times 2$ matrix $A = A(\alpha)$ is said to be \emph{chirally symmetric} if $A$ can be written 
	$$A(\alpha) = f(\alpha) I + B(\alpha),$$
	for some real function $f$ and some off-diagonal matrix $B$. In the case of the weighted quasiperiodic capacitance matrix, we have $C_{11}^\alpha = C_{22}^\alpha \in \R$ \cite{ammari2019topologically}, so $C^{v,\alpha}$ is chirally symmetric precisely in the case $\kappa_1=\kappa_2\in \R$. Non-Hermitian systems with chiral symmetry are known to have quantized Zak phases, which is not the case without chiral symmetry \cite{jiang2018topological,yin2018geometrical,liang2013topological}.
	\begin{lem} \label{lem:phase}
		The Zak phase $\varphi_j^{\mathrm{zak}}, j = 1,2,$ can be written as
		$$\varphi_j^{\mathrm{zak}} = -\mathrm{Im}\left( \int_{Y^*}\Big\langle \v_j,\frac{\p \u_j}{\p \alpha} \Big\rangle \dx \alpha \right) + O(\delta).$$
	\end{lem}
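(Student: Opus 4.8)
The plan is to reduce the two functional pairings appearing in the definition of $\varphi_j^{\mathrm{zak}}$, which are integrals over $L^2(D)$, to the finite-dimensional Hermitian pairings of the discrete eigenvectors $\u_j,\v_j$ on $\Cb^2$, and then to exploit the exact bi-orthogonality relation $\langle \v_i,\u_j\rangle = \delta_{ij}$ from \Cref{lem:modes} to collapse the symmetric combination into a single imaginary part.

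First I would insert the leading-order mode approximation of the preceding lemma, namely $u_j^\alpha = \u_{j,1}S_1^\alpha + \u_{j,2}S_2^\alpha + O(\delta^{1/2})$ and $v_j^\alpha = \v_{j,1}S_1^\alpha + \v_{j,2}S_2^\alpha + O(\delta^{1/2})$, into the pairings. The crucial observation is that, restricted to $D$, the function $S_i^\alpha$ equals the \emph{real, $\alpha$-independent} constant $\tfrac{1}{\sqrt{|D_1|}}\delta_{\cdot\, i}$ on each resonator; hence $\tfrac{\p}{\p\alpha}S_i^\alpha$ vanishes on $D$, and differentiating a mode in $\alpha$ produces on $D$ only the derivatives of the scalar coefficients. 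Combining this with the orthonormality $\langle S_i^\alpha,S_k^\alpha\rangle = \delta_{ik}$ and using that the pairing is conjugate-linear in its first argument (the convention under which $\langle\v_i,\u_j\rangle=\delta_{ij}$ holds for the vectors of \Cref{lem:modes}), I obtain
$$\Big\langle v_j^\alpha,\frac{\p u_j^\alpha}{\p\alpha}\Big\rangle = \Big\langle \v_j,\frac{\p\u_j}{\p\alpha}\Big\rangle + O(\delta), \qquad \Big\langle u_j^\alpha,\frac{\p v_j^\alpha}{\p\alpha}\Big\rangle = \Big\langle \u_j,\frac{\p\v_j}{\p\alpha}\Big\rangle + O(\delta),$$
where the right-hand pairings are now the inner products on $\Cb^2$.

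The algebraic heart is then the bi-orthogonality argument. Since $\langle\v_j,\u_j\rangle = 1$ holds for every $\alpha$ by \Cref{lem:modes}, differentiating in $\alpha$ gives $\langle\tfrac{\p\v_j}{\p\alpha},\u_j\rangle + \langle\v_j,\tfrac{\p\u_j}{\p\alpha}\rangle = 0$, and since $\langle\tfrac{\p\v_j}{\p\alpha},\u_j\rangle = \overline{\langle\u_j,\tfrac{\p\v_j}{\p\alpha}\rangle}$ this rearranges to $\langle\u_j,\tfrac{\p\v_j}{\p\alpha}\rangle = -\overline{\langle\v_j,\tfrac{\p\u_j}{\p\alpha}\rangle}$. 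Consequently the symmetric combination collapses to $\langle\v_j,\tfrac{\p\u_j}{\p\alpha}\rangle + \langle\u_j,\tfrac{\p\v_j}{\p\alpha}\rangle = 2\iu\,\mathrm{Im}\langle\v_j,\tfrac{\p\u_j}{\p\alpha}\rangle$, and multiplying by $\tfrac{\iu}{2}$ turns the integrand into $-\mathrm{Im}\langle\v_j,\tfrac{\p\u_j}{\p\alpha}\rangle$. Integrating over $Y^*$ and pulling the real-linear operation $\mathrm{Im}$ outside the (real) integral yields the claimed formula, the accumulated error remaining $O(\delta)$.

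I expect the main obstacle to be the error bookkeeping in the reduction step: the preceding lemma only furnishes an $O(\delta^{1/2})$ approximation globally, so obtaining the sharper $O(\delta)$ error requires noting that this $O(\delta^{1/2})$ discrepancy lives in the exterior region $Y\setminus D$, whereas on $D$ itself the subwavelength modes are constant on each resonator up to $O(\delta)$ — this follows because inside $D_i$ one has $\Delta u + (k_i)^2 u = 0$ with $(k_i)^2 = O(\delta)$, so the deviation from a constant is $O(\delta)$. Since the Zak-phase pairings are taken only over $D$, this interior estimate is what governs the error. One must also verify that this $O(\delta)$ bound survives differentiation in $\alpha$, which follows from the smooth (indeed analytic) dependence of the modes and of the eigenvectors $\u_j,\v_j$ on the quasiperiodicity away from the band degeneracy.
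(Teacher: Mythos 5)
Your proposal is correct and follows essentially the same route as the paper: insert the mode approximations, use that $S_i^\alpha$ restricted to $D$ is constant and $\alpha$-independent together with $\langle S_i^\alpha,S_k^\alpha\rangle=\delta_{ik}$ to reduce the pairings to the discrete ones, and then use the (bi-)normalization to collapse the symmetric combination into $2\iu\,\mathrm{Im}\langle\v_j,\p_\alpha\u_j\rangle$. The only cosmetic difference is that the paper differentiates the continuous normalization $\langle v_j^\alpha,u_j^\alpha\rangle=1$ while you differentiate the discrete bi-orthogonality; your explicit factor of $2$ is the correct one needed to match the stated $-\mathrm{Im}$ after multiplying by $\iu/2$.
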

	\begin{proof}
		Observe that
		$$  \langle S_1^\alpha,  S_1^\alpha \rangle = 1, \qquad  \langle S_2^\alpha,  S_2^\alpha \rangle = 1, \qquad \langle S_1^\alpha,  S_2^\alpha \rangle = 0,$$
		and in $D$ we have 
		$$ \frac{\p}{\p \alpha}  S_1^\alpha \equiv 0, \qquad \frac{\p}{\p \alpha}  S_2^\alpha \equiv 0,$$
		for all $\alpha \in Y^*$. We then have
		\begin{align*}
		\Big\langle v_j^\alpha, \frac{\p  u_j^\alpha}{\p \alpha} \Big\rangle  &= \overline{\v_{j,1}}\frac{\p \u_{j,1}}{\p \alpha} + \overline{\v_{j,2}}\frac{\p \u_{j,2}}{\p \alpha} + O(\delta) \\
		&=\Big\langle \v_j,\frac{\p \u_j}{\p \alpha} \Big\rangle  + O(\delta).
		\end{align*}
		Moreover, from the normalization $\langle v_j^\alpha,u_j^\alpha \rangle =1$ we have
		\begin{align*}
		\Big\langle u_j^\alpha, \frac{\p  v_j^\alpha}{\p \alpha} \Big\rangle = - \overline{\Big\langle v_j^\alpha, \frac{\p u_j^\alpha  }{\p \alpha}\Big\rangle }.
		\end{align*}
		Combining the above approximations, we have
		$$\Big\langle v_j^\alpha, \frac{\p u_j^\alpha  }{\p \alpha}\Big\rangle + \Big\langle u_j^\alpha, \frac{\p  v_j^\alpha}{\p \alpha} \Big\rangle = \iu \, \mathrm{Im}\left( \Big\langle \v_j,\frac{\p \u_j}{\p \alpha} \Big\rangle \right) + O(\delta).$$
		Substituting this approximation into the definition of the Zak phase, we obtain the sought expression for $\varphi_j^{\mathrm{zak}}$ as $\delta \rightarrow 0$.
	\end{proof}

	For the next result, we will assume that the Hermitian counterpart of the structure is \emph{topologically trivial}. In other words, we assume
	\begin{equation}\label{eq:trivial}
	\varphi_j^{\mathrm{zak}}(\Re(\kappa_1), \Re(\kappa_2)) = 0.
	\end{equation}
	As shown in \cite{ammari2019topologically}, this can for example be achieved by having a dilute dimerized array, where the resonator separation is smaller within the unit cell compared to between the cells.
	\begin{prop} \label{prop:zak}
		Assume that the structure satisfies \eqref{eq:trivial} and that $\kappa_1, \kappa_2$ are chosen such that the first two band functions are separable. Then we have
		$$\varphi_j^{\mathrm{zak}}(\kappa_1, \kappa_2) = - \varphi_j^{\mathrm{zak}}(\kappa_2, \kappa_1) + O(\delta) \quad \text{and} \quad \varphi_j^{\mathrm{zak}}(\overline{\kappa_1}, \overline{\kappa_2}) = \varphi_j^{\mathrm{zak}}(\kappa_1, \kappa_2) + O(\delta).$$
		In particular, if $\kappa_1 = \overline{\kappa_2} = \kappa$, we have $\varphi_j^{\mathrm{zak}}(\kappa, \overline{\kappa}) = O(\delta).$
	\end{prop}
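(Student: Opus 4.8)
The plan is to reduce both identities to symmetries of the weighted capacitance matrix $C^{v,\alpha}$ under the reflection $\alpha\mapsto-\alpha$, and then push these through the Zak-phase formula of \Cref{lem:phase}. First I would record the geometric symmetry of the capacitance coefficients. Complex-conjugating the cell problem \eqref{eq:V_quasi} and invoking uniqueness gives $V_j^{-\alpha}=\overline{V_j^\alpha}$, hence $C_{ij}^{-\alpha}=\overline{C_{ij}^\alpha}$; combined with $C_{11}^\alpha=C_{22}^\alpha\in\R$ and $C_{21}^\alpha=\overline{C_{12}^\alpha}$ this yields the single key relation $C_{12}^{-\alpha}=\overline{C_{12}^\alpha}$. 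Writing $J:=\left(\begin{smallmatrix}0&1\\1&0\end{smallmatrix}\right)$ for the swap of the two resonators, a direct substitution then gives the matrix identities
\[
C^{v,\alpha}(\kappa_2,\kappa_1)=J\,C^{v,-\alpha}(\kappa_1,\kappa_2)\,J,\qquad C^{v,\alpha}(\overline{\kappa_1},\overline{\kappa_2})=\overline{C^{v,-\alpha}(\kappa_1,\kappa_2)}.
\]
Thus interchanging $\kappa_1,\kappa_2$ amounts to $\alpha\mapsto-\alpha$ followed by conjugation with $J$, and conjugating the moduli amounts to $\alpha\mapsto-\alpha$ followed by entrywise conjugation. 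In both cases the eigenvalue formula is symmetric enough that band $j$ is mapped to band $j$.

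Next I would transport the bi-orthonormal system $\u_j,\v_j$ of \Cref{lem:modes} through each identity and substitute into $\varphi_j^{\mathrm{zak}}=-\mathrm{Im}\int_{Y^*}\langle\v_j,\p\u_j/\p\alpha\rangle\dx\alpha+O(\delta)$. For the conjugation identity the transported right eigenvector is $\overline{\u_j(-\alpha)}$, whose second component is still $1/\sqrt2$; it is therefore already the canonical eigenvector of \Cref{lem:modes}, and likewise for $\overline{\v_j(-\alpha)}$. Carrying out the substitution $\alpha\mapsto-\alpha$ (which reverses the orientation of the integral over $Y^*$) together with the conjugation, and using $\mathrm{Im}\,\overline z=-\mathrm{Im}\,z$, the two sign changes cancel and give $\varphi_j^{\mathrm{zak}}(\overline{\kappa_1},\overline{\kappa_2})=\varphi_j^{\mathrm{zak}}(\kappa_1,\kappa_2)+O(\delta)$ with no further correction.

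For the swap identity the same substitution produces $-\varphi_j^{\mathrm{zak}}(\kappa_1,\kappa_2)$, but now $J\u_j(-\alpha)$ has second component $\tfrac1{\sqrt2}e^{-\iu\phi_j(-\alpha)}$, so restoring the canonical gauge requires multiplying by $c=e^{\iu\phi_j(-\alpha)}$. Since an $\alpha$-dependent rescaling $\u_j\mapsto c\,\u_j,\ \v_j\mapsto \v_j/\overline c$ (which preserves $\langle\v_j,\u_j\rangle=1$) shifts the Zak phase by exactly $-2\pi$ times the winding number of $c$ about the origin, I would obtain $\varphi_j^{\mathrm{zak}}(\kappa_2,\kappa_1)=-\varphi_j^{\mathrm{zak}}(\kappa_1,\kappa_2)-2\pi w_j+O(\delta)$, where $w_j$ is the winding of $e^{-\iu\phi_j}$ around $0$. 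To evaluate it I use the explicit ratio $e^{-\iu\phi_j}=N^\alpha/(2\kappa_2\overline{C_{12}^\alpha})$ from \Cref{lem:modes}: because the radicand in $N^\alpha$ depends on $\alpha$ only through the real quantity $|C_{12}^\alpha|^2$, it is confined to a line segment avoiding the origin (by separability), so $N^\alpha$ has zero winding and $w_j$ equals the winding of $1/\overline{C_{12}^\alpha}$, i.e. the purely geometric winding number $w_C$ of $C_{12}^\alpha$, independent of $j$ and of $\kappa_1,\kappa_2$. Pinning $w_C$ down is where \eqref{eq:trivial} enters: in the Hermitian chiral reference $\kappa_1=\kappa_2=\Re(\kappa_1)$ one has $\u_j=\v_j$ and real $\phi_j$, so $\varphi_j^{\mathrm{zak}}(\Re\kappa_1,\Re\kappa_2)=\tfrac12\,[\phi_j]_{Y^*}=-\pi w_C$, and \eqref{eq:trivial} forces $w_C=0$, hence $w_j=0$ and identity (I) as a genuine equality. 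The ``in particular'' statement then follows by applying (I) to $(\kappa,\overline\kappa)$ and (II) to $(\overline\kappa,\kappa)$ and combining, which yields $\varphi_j^{\mathrm{zak}}(\kappa,\overline\kappa)=-\varphi_j^{\mathrm{zak}}(\kappa,\overline\kappa)+O(\delta)$.

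I expect the main obstacle to be precisely the bookkeeping of this gauge/winding term in (I): the matrix symmetries give both identities immediately modulo $2\pi\Z$, and the real work is to show that the integer winding vanishes. This is exactly the role of the triviality hypothesis \eqref{eq:trivial}, via the identification of the offending winding $w_j$ with the geometric winding $w_C$ of $C_{12}^\alpha$ and its computation in the Hermitian limit. A secondary technical point is checking that the square-root branch in $N^\alpha$ (and thereby the band labelling) is consistent under $\alpha\mapsto-\alpha$ and under conjugation, which is what guarantees that band $j$ is mapped to band $j$ throughout the argument.
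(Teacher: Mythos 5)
Your route is genuinely different from the paper's, and the comparison is instructive. The paper does not work band-by-band with windings at all: it first derives the two ``crossed'' relations $\varphi_1^{\mathrm{zak}}(\kappa_1,\kappa_2)=\varphi_2^{\mathrm{zak}}(\kappa_2,\kappa_1)+O(\delta)$ and $\varphi_1^{\mathrm{zak}}(\overline{\kappa_1},\overline{\kappa_2})=-\varphi_2^{\mathrm{zak}}(\kappa_1,\kappa_2)+O(\delta)$ directly from Lemmas \ref{lem:modes} and \ref{lem:phase}, and then invokes the quantization of the total Zak phase $\varphi_1^{\mathrm{zak}}+\varphi_2^{\mathrm{zak}}\in\pi\Z$ together with its continuity in $\kappa_1,\kappa_2$ and hypothesis \eqref{eq:trivial} to conclude $\varphi_1^{\mathrm{zak}}=-\varphi_2^{\mathrm{zak}}$ identically; the stated identities then follow by composing. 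Your argument instead keeps the band index fixed, reduces everything to the symmetries $C_{12}^{-\alpha}=\overline{C_{12}^\alpha}$, $C^{v,\alpha}(\kappa_2,\kappa_1)=JC^{v,-\alpha}(\kappa_1,\kappa_2)J$, $C^{v,\alpha}(\overline{\kappa_1},\overline{\kappa_2})=\overline{C^{v,-\alpha}(\kappa_1,\kappa_2)}$, and localizes the entire role of \eqref{eq:trivial} in the vanishing of a single geometric winding number $w_C$ of $C_{12}^\alpha$, evaluated in the Hermitian reference. This buys something concrete: you avoid the continuity-in-$(\kappa_1,\kappa_2)$ deformation entirely (your $w_C$ is $\kappa$-independent by construction), and you expose that the conjugation identity needs no gauge correction at all. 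The price is that the band-labelling bookkeeping you defer to the end is not cosmetic: with the branch conventions of \Cref{lem:modes}, swapping $\kappa_1\leftrightarrow\kappa_2$ at fixed square-root branch sends the eigenvector phase of band $j$ to (a multiple of) that of band $3-j$ — this is exactly why the paper's intermediate relations cross the band index — so you must verify that your ``band $j$ maps to band $j$'' convention is propagated consistently through both identities before combining them for the ``in particular'' statement.

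The one step I would call a genuine gap is your evaluation of $w_j$. You claim the numerator $N^\alpha=C_{11}^\alpha(\kappa_1-\kappa_2)+(-1)^j\sqrt{\,\cdot\,}$ has zero winding because the radicand ``is confined to a line segment.'' It is not: the radicand is $\left(\tfrac{\kappa_1-\kappa_2}{2}\right)^2(C_{11}^\alpha)^2+\kappa_1\kappa_2|C_{12}^\alpha|^2$, a combination of two \emph{fixed} complex numbers with two independently varying nonnegative real coefficients, hence a curve in a closed convex cone, not a segment. Separability gives that this curve avoids the origin, and one can argue zero winding of the radicand (and hence of a continuous branch of its square root) when that cone is proper; but zero winding of $N^\alpha$ itself does not follow immediately, since $N^\alpha$ is a sum of the root with a term on a different ray. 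The clean way to finish is the identity $N_1^\alpha N_2^\alpha=-4\kappa_1\kappa_2|C_{12}^\alpha|^2$, which shows the windings of $N_1^\alpha$ and $N_2^\alpha$ sum to zero and that neither factor vanishes (for $C_{12}^\alpha\neq0$); you then still need one more observation to split this into $w(N_1^\alpha)=w(N_2^\alpha)=0$ individually. Without that, your formula $\varphi_j^{\mathrm{zak}}(\kappa_2,\kappa_1)=-\varphi_j^{\mathrm{zak}}(\kappa_1,\kappa_2)-2\pi w_j$ is correct but $w_j=w_C$ is unproven, and \eqref{eq:trivial} only controls $w_C$. The rest of the proposal — the capacitance-matrix symmetries, the gauge-transformation formula for the shift of the Zak phase, and the Hermitian-limit computation $\varphi_j^{\mathrm{zak}}(\Re(\kappa_1),\Re(\kappa_2))=-\pi w_C$ — is sound.
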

	\begin{proof}
		From Lemmas \ref{lem:modes} and \ref{lem:phase}, it is straightforward to show that $\varphi_1^{\mathrm{zak}}(\kappa_1,\kappa_2) = \varphi_2^{\mathrm{zak}}(\kappa_2,\kappa_1) + O(\delta)$ and that $\varphi_1^{\mathrm{zak}}(\overline{\kappa_1}, \overline{\kappa_2}) = -\varphi_2^{\mathrm{zak}}(\kappa_1, {\kappa_2})+ O(\delta)$.
		It is well-known that the ``total Zak phase'' $\varphi_{\mathrm{tot}}^{\mathrm{zak}} := \varphi_1^{\mathrm{zak}} + \varphi_2^{\mathrm{zak}}$ can only attain discrete multiples of $\pi$ \cite{jiang2018topological}. Since $\varphi_{\mathrm{tot}}^{\mathrm{zak}} $ is continuous for $\kappa_1, \kappa_2 \in \mathbb{C}$, it follows from \eqref{eq:trivial} that $\varphi_{\mathrm{tot}}^{\mathrm{zak}} = 0$ for all $\kappa_1, \kappa_2 \in \mathbb{C}$. Consequently, $\varphi_1^{\mathrm{zak}} = -\varphi_2^{\mathrm{zak}}$, and it follows that $\varphi_j^{\mathrm{zak}}(\kappa_1, \kappa_2) = - \varphi_j^{\mathrm{zak}}(\kappa_2, \kappa_1) + O(\delta)$ and $\varphi_j^{\mathrm{zak}}(\overline{\kappa_1}, \overline{\kappa_2}) = \varphi_j^{\mathrm{zak}}(\kappa_1, \kappa_2) + O(\delta)$. 
		\end{proof}
	\begin{rmk}
		\Cref{prop:zak} provides intuition on how to create structures supporting edge modes. In the Hermitian case, the bulk-boundary correspondence indicates that edge modes will exist when joining two materials with distinct Zak phases. Unlike the Hermitian case, the non-Hermitian Zak phase is not quantized. \Cref{prop:zak} shows that distinct Zak phases can, in general, also be achieved by swapping $\kappa_1$ and $\kappa_2$ while keeping $d$ fixed. Clearly, this is a purely non-Hermitian effect, which disappears in the Hermitian limit as $\mathrm{Im}(\kappa_1), \mathrm{Im}(\kappa_2) \rightarrow 0$. 
	\end{rmk}
\begin{rmk}
	The total Zak phase $\varphi_{\mathrm{tot}}^{\mathrm{zak}}$ is known to be quantized and, under assumption \eqref{eq:trivial}, vanishes even for complex $\kappa_1, \kappa_2$. As we shall see, however, this quantized number fails to predict the existence of edge modes. In the Hermitian case, a nonzero Zak phase is equivalent to an ``inverted'' band structure. The fact that the Zak phase is quantized originates from the fact that the eigenmodes are purely monopole and dipole modes at $\alpha = 0$ and $\alpha = \pi/L$. A  non-integer value of the Zak phase can be attributed to a ``partial'' band inversion, where the eigenmodes wind around some point in the complex plane, and (due to non-Hermiticity) are expressed as (complex) linear combinations of monopole and dipole modes. Swapping the values of $\kappa_1$ and $\kappa_2$ swaps the sign of $\varphi_j^\mathrm{zak}$, corresponding to a reverse winding.
\end{rmk}

	\Cref{fig:invsym,fig:inv} demonstrate the non-Hermitian band inversion in the case of low and high gain/loss, respectively. Here, the phase factor of the eigenmodes, $e^{-\iu\phi_j}$, is shown in the complex plane as $\alpha$ varies across the Brillouin zone $Y^*$. In the cases without $\P\T$-symmetry, \ie{}, $\kappa_1\neq \overline{\kappa_2}$, the phase factor has a nonzero winding around some point in the complex plane, measured by the Zak phase. Due to non-Hermiticity, the phase factor does not vary between $-1$ and $1$, resulting in a non-quantized Zak phase. 
	
	In the case of unbroken $\P\T$-symmetry, the phase factor is confined to the unit circle and has zero winding with respect to any point in $\mathbb{C}$ (\Cref{fig:inv0sym}). This corresponds to zero Zak phase. In the case of broken $\P\T$-symmetry, the two bands are degenerate and the Zak phase is undefined. Nevertheless, the phases are no longer confined to the unit circle, and show a similar behaviour to the cases without $\P\T$-symmetry (\Cref{fig:inv0}). 

	\begin{figure}[h]	
	\begin{center}
		\begin{subfigure}[b]{0.3\linewidth} 
			\centering
			\includegraphics[width=0.9\linewidth]{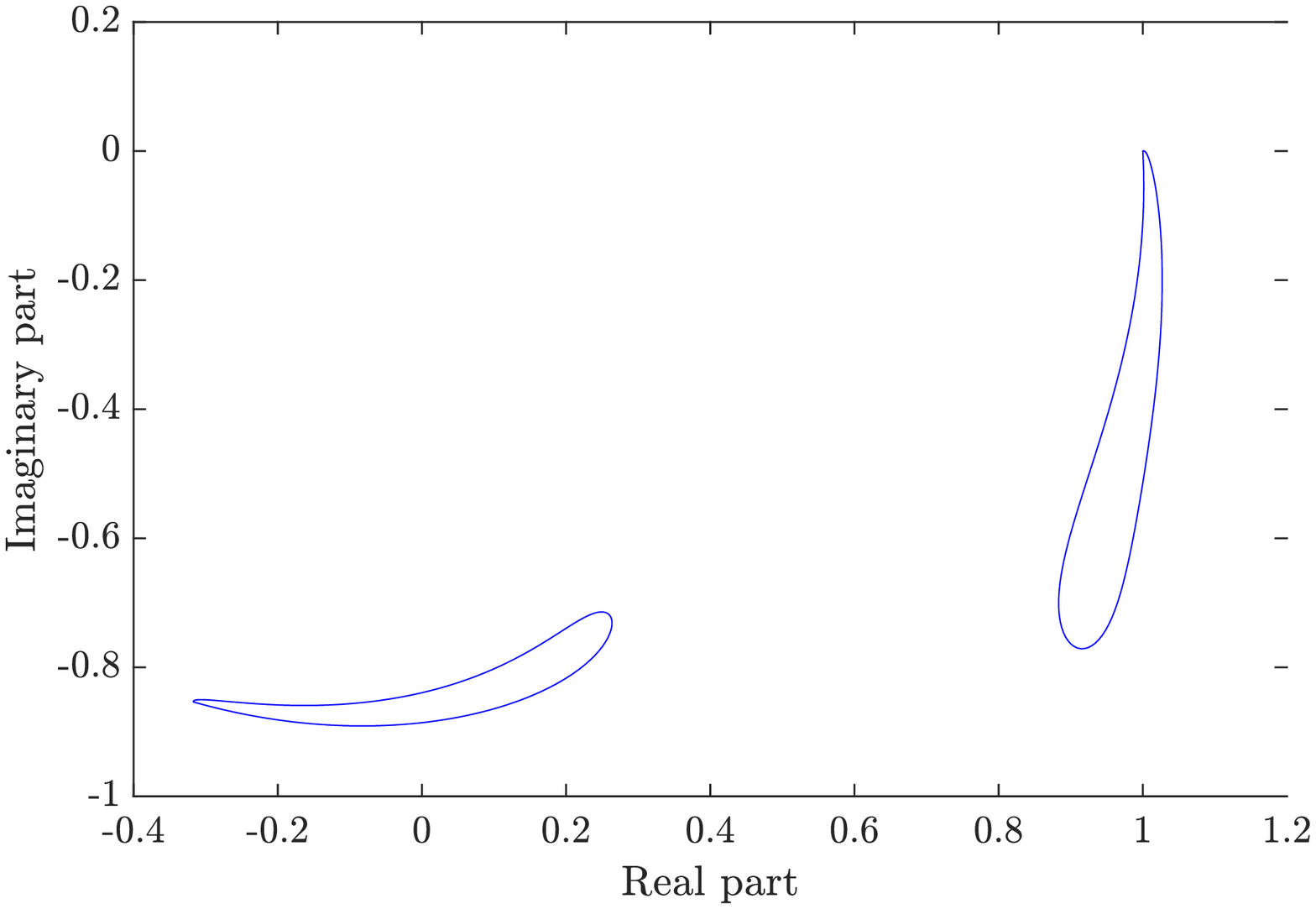}
			\caption{$\kappa_1 = 1+0.8\iu, \kappa_2 = 1-0.6\iu$}
		\end{subfigure}
		\hspace{0.1cm}
		\begin{subfigure}[b]{0.3\linewidth}
			\centering
			\includegraphics[width=0.9\linewidth]{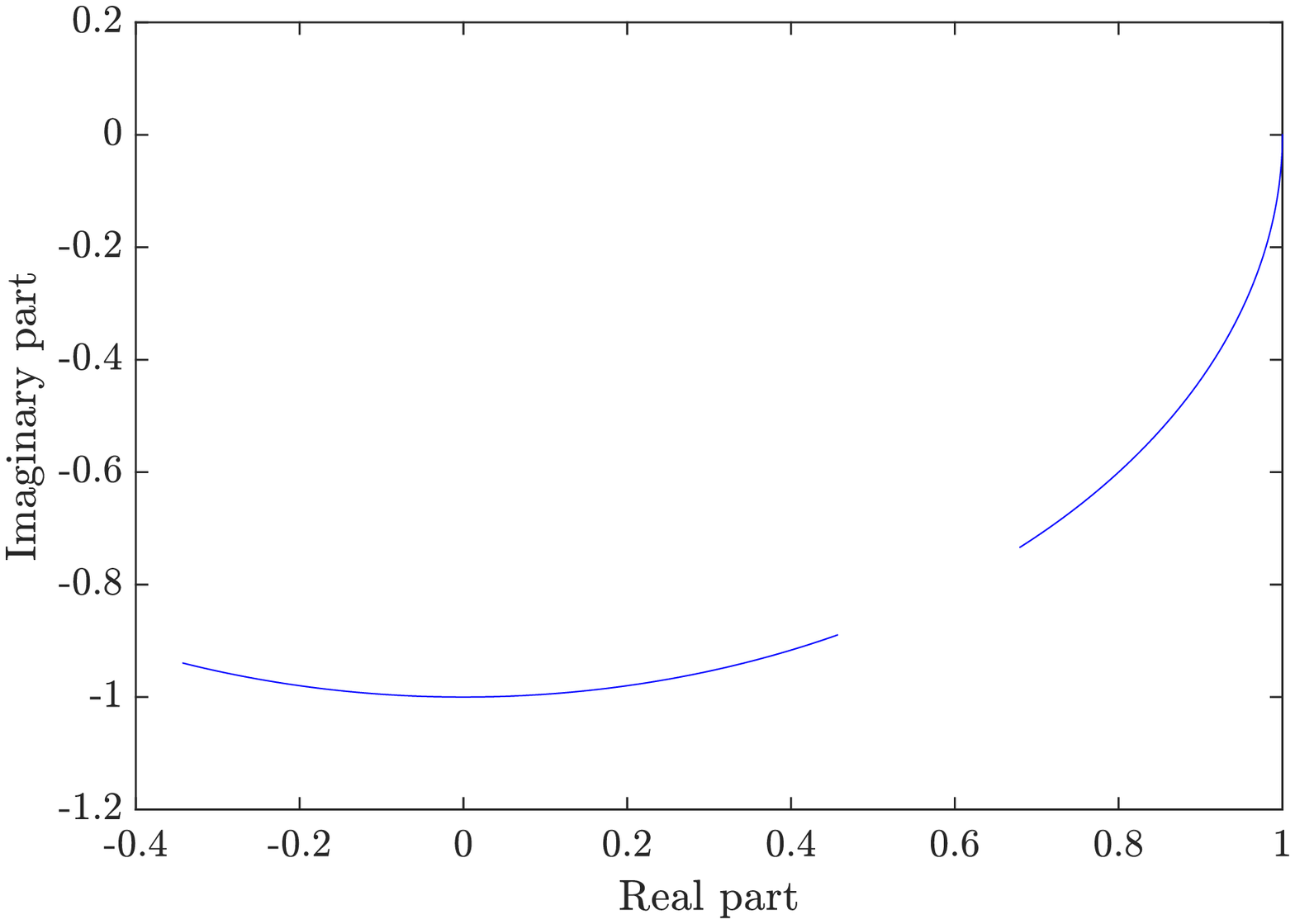}
			\caption{$\kappa_1 = 1+0.7\iu, \kappa_2 = 1-0.7\iu$} \label{fig:inv0sym}
		\end{subfigure}
		\hspace{0.1cm}
		\begin{subfigure}[b]{0.3\linewidth}
			\centering
			\includegraphics[width=0.9\linewidth]{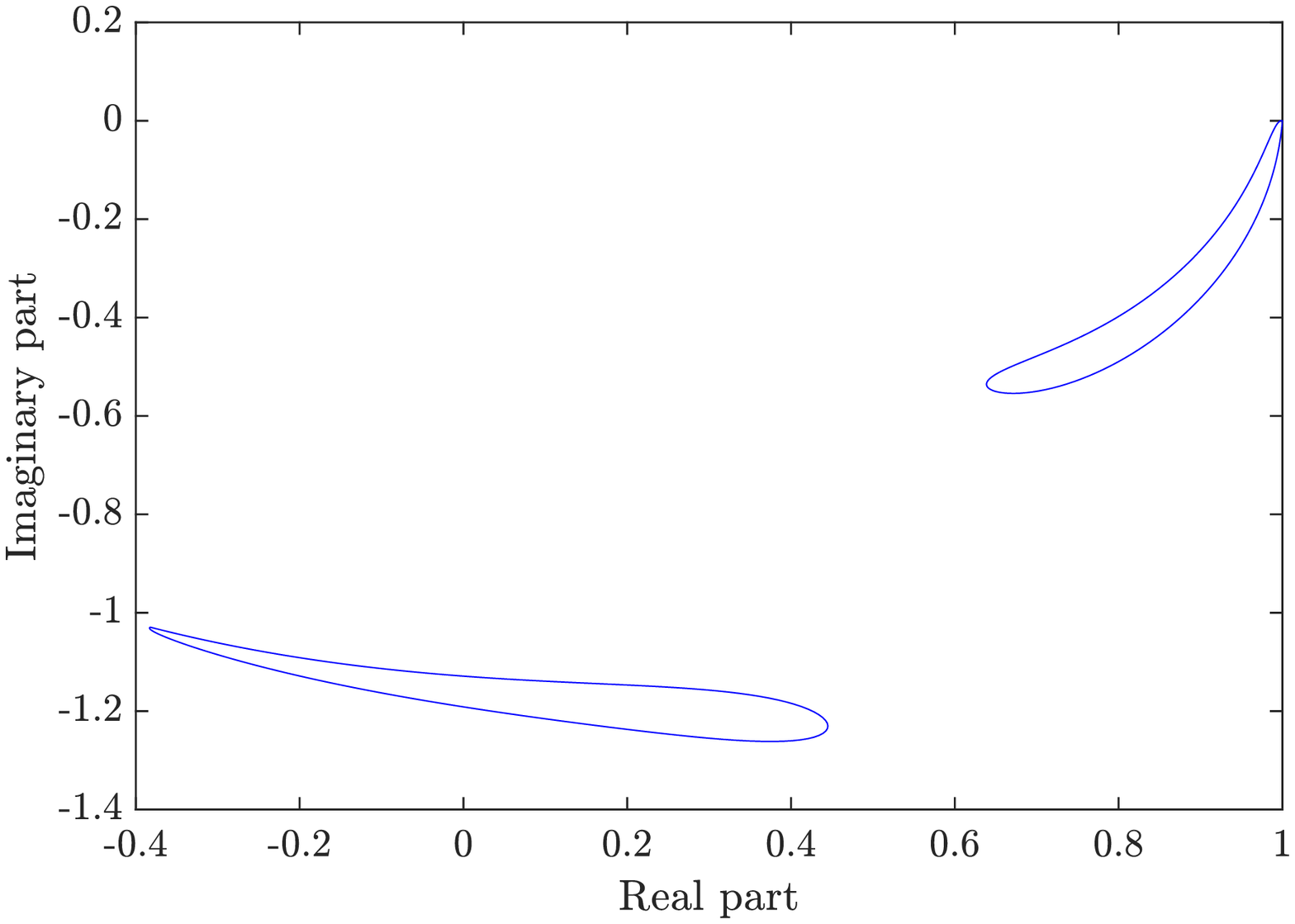}
			\caption{$\kappa_1 = 1+0.6\iu, \kappa_2 = 1-0.8\iu$}
		\end{subfigure}
	\end{center}
	\caption{Traces of the phase factor $e^{-\iu\phi_j}$ of the eigenmodes in the complex plane, demonstrating the non-Hermitian band inversion with low gain/loss (corresponding to unbroken $\P\T$ symmetry in \Cref{fig:inv0sym}).} \label{fig:invsym}
\end{figure}
	\begin{figure}[h]	
	\begin{center}
		\begin{subfigure}[b]{0.3\linewidth} 
			\centering
			\includegraphics[width=0.9\linewidth]{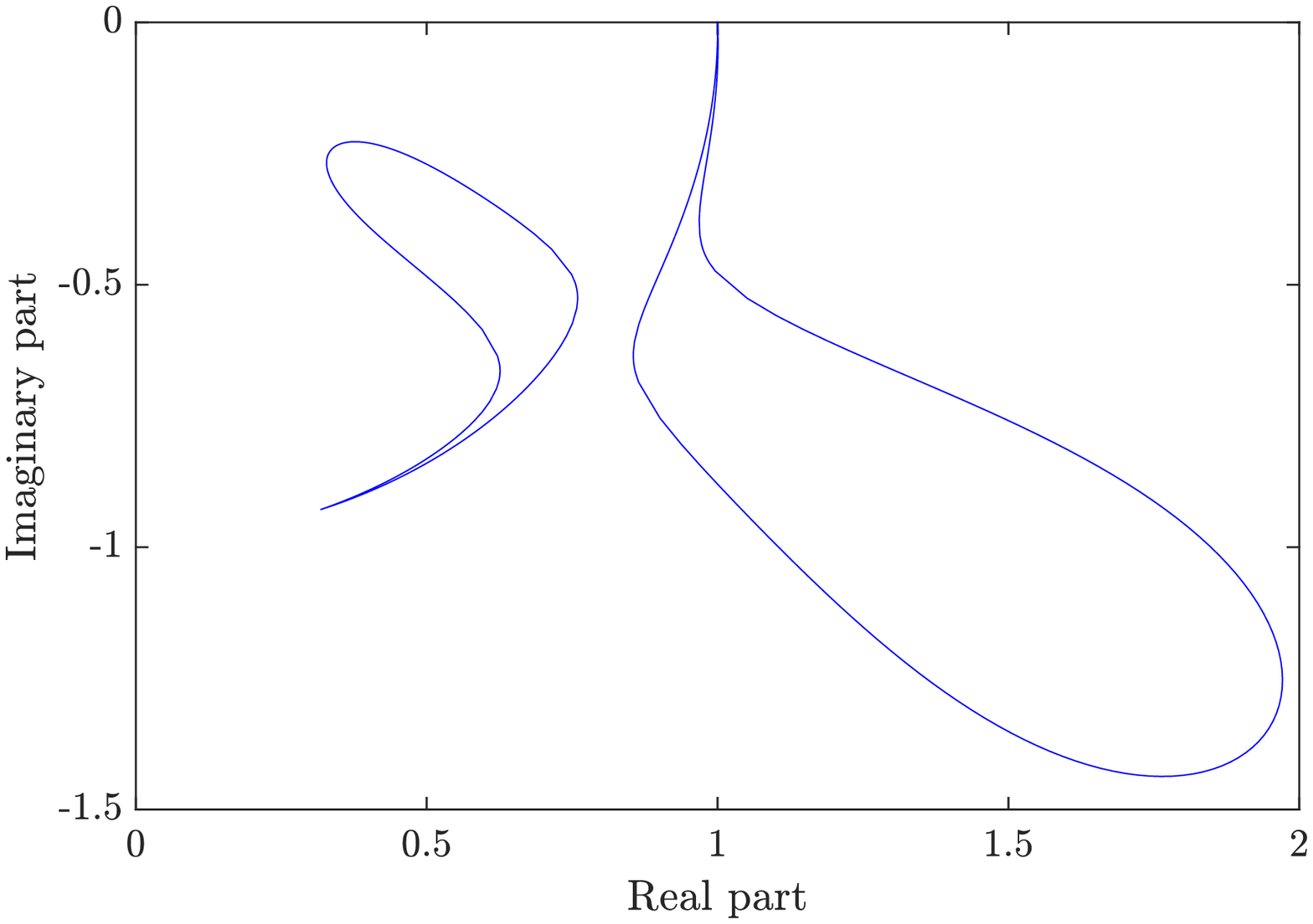}
			\caption{$\kappa_1 = 1+1.38\iu, \kappa_2 = 1-1.42\iu$}
		\end{subfigure}
		\hspace{0.1cm}
		\begin{subfigure}[b]{0.3\linewidth}
			\centering
			\includegraphics[width=0.9\linewidth]{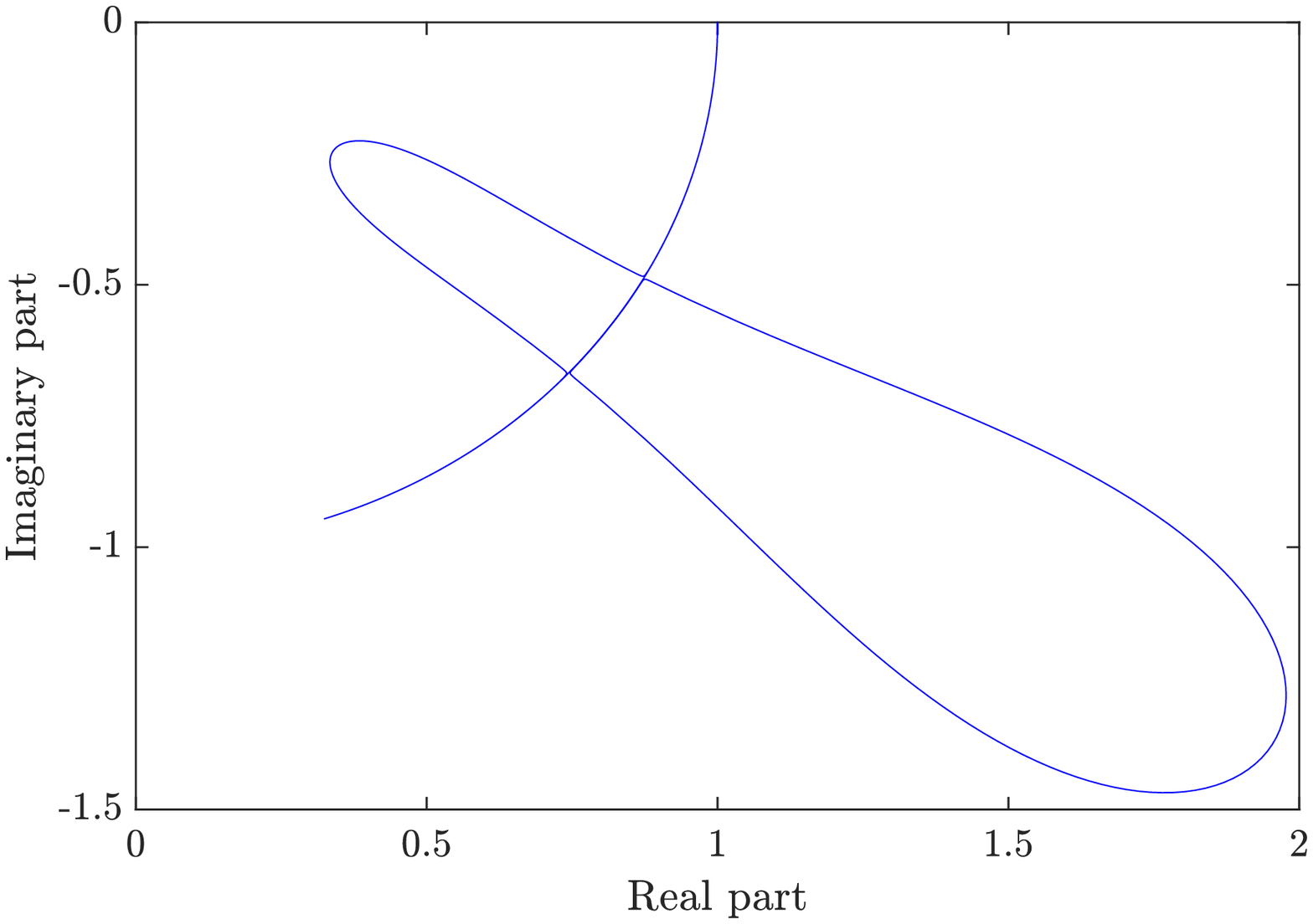}
			\caption{$\kappa_1 = 1+1.4\iu, \kappa_2 = 1-1.4\iu$} \label{fig:inv0}
		\end{subfigure}
		\hspace{0.1cm}
		\begin{subfigure}[b]{0.3\linewidth}
			\centering
			\includegraphics[width=0.9\linewidth]{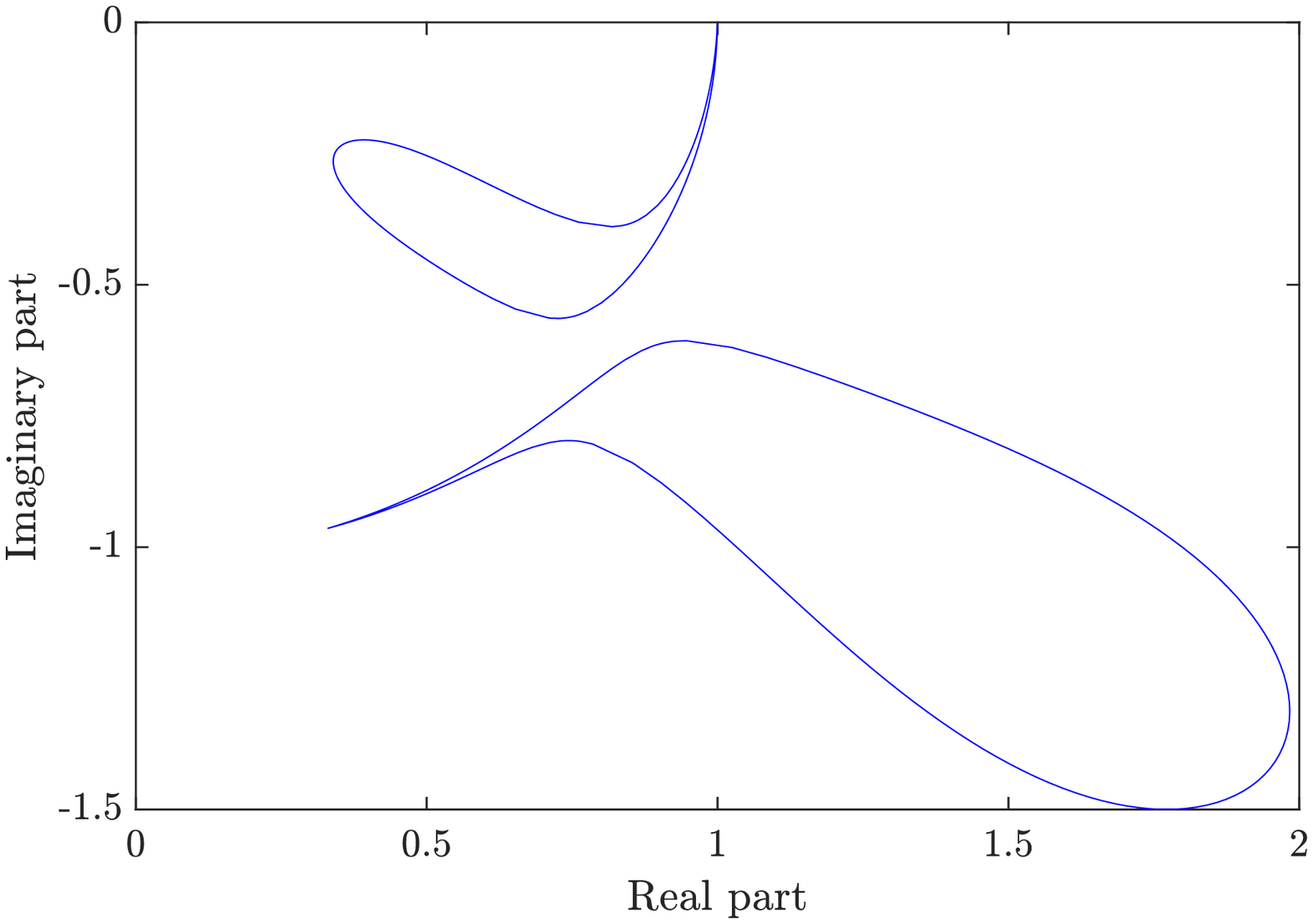}
			\caption{$\kappa_1 = 1+1.42\iu, \kappa_2 = 1-1.38\iu$}
		\end{subfigure}
	\end{center}
\caption{Traces of the phase factor $e^{-\iu\phi_j}$ of the eigenmodes in the complex plane, demonstrating the non-Hermitian band inversion high low gain/loss (corresponding to broken $\P\T$ symmetry in \Cref{fig:inv0}).}\label{fig:inv}
\end{figure}

	\section{Localized modes by material-parameter defects} \label{sec:first}
	In this section, we study edge-modes in active subwavelength metamaterials with a defect in the gain/loss parameter but with periodic geometry.
	
	\subsection{Preliminary lemma}
	We begin the analysis with a general lemma that describes the subwavelength eigenmodes of a subwavelength resonator. We consider a single resonator $\Omega$ which is a connected domain $\Omega \in \R^3$ such that $\p \Omega$ is of Hölder class $C^{1,s}$ for some $0<s<1$. We denote the bulk modulus and density in $\Omega$ by $\kappa_b$, $\rho_b$ and assume there is a neighbourhood $U\subset \R^3$, $\Omega\subset U$ with material parameters $\kappa, \rho$. We introduce the parameters
	$$v_b = \sqrt{\frac{\kappa_b}{\rho_b}}, \quad v = \sqrt{\frac{\kappa}{\rho}}, \quad \delta = \frac{\rho_b}{\rho}, \quad k = \frac{\omega}{v}, \quad k_b = \frac{\omega}{v_b}.$$	
	We study the solutions to the problem
	\begin{equation} \label{eq:local_PDE}
	\left\{
	\begin{array} {ll}
	\ds \Delta {u}+ k^2 {u}  = 0 & \text{in } U \setminus \Omega, \\[0.3em]
	\ds \Delta {u}+ k_b^2 {u}  = 0 & \text{in } \Omega, \\
	\nm
	\ds  {u}|_{+} -{u}|_{-}  = 0  & \text{on } \partial \Omega, \\
	\nm
	\ds  \delta \frac{\partial {u}}{\partial \nu} \bigg|_{+} - \frac{\partial {u}}{\partial \nu} \bigg|_{-} = 0 & \text{on } \partial \Omega.
	\end{array}
	\right.
	\end{equation}
	The following result shows a fundamental property of high-contrast subwavelength resonators. Intuitively, the idea is that as $\delta \rightarrow 0$, the limiting problem is a homogeneous Neumann problem inside $\Omega$.
	\begin{lem}\label{lem:constant}
		As $\delta \rightarrow 0$, any solution $u$ to \eqref{eq:local_PDE} with $\omega = O(\sqrt{\delta})$ satisfies
		$$u(x) = u_\Omega\big(1 + O(\delta)\big), \quad x\in \Omega,$$
		for some constant $u_\Omega$. 
	\end{lem}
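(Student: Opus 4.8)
The plan is to represent the interior field by a single-layer potential and reduce the transmission condition on $\p\Omega$ to a statement about the Neumann--Poincar\'e operator at zero frequency. Since $\omega = O(\sqrt{\delta})$ and $v_b = O(1)$, the interior wavenumber satisfies $k_b = \omega/v_b = O(\sqrt{\delta})$, so $k_b^2 = O(\delta)$ and $k_b^2$ is not a Dirichlet eigenvalue of $-\Delta$ on $\Omega$ for all $\delta$ small. Hence I may write $u(x) = \S_\Omega^{k_b}[\psi](x)$ for $x\in\Omega$ and some density $\psi\in L^2(\p\Omega)$, which I normalize so that $\|\psi\|_{L^2(\p\Omega)} = 1$. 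The jump relation gives $\p u/\p\nu|_- = (-\tfrac{1}{2} I + \K_\Omega^{k_b,*})[\psi]$, and the transmission condition $\delta\,\p u/\p\nu|_+ = \p u/\p\nu|_-$ turns this into $(-\tfrac{1}{2} I + \K_\Omega^{k_b,*})[\psi] = \delta\,\p u/\p\nu|_+ =: g$, where $\|g\|_{L^2(\p\Omega)} = O(\delta)$ by a standard trace estimate for the (normalized) exterior field.

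Next I would exploit the low-frequency expansion $G^{k}(x,y) = G^0(x,y) - \iu k/(4\pi) + O(k^2)$. The first-order term is a constant in $x$, whose normal derivative vanishes, so $\K_\Omega^{k_b,*} = \K_\Omega^{0,*} + O(k_b^2) = \K_\Omega^{0,*} + O(\delta)$ in operator norm. Together with $\|\psi\| = 1$, the boundary equation reduces to $(-\tfrac{1}{2} I + \K_\Omega^{0,*})[\psi] = O(\delta)$. The operator $-\tfrac{1}{2}I + \K_\Omega^{0,*}$ is Fredholm of index zero with a one-dimensional kernel (since $\p\Omega$ is connected), spanned by a density $\psi_0$, and is boundedly invertible on a complement of this kernel. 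Decomposing $\psi = c\,\psi_0 + \psi_\perp$ and inverting on the complement yields $\psi_\perp = O(\delta)$, so $\psi = c\,\psi_0 + O(\delta)$ with $|c|$ bounded.

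The key geometric fact is that $\S_\Omega^0[\psi_0]$ is constant inside $\Omega$: since $\psi_0\in\ker(-\tfrac{1}{2}I + \K_\Omega^{0,*})$, the harmonic function $\S_\Omega^0[\psi_0]$ has vanishing interior Neumann trace and is therefore constant on the connected set $\Omega$; this constant $b$ is nonzero (otherwise $\S_\Omega^0[\psi_0]$ would vanish in $\Omega$ and, by continuity and exterior decay, everywhere, forcing $\psi_0 = 0$). Using the expansion $\S_\Omega^{k_b} = \S_\Omega^0 - (\iu k_b/4\pi)\int_{\p\Omega}\,\cdot\,\dx\sigma + O(k_b^2)$ once more, I obtain for $x\in\Omega$
\[
u(x) = c\,\S_\Omega^0[\psi_0](x) + \S_\Omega^0[\psi_\perp](x) - \frac{\iu k_b}{4\pi}\int_{\p\Omega}\psi\,\dx\sigma + O(\delta) = u_\Omega + O(\delta),
\]
where $u_\Omega := c\,b - (\iu k_b/4\pi)\int_{\p\Omega}\psi\,\dx\sigma$ is constant in $x$; here $\|\S_\Omega^0[\psi_\perp]\|_\infty = O(\delta)$ and the remaining remainders are $O(\delta)$. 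Since the solution scale is controlled by $|c|\,|b| = O(1)$, the $x$-dependent remainder is $O(\delta)$ relative to $u_\Omega$, which is exactly the claim $u(x) = u_\Omega(1 + O(\delta))$.

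The main obstacle is obtaining the \emph{sharp} $O(\delta)$ rather than the $O(\sqrt{\delta})$ that a naive energy estimate (testing the interior equation against $\bar u$ and using $\p u/\p\nu|_- = O(\delta)$) would produce. The resolution rests on two points I would stress: (i) the $O(\sqrt{\delta})$ first-order term in the single-layer expansion is constant in $x$ and is therefore absorbed into $u_\Omega$ without disturbing the spatial constancy; and (ii) the leading correction to $\K_\Omega^{k_b,*}$ is genuinely $O(\delta)$, because the constant first-order term of $G^k$ drops out under the normal derivative. Carefully distinguishing which remainders contribute to the $x$-dependence from those that merely shift the constant $u_\Omega$ is the crux; the uniform bound $\|g\|_{L^2(\p\Omega)} = O(\delta)$ is a secondary, routine ingredient.
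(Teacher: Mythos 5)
Your argument follows essentially the same route as the paper's proof: represent $u$ in $\Omega$ by a single layer potential, use the jump relations and the transmission condition to reduce to $\left(-\tfrac{1}{2}I + \K_\Omega^{0,*}\right)[\psi] = O(\delta)$, identify the leading part of the density with the kernel element whose zero-frequency single layer potential is constant inside $\Omega$, and control the remaining low-frequency corrections. You are in fact slightly more explicit than the paper on one point worth keeping, namely that the $O(k_b)=O(\sqrt{\delta})$ term in the expansion of $\S_\Omega^{k_b}$ is constant in $x$ and can be absorbed into $u_\Omega$, so it does not spoil the claimed $O(\delta)$ relative error.
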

	\begin{proof}
		The solution $u$ can be represented as
		$$u(x) = \begin{cases} 
		\S_\Omega^{k_b}[\phi^\mathrm{in}](x), \quad &x \in \Omega,  \\ H(x) + \S_\Omega^{k}[\phi^\mathrm{out}](x), \quad &x \in U\setminus \overline \Omega, \end{cases}$$
		for some function $H$ satisfying $\Delta H + k^2 H = 0$ in  $U$. Clearly, if $u(x)$ is a solution, then $cu(x)$ is also a solution for any $c\in \mathbb{C}$, so we can assume that
		$$\|\phi^\mathrm{in}\|_{L^2(\p v)} = O(1), \quad  \|\phi^\mathrm{out}\|_{L^2(\p \Omega)} = O(1), \quad \|H\|_{H^1(\p \Omega)} = O(1)$$
		as $\delta \rightarrow 0$. From the boundary conditions, and using the jump relations \eqref{eq:jump1} and \eqref{eq:jump2}, we have 
		$$\left(-\frac{1}{2}I + \K_\Omega^{k_b,*}\right)[\phi^\mathrm{in}] = O(\delta).$$
		It then follows that $\phi^\mathrm{in} = \psi + O(\delta)$ for some $\psi \in \ker\left(-\frac{1}{2}I + \K_\Omega^{0,*}\right)$. It is well-known that $\S_\Omega^0[\psi](x)$ is constant for $x\in \Omega$ \cite{MaCMiPaP}. Moreover, from the low-frequency expansion \eqref{eq:Sexp} we have
		$$\S_\Omega^{k_b}[\phi^\mathrm{in}] = \S_\Omega^0[\psi] + O(\delta),$$
		which proves the claim.
	\end{proof}

		 	\begin{figure}[tbh]
		\centering
		\begin{tikzpicture}[scale=1.5]
		\pgfmathsetmacro{\rb}{0.25pt}
		\pgfmathsetmacro{\rs}{0.2pt}
		\coordinate (a) at (0.25,0);		
		\coordinate (b) at (1.05,0);	
		
		\draw (-0.5,0.85) -- (-0.5,-1);
		\def\Done{ plot [smooth cycle] coordinates {($(a)+(210:\rb)$) ($(a)+(270:\rs)$) ($(a)+(330:\rb)$) ($(a)+(30:\rs)$) ($(a)+(90:\rb)$) ($(a)+(150:\rs)$) 
		}};
		\def\Dtwo{ plot [smooth cycle] coordinates {($(b)+(30:\rb)$) ($(b)+(90:\rs)$) ($(b)+(150:\rb)$) ($(b)+(210:\rs)$) ($(b)+(270:\rb)$) ($(b)+(330:\rs)$) }};
		\draw \Done;
		\draw \Dtwo;
		\pattern[pattern=north east lines, opacity=0.3] \Done;
		\pattern[pattern=crosshatch dots, opacity=0.3] \Dtwo;
		\draw (a) node{$\kappa_2$};
		\draw (b) node{$\kappa_1$};
		\draw (0.65,0.9) node{$m=1$};

		\begin{scope}[xshift=-2.3cm]
		\coordinate (a) at (0.25,0);		
		\coordinate (b) at (1.05,0);	
		\draw[dashed, opacity=0.5] (-0.5,0.85) -- (-0.5,-1);		
		\draw \Done;
		\draw \Dtwo;
		\pattern[pattern=north east lines, opacity=0.3] \Dtwo;
		\pattern[pattern=crosshatch dots, opacity=0.3] \Done;
		\draw (a) node{$\kappa_1$};
		\draw (b) node{$\kappa_2$};
		\draw (0.65,0.9) node{$m=0$};
		\end{scope}
		\begin{scope}[xshift=-4.6cm]
		\coordinate (a) at (0.25,0);		
		\coordinate (b) at (1.05,0);	
		\draw \Done;
		\draw \Dtwo;
		\pattern[pattern=north east lines, opacity=0.3] \Dtwo;
		\pattern[pattern=crosshatch dots, opacity=0.3] \Done;
		\draw (a) node{$\kappa_1$};
		\draw (b) node{$\kappa_2$};
		\draw (0.65,0.9) node{$m=-1$};
		\begin{scope}[xshift = 1.2cm]
		\draw (-1.6,0) node{$\cdots$};
		\end{scope};
		\end{scope}
		\begin{scope}[xshift=2.3cm]
		\coordinate (a) at (0.25,0);		
		\coordinate (b) at (1.05,0);	
		\draw[dashed, opacity=0.5] (-0.5,0.85) -- (-0.5,-1);
		\draw \Done;
		\draw \Dtwo;
		\pattern[pattern=north east lines, opacity=0.3] \Done;
		\pattern[pattern=crosshatch dots, opacity=0.3] \Dtwo;
		\draw (a) node{$\kappa_2$};
		\draw (b) node{$\kappa_1$};
		\draw (0.65,0.9) node{$m=2$};
		\draw (1.7,0) node{$\cdots$};
		\end{scope}
		\end{tikzpicture}
		\caption[Single]{Illustration of the edge. The special case $\kappa_1 = \overline{\kappa_2}$ corresponds to local $\P \T$ symmetry. Legend:
			\raisebox{-2pt}{\tikz{
					\draw[pattern=crosshatch dots, opacity=0.3] (0,0) circle (5pt); 
					\draw (0,0) circle (5pt);}} bulk modulus $\kappa_1$,
			\raisebox{-2pt}{\tikz{
					\draw[pattern=north east lines, opacity=0.3] (0,0) circle (5pt); 
					\draw (0,0) circle (5pt);}} bulk modulus $\kappa_2$. \label{fig:edge} }
	\end{figure}	

	\subsection{Localized modes}
	We will begin this section by deriving an asymptotic eigenvalue problem that characterises localized modes in the structure defined in \Cref{sec:prelim} with a general distribution of the bulk moduli. Then, we will compute an asymptotic formula for resonant frequencies of localized modes, and corresponding decay lengths, of the defect structure illustrated in \Cref{fig:edge}.
	
	Assume that $u$ is a simple localized eigenmode to \eqref{eq:scattering} in the subwavelength regime, \ie{}, $u$ corresponds to a simple eigenvalue $\omega$ which scales as $O(\delta^{1/2})$. Here, we refer to localization in the $L^2(\R)$-sense, \ie{} $\int_{\R}|u(x_1,x_2,x_3)|^2 \dx x_1 < \infty$ for all $x_2, x_3$. In addition, we assume that $u$ is normalized as $\int_{\R}|u(x_1,0,0)|^2 \dx x_1 = 1$. By \Cref{lem:constant}, we have
	$$u(x) = u_i^m +O(\delta), \quad x\in D_i^m,$$
	for some constant values $u_i^m, i=1,2, m\in \Z$.
	
	Since $\omega$ is subwavelength, we can write
	\begin{equation}\label{eq:omega}
	\omega = \omega_0 + O(\delta), \quad \omega_0 = \beta \delta^{1/2},
	\end{equation}
	for some constant $\beta$. The first proposition holds for general values of $\kappa_i^m$.
	\begin{prop} \label{prop:loc_main}
	Any localized solution $u$ to \eqref{eq:scattering}, corresponding to a subwavelength frequency $\omega$, satisfies the equation
	\begin{equation} \label{eq:main}
	\frac{1}{\rho}\begin{pmatrix} C_{11}^\alpha & C_{12}^\alpha \\[0.3em] C_{21}^\alpha & C_{22}^\alpha\end{pmatrix} \begin{pmatrix}\ds \sum_{m\in \Z} u_1^me^{\iu \alpha mL} \\[0.3em] \ds \sum_{m\in \Z} u_2^me^{\iu \alpha mL} \end{pmatrix} = \mu\begin{pmatrix} \ds \sum_{m\in \Z} \frac{u_1^me^{\iu \alpha mL}}{\kappa_1^m}  \\[0.3em]  \ds \sum_{m\in \Z} \frac{u_2^me^{\iu \alpha mL}}{\kappa_2^m} \end{pmatrix}, \qquad \mu = \omega_0^2|D_1|.
	\end{equation}
	\end{prop}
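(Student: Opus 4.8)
The plan is to derive the eigenvalue equation \eqref{eq:main} by combining the layer-potential representation of $u$ with the high-contrast asymptotics of \Cref{lem:constant}, and then reformulate the resulting relations in terms of the capacitance coefficients. The starting point is to represent $u$ using single-layer potentials on the full (non-periodic) crystal $\C$. Since the geometry is periodic but the moduli $\kappa_i^m$ vary with $m$, I cannot directly Floquet-transform the PDE; instead I will work on the crystal directly and encode the periodicity of the \emph{geometry} through the quasiperiodic capacitance coefficients $C_{ij}^\alpha$ by taking a discrete Floquet transform of the coefficient sequences $u_i^m$.

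First I would write $u = \S_\C^{k}[\phi^{\mathrm{out}}]$ outside the resonators and $u = \hat\S_{\C}^{\omega}[\phi^{\mathrm{in}}]$ inside, apply the transmission conditions from \eqref{eq:scattering}, and use the jump relations to obtain the boundary-integral system analogous to the operator $\A^\alpha(\omega,\delta)$ appearing in the proof of \Cref{prop:vorticity}. The key structural input is \Cref{lem:constant}: to leading order $u$ is constant $u_i^m$ on each resonator $D_i^m$, so the interior density $\phi^{\mathrm{in}}$ lies (to leading order) in $\ker(-\tfrac12 I + \K^{0,*})$ on each component, and the whole problem collapses onto the finite data $\{u_i^m\}$. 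The plan is then to extract a \emph{solvability} condition. Concretely, integrating the flux-continuity condition $\delta\,\partial u/\partial\nu|_+ = \partial u/\partial\nu|_-$ over each $\partial D_i^m$ and using the divergence theorem relates the exterior normal flux (which, via the harmonic extension governed by \eqref{eq:V_quasi}, is expressed through the $C_{ij}$) to the interior Helmholtz balance $\int_{D_i^m}(\Delta u + (k_i^m)^2 u)=0$. The interior equation $\Delta u + (k_i^m)^2 u = 0$ integrated over $D_i^m$ yields, using $(k_i^m)^2 = \omega^2\rho_b/\kappa_i^m$ and $u\approx u_i^m$, a term proportional to $\omega_0^2 |D_1| u_i^m/\kappa_i^m$, which is where the right-hand side of \eqref{eq:main} comes from.

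To convert the exterior-flux side into the capacitance matrix, I would observe that the leading-order exterior field is the harmonic function taking the boundary values $u_i^m$, i.e. $u \approx \sum_{i,m} u_i^m \tilde V_i^m$ on $\C^c$, where $\tilde V_i^m$ are the non-periodic analogues of the $V_j^\alpha$ in \eqref{eq:V_quasi}. Applying the discrete Floquet transform $\hat u_i(\alpha) := \sum_{m} u_i^m e^{\iu\alpha mL}$ diagonalizes the translation-invariant geometry: the Dirichlet-to-Neumann data of the exterior harmonic extension becomes exactly multiplication by $(C_{ij}^\alpha)$, by the definition \eqref{eq:qp_capacitance} of $C_{ij}^\alpha$ as the energy inner product $\int_{Y\setminus D}\overline{\nabla V_i^\alpha}\cdot\nabla V_j^\alpha$. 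Collecting the flux balance on $D_1^m$ and $D_2^m$ separately, transforming in $m$, and dividing by $\rho$ produces precisely the left-hand side of \eqref{eq:main}, while the interior term produces the right-hand side with $\mu = \omega_0^2|D_1|$ as in \eqref{eq:omega}.

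The main obstacle will be rigorously justifying the passage to the capacitance matrix in the \emph{non-periodic} (defect) setting: because $\kappa_i^m$ depends on $m$, the field $u$ is genuinely not quasiperiodic, so the Floquet transform must be applied only to the geometric (translation-invariant) operators while keeping the modulus-weights $1/\kappa_i^m$ in physical space, which is exactly why the $1/\kappa_i^m$ factors remain \emph{inside} the sums on the right of \eqref{eq:main} rather than being pulled out as a matrix. I would handle this by establishing the identity first for the purely geometric Dirichlet-to-Neumann map (where quasiperiodicity is legitimate and \eqref{eq:qp_capacitance} applies termwise in $\alpha$), and only afterwards reinserting the moduli, which enter solely through the interior Helmholtz term and are therefore diagonal in $(i,m)$. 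The remaining estimates — that the $O(\delta)$ corrections from \Cref{lem:constant} and from the low-frequency expansion \eqref{eq:Sexp} do not affect the leading-order balance — are routine and I would defer them.
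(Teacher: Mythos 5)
Your proposal is correct and follows essentially the same route as the paper's proof: the key ingredients in both are \Cref{lem:constant} (constancy of $u$ on each resonator), the flux balance obtained by integrating the transmission condition over $\p D_i$ together with the divergence theorem applied to the interior Helmholtz equation (which produces the $\omega_0^2|D_1|u_i^m/\kappa_i^m$ terms), and the identification of the exterior Dirichlet-to-Neumann data with the quasiperiodic capacitance matrix $C_{ij}^\alpha$ after a Floquet transform in $m$. The only difference is one of ordering --- the paper Floquet-transforms the PDE first and then performs the flux balance on the quasiperiodic cell problem, whereas you balance fluxes cell by cell in physical space and transform the resulting discrete system --- and your observation that the $1/\kappa_i^m$ weights must stay inside the sums because only the geometry is translation-invariant is exactly the point the paper's transformed interior equation encodes.
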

	\begin{proof}	
	Taking the Floquet transform, we find from \eqref{eq:scattering} that 
	\begin{equation} \label{eq:scattering_quasi}
	\left\{
	\begin{array} {ll}
	\ds \Delta {u^\alpha}+ k^2 {u^\alpha}  = 0 & \text{in } Y \setminus D, \\
	\nm
	\ds \Delta {u^\alpha}+ \omega^2\rho_b\sum_{m\in \Z} \frac{e^{\iu \alpha mL}}{\kappa_i^m}u(x+mL)  = 0 & \text{in } D_i, \\
	\nm
	\ds  {u^\alpha}|_{+} -{u^\alpha}|_{-}  =0  & \text{on } \p D, \\
	\nm
	\ds  \delta \frac{\partial {u^\alpha}}{\partial \nu} \bigg|_{+} - \frac{\partial {u^\alpha}}{\partial \nu} \bigg|_{-} =0 & \text{on } \p D, \\
	\nm
	\ds u^\alpha(x+mL\w)= e^{\iu \alpha m} u^\alpha(x) & \text{for all } m \in \Z, \\
	\nm
	\ds u^\alpha(x_1,x_2,x_3)& \text{satisfies the $\alpha$-quasiperiodic outgoing radiation condition} \\ &\hspace{0.5cm} \text{as } \sqrt{x_2^2+x_3^2} \rightarrow \infty,
	\end{array}
	\right.
	\end{equation}
	where $u^\alpha(x) = \F[u](x,\alpha)$. Moreover inside $D_i$ we have, from \Cref{lem:constant}, that
	$$u^\alpha(x) = u_i^\alpha + O(\delta), \quad x\in D_i, \qquad u_i^\alpha = \sum_{m\in \Z}u_i^me^{\iu \alpha m L},$$
	for some sequences $u_i^m\in \ell^ 2(\Cb)$ for $i=1,2$. Observe, in particular, that $u_i^\alpha$ are constant in $x$. Following the arguments in \cite[Lemma 4.2]{ammari2020exceptional}, it then follows that 
	$$u^\alpha(x) = u_1^\alpha V_1^\alpha(x) + u_2^\alpha V_2^\alpha(x) + O(\delta^{1/2}), \quad x\in Y\setminus D.$$
	On one hand, using the transmission conditions and integration by parts, we obtain
	$$\int_{\p D_i} \frac{\p u^\alpha}{\p \nu}\bigg|_+ \dx \sigma = \frac{1}{\delta} \int_{\p D_i} \frac{\p u^\alpha}{\p \nu}\bigg|_- \dx \sigma = -\frac{\omega^2\rho_b}{\delta} \int_{D_i} \sum_{m\in \Z} \frac{e^{\iu \alpha mL}}{\kappa_i^m}u^\alpha(x) \dx x = -\frac{\omega^2\rho_b|D_i|}{\delta} \sum_{m\in \Z} \frac{u_i^m e^{\iu \alpha mL}}{\kappa_i^m} + O(\delta).$$
	On the other hand, we have
	$$\int_{\p D_i} \frac{\p u^\alpha}{\p \nu}\bigg|_+ \dx \sigma = u_1^\alpha \int_{\p D_i} \frac{\p V_1^\alpha}{\p \nu} \dx \sigma  + u_2^\alpha \int_{\p D_i} \frac{\p V_2^\alpha}{\p \nu} \dx \sigma  + O(\delta^{1/2}) =  -u_1^\alpha C_{i,1} -  u_2^\alpha C_{i,2} + O(\delta^{1/2}).$$
	Combining the above estimates, and using \eqref{eq:omega}, proves the result.
	\end{proof}
	
	Next, we will consider a structure with a defect as illustrated in \Cref{fig:edge}. For $\kappa_1, \kappa_2 \in \mathbb{C}$, we set 
	\begin{equation}\label{eq:k}
	\kappa_1^m = \begin{cases}\kappa_1, & m \leq 0, \\ \kappa_2, &m > 0,\end{cases} \qquad \kappa_2^m = \begin{cases}\kappa_2, & m \leq 0, \\ \kappa_1, & m > 0.\end{cases}
	\end{equation}
	Observe that under this assumption, the total structure is $\P$-symmetric, and consists of two half-space arrays as studied in \Cref{sec:periodic}. The intuition for studying this structure comes from \Cref{prop:zak}: in the general case, the Zak phases of the two periodic arrays, corresponding to the half-space arrays, have opposite sign.
	
	The special case $\kappa_1 = \overline{\kappa_2} := \kappa$ corresponds to a micro-structure that is $\P \T$-symmetric, \ie{} the unit cells of the half-space arrays are $\P\T$-symmetric. As we shall see, the behaviour is different in the case of unbroken $\P\T$ symmetry ($\kappa_1 = \overline{\kappa_2}$ with small $\mathrm{Im}(\kappa)$) compared to the case of broken $\P\T$-symmetry ($\kappa_1 = \overline{\kappa_2}$ with large $\mathrm{Im}(\kappa)$) or without $\P\T$ symmetry ($\kappa_1 \neq \overline{\kappa_2}$). In the case of unbroken $\P\T$ symmetry, the Zak phase vanishes to leading order, and we shall see that there are no localized modes in this case.
	
	We define
	$$U_1 = \sum_{m\leq 0} u_1^m e^{\iu \alpha mL}, \quad U_2 = \sum_{m>0} u_1^m e^{\iu \alpha mL}, \quad U_3 = \sum_{m\leq 0} u_2^m e^{\iu \alpha mL}, \qquad U_4 = \sum_{m>0} u_2^m e^{\iu \alpha mL}.$$
	\begin{lem} \label{lem:b}
		We have 
		$$ U_1 = bU_3, \quad U_4 = bU_2, $$
		for some $b\in \mathbb{C}$, independent of $\alpha$, satisfying $|b| < 1$.
	\end{lem}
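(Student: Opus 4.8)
The plan is to insert the defect profile \eqref{eq:k} into the identity \eqref{eq:main} and turn it into a constant-coefficient recurrence on each side of the interface, then read off $b$ from the unique decaying mode. With \eqref{eq:k} the right-hand side of \eqref{eq:main} becomes $\mu\left(U_1/\kappa_1+U_2/\kappa_2,\ U_3/\kappa_2+U_4/\kappa_1\right)^{\mathrm T}$, while the left-hand side is $\tfrac1\rho C^\alpha\left(U_1+U_2,\ U_3+U_4\right)^{\mathrm T}$. Because the array geometry is Hermitian, $C_{11}^\alpha=C_{22}^\alpha\in\R$ and the inter-cell coupling enters only through a degree-one trigonometric polynomial $C_{12}^\alpha=C_{12}^{(0)}+C_{12}^{(1)}e^{-\iu\alpha L}$ with real $C_{12}^{(0)},C_{12}^{(1)}$. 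Matching the coefficient of $z^m$, with $z=e^{\iu\alpha L}$, then converts \eqref{eq:main} into a three-term recurrence for the pair $(u_1^m,u_2^m)$, whose coefficients are constant on each half-array $m\le 0$ and $m>0$ and coincide with the periodic problems of \Cref{sec:periodic} for the bulk moduli $(\kappa_1,\kappa_2)$ and $(\kappa_2,\kappa_1)$, respectively.

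I would then solve the bulk recurrence with the ansatz $u_i^m=\xi_i\lambda^m$, which gives a homogeneous $2\times2$ system $M(\lambda)\xi=0$. After clearing $\lambda^{-1}$, $\det M(\lambda)=0$ is a quadratic whose two roots satisfy $\lambda_+\lambda_-=1$; this reciprocity is exact because $C_{12}^{(0)},C_{12}^{(1)}$ are real, so the extreme coefficients of the quadratic are complex conjugates. Since $u$ is a subwavelength localized mode, $\mu=\omega_0^2|D_1|$ lies in a band gap, so no root sits on $|\lambda|=1$; hence exactly one root $\lambda_>$ has $|\lambda_>|>1$ and $\lambda_<=1/\lambda_>$ has $|\lambda_<|<1$. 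Localization selects a single mode in each half: decay as $m\to-\infty$ forces the left half to be the one-dimensional $\lambda_>$-eigenspace, and decay as $m\to+\infty$ forces the right half to be the $\lambda_<$-eigenspace. The eigenvector ratios are constant in $m$, so $u_1^m=b_L u_2^m$ for $m\le 0$ and $u_2^m=b_R u_1^m$ for $m>0$, i.e. $U_1=b_LU_3$ and $U_4=b_RU_2$, with $b_L,b_R$ independent of $\alpha$ because the $u_i^m$ are.

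To show the two constants coincide I would compute the ratio from one row of $M$: on the left half $b_L=-B/P_{21}(\lambda_>)$, where $A=\tfrac1\rho C_{11}^\alpha-\mu/\kappa_1$, $B=\tfrac1\rho C_{11}^\alpha-\mu/\kappa_2$, and $P_{21}(\lambda)$ is the $(2,1)$-entry of $\rho M$; on the right half the moduli are swapped, giving $b_R=-B/P_{12}(\lambda_<)$, with $P_{12}(\lambda)$ the $(1,2)$-entry. Reality of the couplings gives $P_{12}(1/\lambda_>)=P_{21}(\lambda_>)$, and $\lambda_<=1/\lambda_>$ then yields $b_L=b_R=:b$. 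Equivalently, the reflection symmetry of the defect about the interface, which for a simple mode forces $u_1^m=\pm u_2^{1-m}$, transports the left-half ratio directly to the right half with the same value.

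The main obstacle is the bound $|b|<1$. From the two rows of $M(\lambda_>)$ one has $b=-P_{12}(\lambda_>)/A=-B/P_{21}(\lambda_>)$, so $|b|<1$ is equivalent to $|P_{12}(\lambda_>)|<|A|$ and, via the dispersion relation $AB=P_{12}(\lambda_>)P_{21}(\lambda_>)$, to $|P_{21}(\lambda_>)|>|B|$. I would establish this by eliminating $\lambda_>$ to obtain a quadratic for $b$ itself, of the form $C_{12}^{(0)}b^2+\rho(B-A)b-C_{12}^{(0)}=0$, whose two roots multiply to $-1$; hence exactly one lies in the open unit disk and one outside. The crux is to identify the decaying branch — the root attached to $\lambda_>$, for which the mode concentrates on the sublattice adjacent to the interface — as the interior root, using $|\lambda_>|>1$ together with the sign and reality of the geometric couplings. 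Matching this algebraic root to the geometric (decaying) branch is where the real work lies; the remaining steps are bookkeeping.
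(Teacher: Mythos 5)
Your overall strategy -- invert the Floquet transform to get a lattice eigenvalue problem, use the reflection symmetry of the defect to relate the two half-arrays, and use localization to select a single decaying branch -- is in the right spirit, and your observation that $b$ being independent of $\alpha$ amounts to the intra-cell ratio $u_1^n/u_2^n$ being independent of $n$ is exactly the right reduction. However, the technical engine of your proof does not apply here. You assume that $C_{11}^\alpha$ and $C_{12}^\alpha=C_{12}^{(0)}+C_{12}^{(1)}e^{-\iu\alpha L}$ encode only nearest-neighbour coupling, so that matching powers of $e^{\iu\alpha L}$ yields a three-term recurrence and a $2\times 2$ transfer matrix $M(\lambda)$. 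That is a tight-binding assumption which fails for the capacitance coefficients of this paper: $V_j^\alpha$ solves a quasiperiodic exterior Laplace problem coupling \emph{all} cells, so the Fourier coefficients $C_{ij}^m$ are generically nonzero for every $m\in\Z$, and the spatial operator is a full block-Laurent operator, not a block-tridiagonal one. Consequently there is no quadratic dispersion relation in $\lambda$, no reciprocal pair $\lambda_+\lambda_-=1$, no guarantee that the decaying solution space on each half-line is one-dimensional, and no quadratic for $b$ whose roots multiply to $-1$. Every subsequent step of your argument (constancy of the eigenvector ratio, $b_L=b_R$ via $P_{12}(1/\lambda_>)=P_{21}(\lambda_>)$, and the root-counting for $|b|<1$) leans on this finite-range structure.

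The paper avoids this by working with the doubly infinite operator $\K\C$ directly, where $\C$ is the Laurent operator with symbol $C^\alpha$: the simplicity of the eigenvalue $\rho\mu$, combined with the symmetry of the defect structure, forces the shifted/reflected eigenvector to be a scalar multiple of the original, which yields that $u_1^n/u_2^n$ is constant without ever needing the coupling to be short-ranged; the bound $|b|<1$ then follows from square-summability of the localized mode rather than from identifying an ``interior root.'' Note also that you yourself flag the branch-identification for $|b|<1$ as ``where the real work lies,'' so even granting the tight-binding reduction the proposal is incomplete at precisely the point where the inequality must be established. To salvage your approach you would either need to justify a controlled truncation of the long-range capacitance couplings (which the paper does not assume, e.g.\ no diluteness hypothesis is in force in \Cref{lem:b}) or replace the transfer-matrix step with an argument valid for full Laurent operators, which is essentially what the paper's proof does.
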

	\begin{proof}
		Observe first that, due to the $\P$-symmetry of the structure, we have $ U_1 = bU_3$ and  $U_4 = bU_2$ for some $b =b(\alpha)$ which might depend on $\alpha$. $b$ being constant in $\alpha$ is equivalent to 
		$$\frac{u_1^n}{u_2^n}$$
		being independent of $n$ for $n\leq 0$. To prove this, we will apply the inverse Floquet transform to the equation in \Cref{prop:loc_main}. We first define 
		$$C_{ij}^m = \frac{L}{2\pi} \int_{Y} C_{ij}^\alpha e^{\iu \alpha m L} \dx \alpha, \qquad m \in \Z.$$
		We then find from \eqref{eq:main} that
		\begin{align*}
		\frac{1}{\rho} \sum_{m \in \Z} C_{11}^{m-n}u_1^m + C_{12}^{m-n} u_2^m &= \frac{\mu}{\kappa_1^m} u_1^n , \\
		\frac{1}{\rho} \sum_{m \in \Z} C_{21}^{m-n}u_1^m + C_{22}^{m-n} u_2^m &= \frac{ \mu}{\kappa_2^m} u_2^n,
		\end{align*}
		for $n \in \Z$. We define 
		$$U^m = \begin{pmatrix} u_1^m \\ u_2^m\end{pmatrix}, \quad K = \begin{pmatrix} \kappa_1 & 0 \\ 0 & \kappa_2 \end{pmatrix}, \quad J = \begin{pmatrix} 0 & 1 \\ 1 & 0 \end{pmatrix}, \quad C^m= \begin{pmatrix} C_ {11}^m & C_ {12}^m \\ C_ {21}^m & C_ {22}^m \end{pmatrix}, $$
		and introduce the doubly infinite vectors and matrices
		$$\uf = \left(\begin{smallmatrix} \svdots \\ U^{-1} \\ U^0\\U^1\\U^2\\ \svdots\end{smallmatrix}\right),\quad \K = \left(\begin{smallmatrix} \sddots & \svdots & \svdots  & \svdots & \svdots & \sadots  \\
		\cdots & K & 0 & 0 & 0 & \cdots \\
		\cdots & 0 & K & 0 & 0 & \cdots \\
		\cdots & 0 & 0 & JK & 0 & \cdots \\
		\cdots & 0 & 0 & 0 & JK & \cdots \\
		\sadots & \svdots & \svdots  & \svdots & \svdots & \sddots \end{smallmatrix}\right), \quad \C = \left(\begin{smallmatrix} \sddots & \svdots & \svdots  & \svdots & \svdots & \sadots  \\
		\cdots & C^0 & C^1 & C^2& C^3  & \cdots \\
		\cdots & C^{-1} & C^0 & C^{1}& C^2  & \cdots \\
		\cdots & C^{-2} & C^{-1} & C^0 & C^1 & \cdots \\
		\cdots & C^{-3} & C^{-2} & C^{-1} & C^0 & \cdots \\
		\sadots & \svdots & \svdots  & \svdots & \svdots & \sddots \end{smallmatrix}\right).$$
		Observe that $\C$ is the Laurent operator whose symbol is the quasiperiodic capacitance matrix $C^\alpha$. Here, $\K$ and $\C$ are operators on $\ell_2^2(\Cb)$, which is defined as the space of square-summable sequences of vectors in $\Cb^2$. We then find that $(\rho\mu,\uf)$ is an eigenpair solution to the spectral problem
		$$\K \C \uf= \rho \mu \uf.$$
		Moreover, we see that $\tilde{\uf} := \left(\begin{smallmatrix} \cdots & U^{-2} & U^{-1}&U^2&U^3& \cdots\end{smallmatrix}\right)^\mathrm{T}$, where the superscript $\mathrm{T}$ denotes the transpose,  is an eigenvector corresponding to the same eigenvalue $\rho \mu$. Since $\rho \mu$ is a simple eigenvalue, it follows that
		$$\tilde\uf =  a \uf$$
		for some $a\in \Cb$. In other words, we have that $u_1^n/ u_2^n$ is constant for all  $n \leq 0$ and that $b= u_1^n/ u_2^n$ is constant in $\alpha$. Moreover, since $\uf$ is square-summable, it follows that $b < 1$ which proves the claim.
	\end{proof}
	The constant $b$ can be interpreted as the decay of the localized mode between two resonators. Using \Cref{lem:b}, we have 
	$$ \sum_{m\in \Z} u_1^me^{\iu \alpha mL} = U_2 + bU_3, \qquad \sum_{m\in \Z} u_2^me^{\iu \alpha mL} = bU_2 + U_3.$$
	Moreover, using \eqref{eq:k} we obtain 
	$$\sum_{m\in \Z} \frac{u_1^me^{\iu \alpha mL}}{\kappa_1^m} = \frac{U_2}{\kappa_2} + \frac{bU_3}{\kappa_1} , \qquad \sum_{m\in \Z} \frac{u_2^me^{\iu \alpha mL}}{\kappa_2^m} = \frac{bU_2}{\kappa_1} + \frac{U_3}{\kappa_2}.$$
	In total, \eqref{eq:main} reads
	$$
	\frac{1}{\rho}\begin{pmatrix} C_{11}^\alpha & C_{12}^\alpha \\[0.3em] C_{21}^\alpha & C_{22}^\alpha\end{pmatrix} \begin{pmatrix} 1 & b \\[0.3em] b & 1\end{pmatrix} \begin{pmatrix}U_2 \\[0.3em] U_3 \end{pmatrix} = \mu\begin{pmatrix} \ds \frac{1}{\kappa_2} & \ds \frac{b}{\kappa_1} \\[0.3em] \ds \frac{b}{\kappa_1} & \ds \frac{1}{\kappa_2}\end{pmatrix} \begin{pmatrix}U_2 \\[0.3em] U_3 \end{pmatrix},
	$$
	or, by defining
	$$A = \begin{pmatrix} 1 & b \\[0.3em] b & 1\end{pmatrix}, \quad B = \rho\begin{pmatrix} \ds \frac{1}{\kappa_2} & \ds \frac{b}{\kappa_1} \\[0.3em] \ds \frac{b}{\kappa_1} & \ds \frac{1}{\kappa_2}\end{pmatrix}, \quad v = \begin{pmatrix}U_2 \\[0.3em] U_3 \end{pmatrix},$$
	we arrive at
	$$B^{-1}C^\alpha Av = \mu v.$$
	We have therefore proven the following result.
	\begin{prop}
		Assume the array of resonators has a topological defect specified by \eqref{eq:k}. Then there is a localized mode in the subwavelength regime, corresponding to a simple eigenvalue frequency $\omega$, only if there is a $\mu\in \Cb$, independent of $\alpha$, such that $\mu$ is an eigenvalue of $B^{-1}C^\alpha A$ for all $\alpha \in Y^*$.
	\end{prop}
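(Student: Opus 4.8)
The plan is to assemble the two structural results already in place, \Cref{prop:loc_main} and \Cref{lem:b}, while tracking carefully which quantities depend on the quasimomentum $\alpha$. I would start from an arbitrary localized subwavelength mode $u$ associated with a simple eigenfrequency $\omega$. By \Cref{lem:constant} the Floquet transform reduces to the constants $u_i^m$, and \Cref{prop:loc_main} asserts that these satisfy \eqref{eq:main} for every $\alpha \in Y^*$, with the single scalar $\mu = \omega_0^2|D_1|$. The crucial point to emphasise is that $\mu$ is \emph{independent of $\alpha$}: it is fixed by the one frequency $\omega$, whereas the Floquet transform merely spreads the mode over the family indexed by $\alpha$.

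Next I would invoke \Cref{lem:b}, which (using the $\P$-symmetry of the defect configuration \eqref{eq:k} together with the simplicity of $\omega$) supplies a single constant $b$, independent of $\alpha$ and with $|b|<1$, such that $U_1 = bU_3$ and $U_4 = bU_2$. Substituting these relations into both slots of \eqref{eq:main}, and using the piecewise-constant bulk moduli \eqref{eq:k} to evaluate the right-hand side, collapses the bi-infinite sums into a closed $2\times 2$ eigenvalue relation for $v = (U_2, U_3)^{\mathrm{T}}$. With $A$ and $B$ as defined just before the statement, this is exactly $C^\alpha A v = \mu B v$, i.e. $B^{-1}C^\alpha A v = \mu v$. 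Here I would stress that $A$, $B$, and $\mu$ are all $\alpha$-independent — $A$ and $B$ because $b$, $\kappa_1$, $\kappa_2$, $\rho$ are — so that the only $\alpha$-dependence sits in $C^\alpha$ and in the eigenvector $v = v(\alpha)$.

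To finish, for each $\alpha$ at which $v(\alpha) \neq 0$ the displayed identity exhibits $\mu$ as an eigenvalue of $B^{-1}C^\alpha A$. One then needs that $v(\alpha)$ cannot vanish on a large set: if $U_2 \equiv U_3 \equiv 0$, then \Cref{lem:b} forces $U_1 = U_4 = 0$ as well and hence $u \equiv 0$, contradicting that $u$ is a genuine mode; since $U_2$ and $U_3$ are convergent Fourier series in $\alpha$, they then vanish at most on a discrete set. This is where I expect the only real subtlety to lie: upgrading ``$\mu$ is an eigenvalue for all $\alpha$ with $v(\alpha)\neq 0$'' to ``for all $\alpha$''. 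I would close the gap by analyticity, namely that $\det\!\big(B^{-1}C^\alpha A - \mu I\big)$ is analytic in $\alpha$ (as $C^\alpha$ is, being built from the quasiperiodic Green's function) and vanishes on a set with accumulation points, hence vanishes identically on $Y^*$. This yields the necessary condition stated in the proposition.
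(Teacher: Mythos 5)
Your argument is essentially the paper's own: the result is obtained by substituting the relations $U_1=bU_3$, $U_4=bU_2$ from \Cref{lem:b} into the equation of \Cref{prop:loc_main} and reading off the $\alpha$-independent eigenvalue relation $B^{-1}C^\alpha A v=\mu v$, exactly as you do. Your additional step ruling out the vanishing of $v(\alpha)$ via analyticity is a welcome piece of care that the paper passes over silently, but it does not change the route.
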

	
	The eigenvalues of $B^{-1}C^\alpha A$ are given by
	\begin{align*}
	\mu_j^\alpha(b) &= \frac{\kappa_1\kappa_2}{\rho(b^2\kappa_2^2 - \kappa_1^2) }\Bigg(C_{11}^\alpha\left(b^2\kappa_2-\kappa_1\right) + b\left(\kappa_2-\kappa_1\right)\Re(C_{12}^\alpha) \\
	&\qquad \qquad + (-1)^j\sqrt{\big(C_{11}^\alpha\left(b^2\kappa_2-\kappa_1\right) + b\left(\kappa_2-\kappa_1\right)\Re(C_{12}^\alpha)\big)^2-(b^2-1)(b^2\kappa_2^2-\kappa_1^2)\big((C_{11}^\alpha)^2 - |C_{12}^\alpha|^2\big)}\Bigg).
	\end{align*}
	Here, we choose a holomorphic branch of the square root around a neighbourhood of the curve $f(\alpha)$ for $\alpha \in (-\pi/L,\pi/L]$, where
	$$f(\alpha) = \big(C_{11}^\alpha\left(b^2\kappa_2-\kappa_1\right) + b\left(\kappa_2-\kappa_1\right)\Re(C_{12}^\alpha)\big)^2-(b^2-1)(b^2\kappa_2^2-\kappa_1^2)\big((C_{11}^\alpha)^2 - |C_{12}^\alpha|^2\big).$$
	We also define the curve $g(\alpha)$ as
	$$g(\alpha) = \left(C_{12}^\alpha\left(b^2\kappa_2-\kappa_1\right) + C_{11}^\alpha b\left(\kappa_2-\kappa_1\right)\right)^2.$$
	At points $\alpha$ where $C_{12}^\alpha$ is real, we have $f(\alpha) = g(\alpha)$, and therefore $\mu_j^\alpha = \mu_\pm^\alpha$,  where
	\begin{align*}
	\mu_\pm^\alpha= \frac{\kappa_1\kappa_2(b\pm1)}{\rho(b\kappa_2 \pm \kappa_1)}(C_{11}^\alpha \pm C_{12}^\alpha).
	\end{align*}
	In particular, the above formula holds at $\alpha = 0$ and at $\alpha = \pi/L$. Here, the sign depends on the branch of the square root. Since the capacitance coefficients $C_{ij}^\alpha$ depend on $\alpha$, the eigenvalue $\mu_j^\alpha$ can only be constant in $\alpha$ if the points $\alpha = 0$ and $\alpha = \pi/L$ correspond to opposite signs. In other words, the concatenation $\gamma$ of the curves defined by $f$ and by $g$ must have odd winding number around the origin, resulting in a swap of branches.
	
	At $\alpha = 0$ we have $C_{12}^0 = - C_{11}^0$, and so
	\begin{align*}
	\mu_-^0 = 
	\ds 2C_{11}^0\frac{\kappa_1\kappa_2(b-1)}{\rho(b\kappa_2 - \kappa_1)},\qquad \mu_+^0 = 0.
	\end{align*}
	Therefore, we always have $\mu_+^0 \neq \mu_-^{\pi/L}$. The condition $\mu_-^0 = \mu_+^{\pi/L}$ is equivalent to 
	$$\lambda_2 \frac{b-1}{b\kappa_2 - \kappa_1} = \lambda_1\frac{b+1}{b\kappa_2 + \kappa_1}.$$
	Here, $\lambda_2 = 2C_{11}^0$ and $\lambda_1 = C_{11}^{\pi/L} + C_{12}^{\pi/L}$, which was proved in \cite{ammari2020robust} to satisfy $0<\lambda_1 < \lambda_2$. From this we find two possible values for $b$:
	\begin{equation} \label{eq:b}
	b_\pm = \frac{1}{2}\left(l\left(1 - \frac{\kappa_1}{\kappa_2}\right) \pm \sqrt{l^2\left(1 - \frac{\kappa_1}{\kappa_2}\right)^2 + \frac{4\kappa_1}{\kappa_2}}\right), \qquad l = \frac{\lambda_2+\lambda_1}{\lambda_2-\lambda_1}.
	\end{equation}
	In the case when the micro-structure is $\P\T$-symmetric with unbroken $\P\T$-symmetry, \ie{}, if $\kappa_1 = \overline{\kappa_2} := \kappa$, with 
	$$|\mathrm{Im}(\kappa)| \leq \frac{\mathrm{Re}(\kappa)}{\sqrt{l^2-1}},$$
	the two values of $b$ have unit modulus: $|b_-| = |b_+| = 1$. In the case of broken $\P\T$-symmetry, \ie{}, $|\mathrm{Im}(\kappa)| > \frac{\mathrm{Re}(\kappa)}{\sqrt{l^2-1}}$, or in the case $\kappa_1 \neq \overline{\kappa_2}$, there will always be a value $b_0$ with $|b_0| < 1$ and a value $b_1$ with $|b_1| > 1$.
	
	We have now proven the following main result.
	\begin{thm} \label{thm:main}
		Assume the array of resonators has a defect in the material parameters specified by \eqref{eq:k}. Then,
		\begin{itemize}
			\item if $\kappa_1 = \overline{\kappa_2} := \kappa$ with $|\mathrm{Im}(\kappa)| \leq \frac{\mathrm{Re}(\kappa)}{\sqrt{l^2-1}}$ (unbroken $\P\T$-symmetry), the structure does not support simple localized modes in the subwavelength regime.
			\item if $\kappa_1 = \overline{\kappa_2} := \kappa$ with $|\mathrm{Im}(\kappa)| > \frac{\mathrm{Re}(\kappa)}{\sqrt{l^2-1}}$ (broken $\P\T$-symmetry) or if $\kappa_1 \neq \overline{\kappa_2}$ (no $\P\T$-symmetry), the frequency $\omega$ of a simple localized mode in the subwavelength regime must satisfy
			$$\omega = \sqrt{\frac{\mu_j^\alpha(b_0)}{|D_1|}} + O(\delta).$$
			Here, $b_0$ is the value of $b$ specified by \eqref{eq:b} satisfying $|b_0| < 1$. 
		\end{itemize}
	\end{thm}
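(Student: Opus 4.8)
The plan is to assemble the reductions established above into the stated dichotomy; the proof is essentially a matter of tracking which candidate decay constants $b$ are compatible with a genuine $\ell^2$-localized mode. First I would recall that, by \Cref{prop:loc_main} together with the reduction carried out via \Cref{lem:b}, any simple localized subwavelength mode for the defect \eqref{eq:k} forces $\mu = \omega_0^2|D_1|$ to be an eigenvalue of $B^{-1}C^\alpha A$ that is \emph{independent} of $\alpha$, where $b$ is the decay constant furnished by \Cref{lem:b} and necessarily satisfies $|b|<1$ by square-summability of the mode.

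The next step is to pin down which values of $b$ permit such an $\alpha$-independent eigenvalue. Since the coefficients $C_{ij}^\alpha$ genuinely depend on $\alpha$, the branch $\mu_j^\alpha(b)$ can only be constant if the two endpoints $\alpha=0$ and $\alpha=\pi/L$ are connected by a swap of the square-root branch; equivalently, the concatenated curve $\gamma$ must wind an odd number of times around the origin. Using $C_{12}^0=-C_{11}^0$ one has $\mu_+^0=0$, so the only consistent pairing is $\mu_-^0=\mu_+^{\pi/L}$. Solving this equation reduces to a quadratic whose roots are exactly the $b_\pm$ of \eqref{eq:b}, with $b_+b_-=-\kappa_1/\kappa_2$ and $b_++b_-=l(1-\kappa_1/\kappa_2)$.

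It then remains to carry out the modulus analysis of $b_\pm$, which is the heart of the argument. In the $\P\T$-symmetric case $\kappa_1=\overline{\kappa_2}=:\kappa$ one has $|b_+b_-|=|\kappa_1/\kappa_2|=1$; writing $\kappa_1/\kappa_2=e^{-2\iu\theta}$ with $\theta=\arg\kappa$, the discriminant factors as $4e^{-2\iu\theta}(1-l^2\sin^2\theta)$, so that $|b_\pm|=1$ precisely when $|\sin\theta|\le 1/l$, which is equivalent to $|\mathrm{Im}(\kappa)|\le \mathrm{Re}(\kappa)/\sqrt{l^2-1}$ (unbroken $\P\T$). In that regime neither candidate satisfies $|b|<1$, so—by the first step—no simple localized subwavelength mode can exist, giving the first bullet. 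In the complementary regime (broken $\P\T$, or $\kappa_1\neq\overline{\kappa_2}$) the same factorization gives two roots whose moduli are reciprocal, hence exactly one root $b_0$ satisfies $|b_0|<1$ while the other lies outside the unit disk. Feeding $b_0$ back into $\mu=\mu_j^\alpha(b_0)$ and using $\omega=\omega_0+O(\delta)$ with $\omega_0=\sqrt{\mu/|D_1|}$ from \eqref{eq:omega} and \eqref{eq:main} yields the claimed frequency formula, establishing the second bullet.

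The main obstacle is the modulus classification of $b_\pm$: establishing the exact threshold $|\mathrm{Im}(\kappa)|=\mathrm{Re}(\kappa)/\sqrt{l^2-1}$ and the ``both on the unit circle versus one-inside/one-outside'' dichotomy. This is precisely a $\P\T$-transition computation—tracking when the roots of the quadratic cross the unit circle as $\mathrm{Im}(\kappa)$ increases—and it is where the unbroken/broken distinction enters quantitatively. The branch-swap/odd-winding step also requires care, as it is the topological input that forces the pairing $\mu_-^0=\mu_+^{\pi/L}$ rather than any other combination of endpoint branches.
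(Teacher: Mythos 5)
Your proposal is correct and follows essentially the same route as the paper: reduce via \Cref{prop:loc_main} and \Cref{lem:b} to the requirement that an eigenvalue of $B^{-1}C^\alpha A$ be constant in $\alpha$, force the branch swap $\mu_-^0=\mu_+^{\pi/L}$ (since $\mu_+^0=0$), solve the resulting quadratic for $b_\pm$ as in \eqref{eq:b}, and classify the moduli via the factorization of the discriminant in the $\P\T$-symmetric case. One small caveat: your claim that the two roots have reciprocal moduli rests on $|b_+b_-|=|\kappa_1/\kappa_2|=1$ and hence only applies when $|\kappa_1|=|\kappa_2|$, so the one-inside/one-outside dichotomy in the general case $\kappa_1\neq\overline{\kappa_2}$ requires a separate justification --- though the paper itself merely asserts this point without proof.
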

	\begin{rmk}
		The second part of \Cref{thm:main} describes the possible frequency $\omega$ and the decay length $b$ of simple localized modes in the subwavelength regime. However, we have not proved the existence of such modes. To do this, the main remaining challenge is to prove that one eigenvalue $\mu_j(b_0)$ indeed is constant in $\alpha$. Analytically, this is obscured by the fact that the capacitance coefficients have a complicated dependency on $\alpha$. Numerically, however, the eigenvalues $\mu$ are straight-forward to compute (shown in \Cref{fig:munpt}), showing one constant eigenvalue.
	\end{rmk}
	
	\subsection{Numerical illustrations}
	The resonant frequencies and corresponding eigenmodes were computed using the multipole method, in the case of circular resonators. In \Cref{fig:bandpt,fig:bandnpt}, the defect frequencies, computed as in \Cref{thm:main}, are shown in the complex plane. The two points correspond to the two distinct defect structures that can be made with given values of the bulk moduli. To verify the existence of the edge mode, the resonant modes of a finite but large system were computed. \Cref{fig:mode} shows the localized mode in a truncated array with 48 resonators, with a topological defect as depicted in \Cref{fig:edge}. \Cref{fig:munpt} shows the eigenvalues of the matrix $B^{-1}C^\alpha A$ as functions of $\alpha$, clearly illustrating that one eigenvalue is constant. For comparison, \Cref{fig:model} shows the localized mode for small $\kappa_1, \kappa_2$, demonstrating a significantly less localized mode. In both cases, the relative discrepancy $e_\omega$ of the frequencies computed using \Cref{thm:main} and using the multipole discretization is around $e_\omega \approx 0.1\%$.
	
	In the case of small gain and loss, the localized mode disappears in the limit of balanced gain and loss, \ie{} when $\mathrm{Im}(\kappa_1) \rightarrow \mathrm{Im}(\kappa_2)$. In this limit, the magnitude of $b$ approaches unity and the rate of decay of the localized mode approaches zero.
	
	\begin{figure}[h]
	\begin{minipage}[b]{0.49\linewidth}
		\centering
		\includegraphics[width=0.95\linewidth]{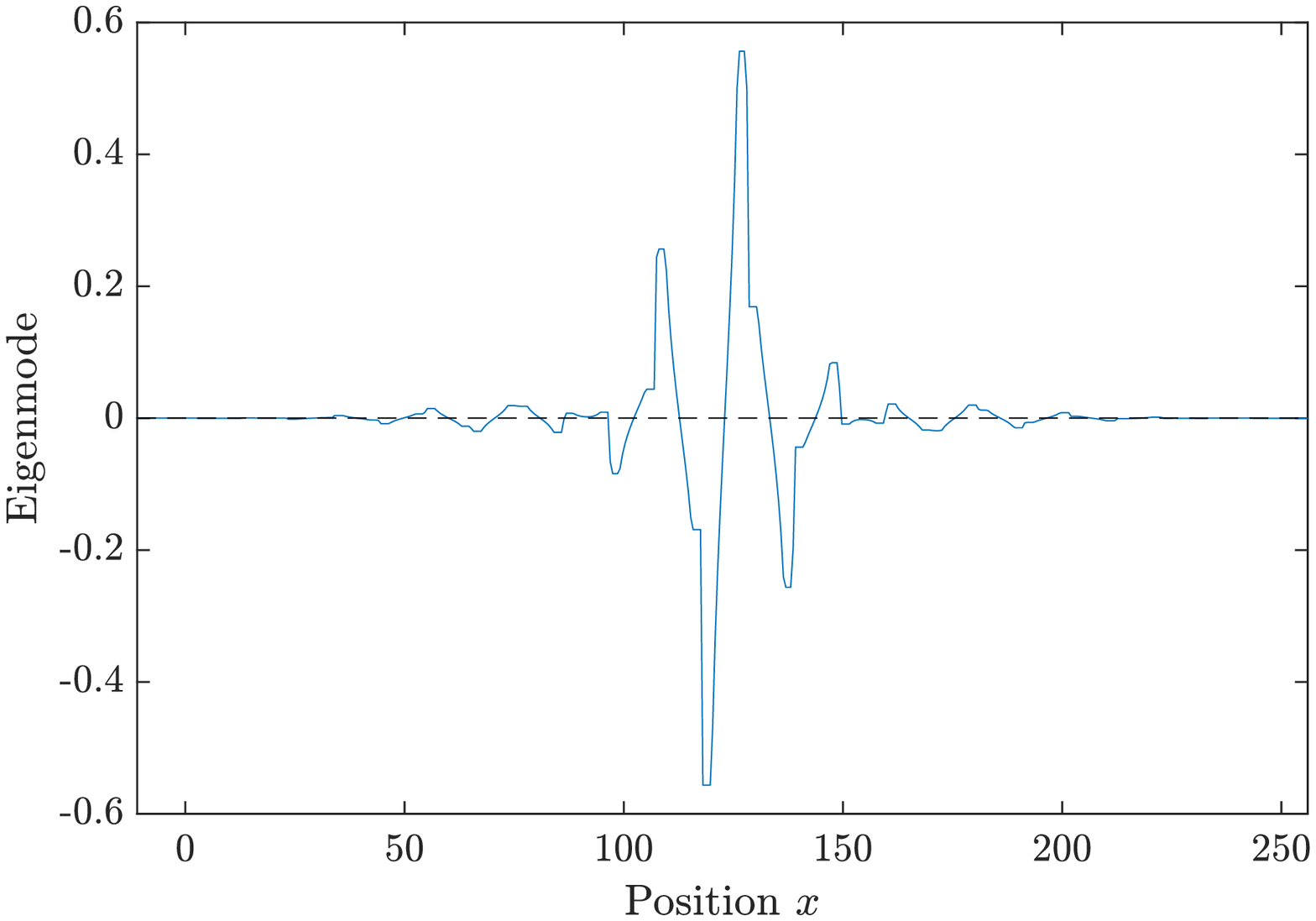}		
		\caption{Plot of the localized mode in a finite but large array of resonators, satisfying $|b| \approx 0.44$ and with relative error $e_\omega \approx 0.11\%$. Here, we use $\kappa_1 = 1+1.38\iu, \kappa_2 = 1-1.42\iu$. \\ \hspace{1pt}} \label{fig:mode}
	\end{minipage}
	\hspace{0.5cm}
	\begin{minipage}[b]{0.49\linewidth}
		\centering
		\includegraphics[width=0.95\linewidth]{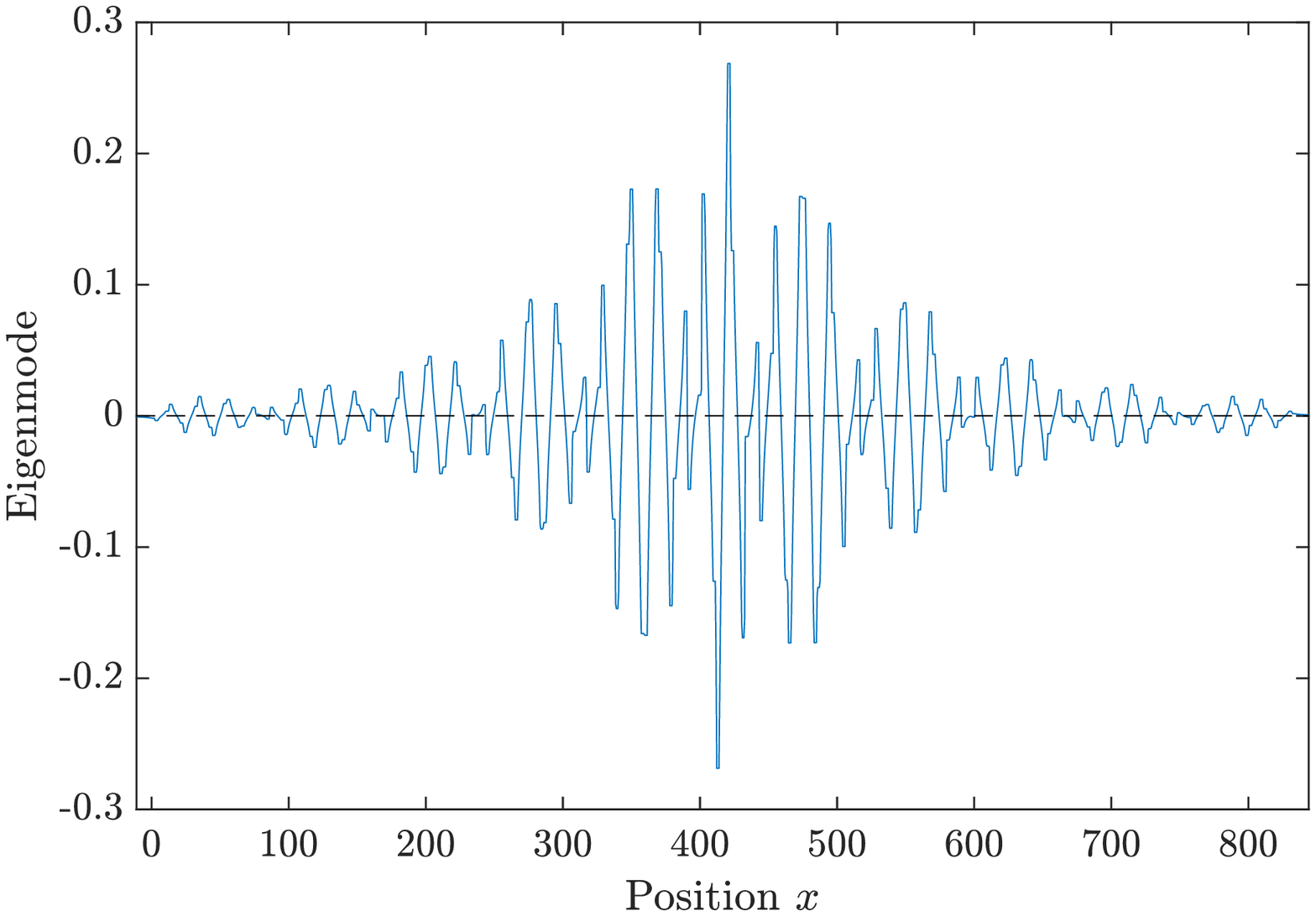}
	\caption{Plot of the localized mode in a finite but large array of resonators, satisfying $|b| \approx 0.88$ and with relative error $e_\omega \approx 0.09\%$. Observe the different $x$-axis scale compared to \Cref{fig:mode}. Here, we use $\kappa_1 = 1+0.8\iu, \kappa_2 = 1-0.6\iu$.} \label{fig:model}
	\end{minipage}
	\end{figure}

\begin{figure}[h]
	\centering
	\includegraphics[width=0.5\linewidth]{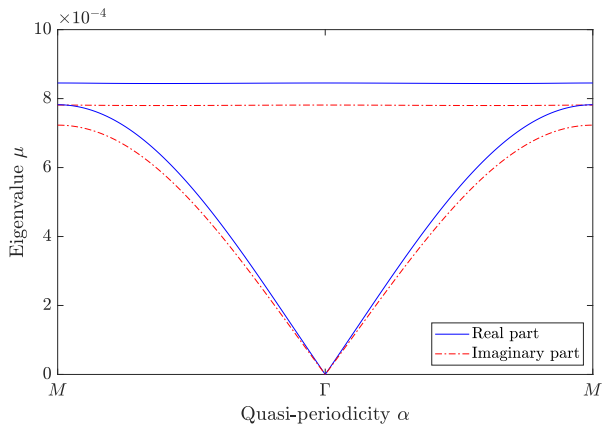}
	\caption{Real and imaginary parts of the eigenvalues $\mu$ of $B^{-1}C^\alpha A$, showing a flat band in $\alpha$. Here, we use $\kappa_1 = 1+1.38\iu, \kappa_2 = 1-1.42\iu$.} \label{fig:munpt}
\end{figure}
	
	\section{Localized modes by geometrical defects} \label{sec:geomd}
	In this section, we study a structure, again, created by joining two 
	half-structures with different Zak phases. Here, in contrast to the structure studied in \Cref{sec:first}, the difference is achieved by introducing a geometrical defect, depicted in \Cref{fig:finite}. Similar structures have been extensively studied in the case of tight-binding Hamiltonian systems (see, for example, \cite{drouot1,fefferman,fefferman_mms}). It is worth emphasizing  that the current structure is a non-Hermitian generalization of the structure studied in \cite{ammari2019topologically}. Here, for completeness, we will demonstrate numerically the edge modes in the non-Hermitian case.
	
	\subsection{Problem statement}
	We consider a dimerized array with a single, centre resonator.
	We let $d>0$ and $d'>0$ be the resonator separations and assume $d< d'$. Then we assume that the truncated array $D$ can be written
	\begin{equation} \label{finite_form}
	D = \left(\bigcup_{n=-M}^{M} D_0 + n(d+d',0,0)  \right) \bigcup \left( \bigcup_{n=-M+1}^M D_0 + n(d+d',0,0) - (d',0,0)\right),
	\end{equation}
	where $D_0$ is the centre resonator. In other words, $D$ consists of a single centre resonator surrounded by pairs of resonators. Moreover, $N=4M+1$ is the number of resonators.

	As before, we assume that all the resonators have the same density $\rho_b \in \R$. Moreover, the bulk moduli are given by $\kappa, \overline{\kappa} $ and $\kappa_0$ where $\kappa \in \mathbb{C}$ and $\kappa_0 = \mathrm{Re}(\kappa)$. We assume that the centre resonator $D_0$ has bulk modulus $\kappa_0 $, and that the remaining resonators have bulk moduli
	$$\kappa \text{ in } \begin{cases} D_0 + n(d+d',0,0), & n < 0, \\ D_0 + n(d+d',0,0) - (d',0,0), & n \geq 1, \end{cases} \text{ and } \overline{\kappa} \text{ in } \begin{cases} D_0 + n(d+d',0,0), & n > 0, \\ D_0 + n(d+d',0,0) - (d',0,0), & n < 1. \end{cases}$$
	This distribution is illustrated in \Cref{fig:finite}. We also assume that $\mathrm{Im}(\kappa)$ is chosen below the exceptional point of the corresponding infinite structure, so that the Zak phase $\phi_j^{\mathrm{zak}}$ is well-defined. 
	
	\begin{figure}[t]
		\centering
		\begin{tikzpicture}[scale=1.1]

		
		\begin{scope}
		\draw (0.65,0) coordinate (start1) circle (8pt);
		\draw[<->, opacity=0.5] (0.65,0) -- (2.05,0) node[pos=0.5, yshift=-7pt,]{$d'$};
		\draw[<->, opacity=0.5] (-0.75,0) -- (0.65,0) node[pos=0.5, yshift=-7pt,]{$d'$};
		\end{scope}
		
		\begin{scope}[xshift=-1.85cm]
		\draw[<->, opacity=0.5] (0.2,0) -- (1.1,0) node[pos=0.5, yshift=-7pt,]{$d$};
		\draw[pattern=crosshatch dots, opacity=0.3] (0.2,0) circle (8pt);
		\draw (0.2,0) circle (8pt);		
		\begin{scope}[xshift = 1.3cm]
		\draw[pattern=north east lines, opacity=0.3] (-0.2,0) circle (8pt);
		\draw (-0.2,0) circle (8pt);
		\end{scope};
		\end{scope};
		
		\begin{scope}[xshift=1.85cm]
		\draw[<->, opacity=0.5] (0.2,0) -- (1.1,0) node[pos=0.5, yshift=-7pt,]{$d$};
		\draw[pattern=crosshatch dots, opacity=0.3] (0.2,0) circle (8pt);
		\draw (0.2,0) circle (8pt);		
		\begin{scope}[xshift = 1.3cm]
		\draw[pattern=north east lines, opacity=0.3] (-0.2,0) circle (8pt);
		\draw (-0.2,0) circle (8pt);
		\end{scope};
		\end{scope};
		
		\begin{scope}[xshift=4.05cm]
		\draw[pattern=crosshatch dots, opacity=0.3] (0.2,0) circle (8pt);
		\draw (0.2,0) circle (8pt);		
		\begin{scope}[xshift = 1.3cm]
		\draw[pattern=north east lines, opacity=0.3] (-0.2,0) circle (8pt);
		\draw (-0.2,0) circle (8pt);
		\end{scope};
		\end{scope};
		
		\begin{scope}[xshift=6.25cm]
		\draw[pattern=crosshatch dots, opacity=0.3] (0.2,0) circle (8pt);
		\draw (0.2,0) circle (8pt);		
		\begin{scope}[xshift = 1.3cm]
		\draw[pattern=north east lines, opacity=0.3] (-0.2,0) circle (8pt);
		\draw (-0.2,0) circle (8pt);
		\end{scope};
		\end{scope};
		
		\begin{scope}[xshift=-4.05cm]
		\draw[pattern=crosshatch dots, opacity=0.3] (0.2,0) circle (8pt);
		\draw (0.2,0) circle (8pt);		
		\begin{scope}[xshift = 1.3cm]
		\draw[pattern=north east lines, opacity=0.3] (-0.2,0) circle 	(8pt);
		\draw (-0.2,0) circle (8pt);
		\end{scope};
		\end{scope};	
		
		\begin{scope}[xshift=-6.25cm]
		\draw[pattern=crosshatch dots, opacity=0.3] (0.2,0) circle (8pt);
		\draw (0.2,0) circle (8pt);		
		\begin{scope}[xshift = 1.3cm]
		\draw[pattern=north east lines, opacity=0.3] (-0.2,0) circle (8pt);
		\draw (-0.2,0) circle (8pt);
		\end{scope};
		\end{scope};
		
		\begin{scope}[xshift=-4.05cm]
		\draw [decorate,opacity=0.5,decoration={brace,amplitude=10pt},xshift=-4pt,yshift=0pt]
		(-0.25,0.5) -- (1.8,0.5) node [black,midway,xshift=-0.6cm]{};	
		\end{scope}
		
		\begin{scope}[xshift=-1.85cm]
		\draw [decorate,opacity=0.5,decoration={brace,amplitude=10pt},xshift=-4pt,yshift=0pt]
		(-0.25,0.5) -- (1.8,0.5) node [black,midway,xshift=-0.6cm]{};	
		\end{scope}
		
		\begin{scope}[xshift=0.72cm]
		\draw [decorate,opacity=0.5,decoration={brace,amplitude=10pt},xshift=-4pt,yshift=0pt]
		(-0.25,0.5) -- (1.8,0.5) node [black,midway,xshift=-0.6cm]{};	
		\end{scope}
		
		\begin{scope}[xshift=2.92cm]
		\draw [decorate,opacity=0.5,decoration={brace,amplitude=10pt},xshift=-4pt,yshift=0pt]
		(-0.25,0.5) -- (1.8,0.5) node [black,midway,xshift=-0.6cm]{};	
		\end{scope}
				
		\begin{scope}[xshift=-4.05cm-4pt]
		\draw[opacity=0.5]    (0.775,0.9) to[out=90,in=-100] (1.54,1.5);
		\end{scope}
		
		\begin{scope}[xshift=-1.85cm-4pt]
		\draw[opacity=0.5]    (0.775,0.9) to[out=100,in=-80] (-0.64,1.5);
		\end{scope}
		
		\begin{scope}[xshift=0.72cm-4pt]
		\draw[opacity=0.5]    (0.775,0.9) to[out=90,in=-100] (2.04,1.5);
		\end{scope}
		
		\begin{scope}[xshift=2.92cm-4pt]
		\draw[opacity=0.5]    (0.775,0.9) to[out=100,in=-80] (-0.14,1.5);
		\end{scope}
		
		\begin{scope}[xshift=-2.4cm]
		\draw [decorate,opacity=0.5,decoration={brace,amplitude=10pt,mirror},xshift=-4pt,yshift=0pt]
		(-1.125,1.9) -- (0.925,1.9) node [black,midway,xshift=-0.6cm]{};
		\begin{scope}[xshift=-4pt]
		\draw[opacity=0.5, dotted]    (-1.25,2) -- (-1.25,2.7);
		\draw[opacity=0.5, dotted]    (1.05,2) -- (1.05,2.7);
		\end{scope}
		\begin{scope}[xshift = -4pt-0.75cm]
		\draw[opacity=0.5] (0.2,2.3) circle (8pt);	
		\node[opacity=0.5] at (0.65,2.9) {$\varphi_j^{\mathrm{zak}} = 0$};
		\begin{scope}[xshift = 1.3cm]
		\draw[opacity=0.5] (-0.2,2.3) circle (8pt);
		\end{scope}
		\end{scope}
		\end{scope}
		
		\begin{scope}[xshift=2.87cm]
		\draw [decorate,opacity=0.5,decoration={brace,amplitude=10pt,mirror},xshift=-4pt,yshift=0pt]
		(-1.125,1.9) -- (0.925,1.9) node [black,midway,xshift=-0.6cm]{};
		\begin{scope}[xshift=-4pt]
		\draw[opacity=0.5, dotted]    (-1.25,2) -- (-1.25,2.7);
		\draw[opacity=0.5, dotted]    (1.05,2) -- (1.05,2.7);
		\end{scope}
		\begin{scope}[xshift = -4pt-0.75cm]
		\draw[opacity=0.5] (-0.05,2.3) circle (8pt);	
		\node[opacity=0.5] at (0.65,2.9) {$\varphi_j^{\mathrm{zak}} = \pi$};
		\begin{scope}[xshift = 1.3cm]
		\draw[opacity=0.5] (0.05,2.3) circle (8pt);
		\end{scope}
		\end{scope}
		\end{scope}
		\end{tikzpicture}
		\caption[Single]{Two-dimensional cross-section of a finite dimer chain with 13 resonators, heuristically showing how to identify unit cells with different Zak phases on either side of the edge.  Legend:
			\raisebox{-2pt}{\tikz{
				\draw[pattern=crosshatch dots, opacity=0.3] (0,0) circle (5pt); 
				\draw (0,0) circle (5pt);}} bulk modulus $\kappa$,
			\raisebox{-2pt}{\tikz{
					\draw[pattern=north east lines, opacity=0.3] (0,0) circle (5pt); 
					\draw (0,0) circle (5pt);}} bulk modulus $\overline{\kappa}$, 
			\raisebox{-2pt}{\tikz{\draw (0,0) circle (5pt);}} bulk modulus $\kappa_0$.} \label{fig:finite}
	\end{figure}
	
	\subsection{Numerical illustrations}
	The resonant frequencies and eigenmodes of the finite array with a geometrical defect, composed of 49 resonators, was numerically computed using the multipole method as described in \cite{ammari2019topologically}. \Cref{fig:modeg} shows the localized mode, roughly demonstrating a similar degree of localization as in \Cref{fig:mode}.

	\begin{figure}[h]
		\centering
		\includegraphics[width=0.5\linewidth]{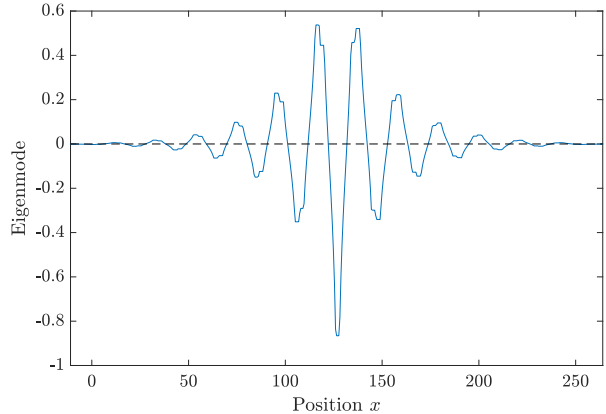}		
		\caption{Plot of the localized mode in the structure with a geometrical defect, in a finite but large array of resonators. Here, we use $\kappa_1 = 1-0.5\iu$ and $\kappa_2 = 1+0.5\iu$.} \label{fig:modeg}
	\end{figure}
	
	\section{Conclusions}
	In this work, we have demonstrated the existence of edge modes in an active system of subwavelength resonators with a defect only in the material parameters. We have linked the continuously varying Zak phase with partial band inversion, which provides a bulk-boundary correspondence for the non-Hermitian system without chiral symmetry. Moreover, we have explicitly computed the  frequency of localized edge modes, and, in accordance to the bulk-boundary correspondence, proved that no edge modes exist in the case of a micro-structure with unbroken $\P\T$-symmetry. 
	
	\bibliographystyle{abbrv}
	\bibliography{pt_topological_3}{}
	\end{document}